\newtheorem{thm}{Theorem}[section]
\newtheorem{lem}[thm]{Lemma}
\newtheorem{prop}[thm]{Proposition}
\theoremstyle{definition}
\newtheorem{defn}[thm]{Definition}
\theoremstyle{remark}
\newtheorem{rem}[thm]{Remark}
\newcommand{\R}{\mathbb R} %REALS
\newcommand{\E}{\mathbb E}
\newcommand{\tr}{\text{tr}}
\newcommand{\Tr}{\text{Tr}}
\renewcommand\Im{\operatorname{\mathfrak{Im}}}
\numberwithin{equation}{section}
\begin{document}
\title{Central Limit Theorem for Linear Statistics of Eigenvalues of Band 
Random Matrices}
%\maketitle

\author{Lingyun Li
\thanks{Department of Mathematics, University of California, Davis, 
One Shields Avenue, Davis, CA 95616-8633, llyli@math.ucdavis.edu}
\and
Alexander Soshnikov
\thanks{Department of Mathematics, University of California, Davis, 
One Shields Avenue, Davis, CA 95616-8633, soshniko@math.ucdavis.edu;
research has been supported in part by the NSF grant DMS-1007558}}
\maketitle

\begin{abstract}
We prove the Central Limit Theorem for linear statistics of the eigenvalues of band random matrices provided $\sqrt{n} \ll b_n \ll n$ 
and test functions are sufficiently smooth.
\end{abstract}
%%%%%%%%%%%%%%
\section{Introduction}
The goal of this paper is to prove the Central Limit Theorem for linear statistics of the eigenvalues of real symmetric band random matrices
with independent entries.  

First, we define a real symmetric band random matrix.
Let $\{b_n\}$ be a sequence of integers satisfying $0\leq b_n\leq n/2$ such that $b_n\to \infty$ as $n \to \infty.$
Define
\begin{align}
& d_n(j,k):=\min\{|k-j|,n-|k-j|\}, \\
& I_n:=\{(j,k): d_n(j,k)\leq b_n,\ j,k=1,...,n\}, \ \ \text{and} \ \ I_n^+:=\{(j,k): (j,k)\in I_n, \ j\leq k\}.
\end{align}
In particular, $d_n$ has the following natural interpretation: if the first $n$ positive integers are evenly spread out on a circle of radius 
$\frac{n}{2\*\pi}$,  
then $d_n(j,k)$ is the distance between the integers $j$ and $k$.

The quantity $b_n$ will be the $radius$ of a band of our random matrix.  In other words, 
all entries of the matrix with ${j,k} \notin I_n$ are going to be zero.
We define a real symmetric band random matrix 
\begin{equation}
M=(M_{jk}), \ 1\leq j,k\leq n,
\end{equation}
in such a way that for $j\leq k$ one has
\begin{equation}
\label{Mdef1}
M_{jk}=M_{kj}=b_n^{-1/2}\*W_{jk} \ \ \text{if} \ \ d_n(j,k)\leq b_n,
\end{equation}
and $M_{jk}=0$ otherwise, 
where $\{W_{jk}\}_{(j,k)\in I_n^+}$ is a sequence of independent real valued random variables satisfying
\begin{equation}
\mathbb{E}\{W_{jk}\}=0,\ \mathbb{E}\{W_{jk}^2\}=(1+\delta_{jk})
\sigma^2.
\label{2m}
\end{equation}
In general, the distribution of the entries $W_{jk}$ might depend on the size $n$ of the matrix but we will not indicate this dependence 
in our notations, unless it is necessary.
An important special case corresponds to $b_n=\lfloor(n-1)/2\rfloor.$  Then $M$ is standard Wigner random 
matrix (see e.g. \cite{Wig},  \cite{B}, \cite{BG}, \cite{AGZ}). 

For a real symmetric (Hermitian) matrix $M$ of order $n,$ its  
empirical distribution of the eigenvalues is defined as 
$\mu_M = \frac{1}{n} \sum_{i=1}^{n} \delta_{\lambda_i},$ where  $\lambda_1 \leq \ldots \leq \lambda_n$ are the (ordered) eigenvalues of $M.$
The Wigner semicircle law states that for any bounded continuous test function $\varphi: \R \to \R,$ the linear statistic 
\begin{equation}
 \frac{1}{n} \sum_{i=1}^n \varphi(\lambda_i) = \frac{1}{n} \*\Tr(\varphi(M))=:\tr_n(\varphi(M)) 
\end{equation}
converges to $\int \varphi(x) \* d \mu_{sc}(dx) $ in probability, where $\mu_{sc}$ is determined by its density

\begin{equation}
\label{polukrug}
\frac{d \mu_{sc}}{dx}(x) = \frac{1}{4 \pi \sigma^2} \sqrt{ 8 \sigma^2 - x^2} \mathbf{1}_{[-2\*\sqrt{2}\* \sigma , 2\*\sqrt{2}\* \sigma]}(x).
\end{equation}

We refer the reader to \cite{Wig},  \cite{B}, \cite{BG}, \cite{AGZ} for the proof in the full matrix case and to \cite{semicirclelawband2},
\cite{semicirclelawband1} for the proof in the band matrix case.

Band random matrices  have important applications in physics (see e.g. \cite{Selig}, \cite{Cas1}, \cite{Cas2}, \cite{Mirlin1}, \cite{Mirlin2}, 
\cite{Spen}), in particular as a model of quantum chaos.  It is conjectured that the eigenvectors are localized and local eigenvalue statistics
are Poisson for $b_n\ll\sqrt{n}.$  On the other hand, it is expected that the eigenvectors are delocalized and local eigenvalue statistics follow GUE 
(GOE) law for $b_n\gg\sqrt{n}$ ( see e.g. \cite{Mirlin1}).  Throughout the paper, the relation $a_n\ll b_n$ for two $n$-dependent quantities $a_n$ and 
$b_n$ means that $a_n/b_n\to0$ as $n\to\infty$. For recent mathematical progress on local spectral properties of band random matrices, 
we refer the reader to \cite{Yau1}, \cite{Yau2}, \cite{Yau3}, \cite{Schen}, \cite{Sodin}, \cite{Sodin1}.

The linear eigenvalues statistics corresponding to a test function $\varphi$ is defined as
\begin{equation}
\mathscr{N}_n[\varphi]=\sum_{l=1}^n\varphi(\lambda_l).
\end{equation}
In the Wigner (full matrix) case, the variance of $\mathscr{N}_n[\varphi]$ stays bounded as $n\to \infty$ for sufficiently smooth $\varphi.$  Moreover,
the fluctuation of the linear statistic is Gaussian in the limit (see e.g.  \cite{sashasinai1}, 
\cite{Bandmodel}, \cite{B.spectral}, \cite{P.CLT}, \cite{Shcherbina1}, and references therein). Similar results have been established for other 
ensembles of random matrices (\cite{Johansson1}, \cite{sasha4}, \cite{Shcherbina2},\cite{BS}). In addition, we note recent results on partial linear 
eigenvalue statistics (\cite{BPZ}, \cite{orourke}) and the properties of the eigenvectors of Wigner matrices (\cite{BaiPan}).

In this paper, we prove that the normalized linear statistic
\begin{equation}
\label{wings}
\mathscr{M}_n[\varphi]:=(b_n/n)^{1/2}\mathscr{N}_n[\varphi]
\end{equation}
has an asymptotic normal distribution, as $n\to\infty$ provided $b_n \gg \sqrt{n},$ and $\varphi,\ W_{jk}$ satisfy some conditions. 

%%%%%%%%%%%%%%
\section{Statement of Main Results}

For the first theorem, we assume that the matrix entries satisfy the Poincar\'e inequality.
We refer the reader to Section A of the Appendix for the definition and basic facts about the Poincar\'e inequality.
\begin{thm}
\label{thm-Poin}
Let $M=W/\sqrt{b_n}$ be a real symmetric random band matrix (\ref{Mdef1}-\ref{2m}), where 
$\{b_n\}$ is a sequence of integers satisfying $\sqrt{n}\ll b_n\ll n$. 
Assume the following:
\begin{enumerate}
\item Diagonal and non-zero off-diagonal entries of $W$ are two sets of i.i.d random variables;
\item  The marginal probability distribution 
of $W_{jk}$ satisfies the Poincar\'e Inequality with some uniform constant $m>0$ which does not depend on $n, j, k$;

\item 
The fourth moment of the non-zero off-diagonal entries does not depend on $n$:
\begin{equation}
\mu_4=\mathbb{E}\{W_{12}^4\}.
\end{equation}

\end{enumerate}
Let $\varphi:\mathbb{R}\to\mathbb{R}$ be a test function with continuous bounded derivative.
Then the corresponding centered normalized linear statistic of the eigenvalues
\begin{equation}
\label{linejnaya}
\mathscr{M}_n^{\circ}[\varphi]:=(b_n/n)^{1/2}\mathscr{N}_n^{\circ}[\varphi]=(b_n/n)^{1/2}(\mathscr{N}_n[\varphi]-\mathbb{E}\{\mathscr{N}_n[\varphi]\})
\end{equation}
converges in distribution to the Gaussian random variable with zero mean and the variance
\begin{eqnarray}
\label{varvar}
Var_{band}[\varphi]&=&\int_{-2\sqrt{2}\sigma}^{2\sqrt{2}\sigma}
\int_{-2\sqrt{2}\sigma}^{2\sqrt{2}\sigma}\int_{-2\sqrt{2}\sigma}^{2\sqrt{2}\sigma}
\*\frac{(\varphi(x)-\varphi(\lambda))\varphi'(y)\*\sqrt{8\sigma^2-x^2}\*\sqrt{8\sigma^2-y^2}}{4\pi^4(x-\lambda)
\*\sqrt{8\sigma^2-\lambda^2}}\*F_{\sigma}(x,y)\* 1_{\{x\not=y\}}\*dx\*dy\*d\lambda\nonumber\\
&&+\frac{\kappa_4}{16\pi^2\sigma^8}\left(\int_{-2\sqrt{2}\sigma}^{2\sqrt{2}\sigma}
\frac{\varphi(\lambda)(4\sigma^2-\lambda^2)}{\sqrt{8\sigma^2-\lambda^2}}d\lambda\right)^2,\label{varb} 
\end{eqnarray}
where for $x\not=y$
\begin{equation}
F_{\sigma}(x,y):=\int_{-\infty}^{\infty}
\frac{\left(s^3\*\sin s-s\*\sin^3 s\right)ds}{2\sigma^2\left(s^2-\sin^2 s\right)^2-\left(s^3\sin s+s\sin^3 s\right)xy+s^2\sin^2 s(x^2+y^2)},
\end{equation}
and $\kappa_4$ is the fourth cumulant of off-diagonal entries, i.e.
\begin{equation}
\kappa_4=\mu_4-3\sigma^4.
\end{equation}
\end{thm}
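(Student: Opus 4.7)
The plan is to reduce the CLT for $\mathscr{M}_n^\circ[\varphi]$ to a joint CLT for the centered and normalized resolvent traces
$\xi_n(z) := (b_n/n)^{1/2}\bigl(\Tr G(z) - \E\{\Tr G(z)\}\bigr)$, where $G(z) = (M-zI)^{-1}$ and $\Im z > 0$, and then to identify the limiting Gaussian field explicitly. The passage from a CLT for $\xi_n$ to a CLT for general $\varphi \in C^1_b(\R)$ is handled by the Helffer-Sjöstrand formula: fixing a smooth almost-analytic extension $\tilde\varphi$ of $\varphi$, one writes
\[
\mathscr{N}_n^\circ[\varphi] = -\frac{1}{\pi}\int_{\C} \d_{\bar z}\tilde\varphi(z)\,\bigl(\Tr G(z) - \E\{\Tr G(z)\}\bigr)\,dA(z),
\]
so that a CLT for $\xi_n$ with uniform enough control as $\Im z \downarrow 0$ yields the CLT for the linear statistic.

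The first technical step is to establish the correct variance bound
$\V\bigl[(b_n/n)^{1/2}\Tr G(z)\bigr] = O\bigl((\Im z)^{-4}\bigr)$
uniformly in $n$, by applying the Poincar\'e inequality to the Lipschitz function $W \mapsto (b_n/n)^{1/2}\Tr G(z)$. The point is that $\d_{W_{jk}}\Tr G(z) = -b_n^{-1/2}(1+\delta_{jk})(G^2)_{jk}$, and the sum of squares of these derivatives is restricted by the band structure to pairs with $d_n(j,k)\leq b_n$; this together with $\sum_{j,k}|(G^2)_{jk}|^2 \leq n(\Im z)^{-4}$ gives exactly the normalization $(b_n/n)^{1/2}$ that is announced in (\ref{wings}). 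These bounds in turn permit truncation of $\Im z$ in the Helffer-Sjöstrand integral down to $n^{-\alpha}$ with negligible error, given that $\varphi \in C^1_b$.

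The heart of the argument is the joint CLT for $\xi_n(z_1),\ldots,\xi_n(z_r)$. Using the integration-by-parts identity available under the Poincar\'e assumption (or equivalently a cumulant expansion of order four, truncated by means of concentration), one shows that all joint cumulants of order $\geq 3$ vanish in the limit by standard power counting that exploits the band structure, while the covariance $\Cov(\xi_n(z_1),\xi_n(z_2))$ converges to an explicit function of $(z_1,z_2)$. The leading contribution has the form
\[
\Cov\bigl(\Tr G(z_1),\Tr G(z_2)\bigr) \approx \frac{2\sigma^2}{b_n}\sum_{(j,k)\in I_n} \bigl(\E\{G_{jk}(z_1)G_{jk}(z_2)\}\bigr)^2 + \kappa_4 \cdot (\text{rank-one term}) + \text{l.o.t.}
\]
The main obstacle is to compute the asymptotics of the two-point function $\E\{G_{jk}(z_1)G_{jk}(z_2)\}$ as a function of the scaled displacement $d_n(j,k)/b_n$. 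Because the nonzero pattern of $M$ is translation invariant on $\Z/n\Z$, this is a convolution equation that diagonalizes under the Fourier transform; rescaling the dual variable by $b_n$ and sending $n\to\infty$ with $b_n\ll n$ transforms the equation into an algebraic $2\times 2$ system in the continuous Fourier variable $s\in\R$. The determinant of this system is precisely the denominator $2\sigma^2(s^2-\sin^2 s)^2 - (s^3\sin s + s\sin^3 s)xy + s^2\sin^2 s(x^2+y^2)$ in $F_\sigma(x,y)$, and the numerator $s^3\sin s - s\sin^3 s$ is the derivative of this expression that appears when one differentiates the covariance equation in the spectral parameter, in the spirit of Pastur's formula for the variance of linear statistics. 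Inverting the Helffer-Sjöstrand representation of $\varphi$ turns the $(z_1,z_2)$-integral against the limiting covariance into the triple integral in $(x,y,\lambda)$ appearing in (\ref{varvar}).

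Finally, the fourth-cumulant contribution is comparatively routine: the single-entry term in the integration-by-parts expansion produces a rank-one correction whose contribution to the covariance of $\Tr G(z_1), \Tr G(z_2)$ factorizes, and Helffer-Sjöstrand inversion turns it into the squared integral
$\bigl(\int \varphi(\lambda)(4\sigma^2-\lambda^2)/\sqrt{8\sigma^2-\lambda^2}\,d\lambda\bigr)^2$ with the prefactor $\kappa_4/(16\pi^2\sigma^8)$. The principal analytical difficulties are (i) the Fourier-mode computation on the band that gives rise to $F_\sigma$, since one needs uniform control of the two-point function over all scales of $d_n(j,k)$ up to $b_n$ and for $\Im z_j$ polynomially small in $n$, and (ii) keeping the error from the Poincar\'e-based truncation smaller than the $(b_n/n)^{1/2}$ signal throughout the Helffer-Sjöstrand integration.
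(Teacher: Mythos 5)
Your plan is a genuinely different route from the paper (which works in the ``time domain'' with $U(t)=e^{itM}$, derives a Lytova--Pastur/Stein-type integral equation for $Y_n(x,t)$, and obtains $F_\sigma$ from the band-restricted bilinear form $\langle f,g\rangle_n$ via Dyck-path counting and Chebyshev-polynomial diagonalization), but as written it has two genuine gaps, both at the places where the actual work of the theorem lies. First, the reduction via Helffer--Sj\"ostrand for $\varphi\in C^1_b$ does not go through with the inputs you establish. Your Poincar\'e bound gives $\V\{(b_n/n)^{1/2}\Tr G(z)\}=O((\Im z)^{-4})$, i.e.\ a covariance bound of order $(\Im z_1)^{-2}(\Im z_2)^{-2}$; with only one derivative of $\varphi$ the almost-analytic extension gives $|\partial_{\bar z}\tilde\varphi|=O(1)$ near the real axis, so the contribution of the strip $|\Im z|\le\eta$ to the variance of the linear statistic is controlled only by a divergent integral $\sim\int_{|y|\le\eta}y^{-2}dy$. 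Hence the claim that one can truncate at $\Im z= n^{-\alpha}$ ``with negligible error, given that $\varphi\in C^1_b$'' fails as stated: you would need either substantially more smoothness (the paper first assumes $\int(1+|t|^{4+\varepsilon})|\hat\varphi(t)|dt<\infty$ and only afterwards passes to $C^1_b$ by approximation, using the global variance bound (\ref{pi1}) and a spectral-localization estimate), or genuinely stronger, local-law-type control of the covariance of resolvent traces near the axis, which your proposal does not supply.

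Second, the identification of the limiting covariance --- the content that produces $F_\sigma$ and distinguishes the band case from the Wigner case (cf.\ the paper's Remark 2.3 correcting \cite{Khorunzhy}) --- is only asserted. The displayed ansatz $\frac{2\sigma^2}{b_n}\sum_{(j,k)\in I_n}(\E\{G_{jk}(z_1)G_{jk}(z_2)\})^2$ is not the correct leading structure (the ladder/self-consistent expansion produces sums of products of two-point functions, not squares of $\E\{G_{jk}G_{jk}\}$), and the key step --- that the band-restricted two-point function satisfies a convolution equation which, after rescaling the dual variable by $b_n$, closes into an algebraic system whose determinant is the denominator of $F_\sigma$ --- is exactly the hard part: one must derive and stabilize that self-consistent equation uniformly over all displacements $d_n(j,k)\le b_n$ and uniformly in the spectral parameters needed for the Helffer--Sj\"ostrand inversion, and then justify the passage to the triple integral in (\ref{varvar}). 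You flag this as ``the principal analytical difficulty,'' but flagging it does not close it; in the paper this step is done at fixed $t$ by an explicit combinatorial computation (Dyck paths with the extra constraint $s(l)=k$, yielding the weights $\gamma_k=\P(|T_1+\cdots+T_k|\le 1/2)$) together with the orthogonality $\langle U_n,U_m\rangle=\delta_{nm}\gamma_n$, which avoids any small-$\Im z$ issues entirely. Until these two points are supplied, the proposal is a plausible program rather than a proof; the $\kappa_4$ term and the vanishing of higher cumulants are indeed the easier parts.
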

Next, we extend this result to the non-i.i.d. case when the fifth moment of the matrix entries is uniformly bounded.  Here we do not assume that 
marginal 
distributions of the non-zero entries satisfy the Poincar\'e inequality. For technical reasons, we assume that the 
fourth cumulant of the matrix entries is zero.  Also we require that $\sqrt{n}\ln n\ll b_n$ 
(thus, we have additional $\ln n$ factor at the l.h.s. as compared to the corresponding assumption in Theorem \ref{thm-Poin}).

\begin{thm}
\label{thm:0cumulant}
Let $M=W/\sqrt{b_n}$ be a real symmetric band matrix (\ref{Mdef1}-\ref{2m}), where
$\{b_n\}$ is a sequence of positive integers satisfying $\sqrt{n}\ln n\ll b_n\ll n$. 
Assume the following:
\begin{enumerate}
\item 
\begin{equation}\sigma_5:=\sup_{n\in\mathbb{N}}\max_{(j,k)\in I_n}\mathbb{E}\{|W_{jk}^{(n)}|^5\}<\infty.\end{equation}
\item The third cumulant of the non-zero off-diagonal entries does not depend on $j,k$: 
$$\kappa_3=\kappa_{3,jk}, \ \ (j,k)\in I_n, \ j\neq k.$$
\item The fourth cumulant is zero: $\kappa_4=\mathbb{E}\{(W_{jk}^{(n)})^4\}-3\sigma^4=0$.
\end{enumerate}
Let $\varphi:\mathbb{R}\to\mathbb{R}$ be a test function with the Fourier transform
\begin{equation}
\hat{\varphi}(t)=\frac{1}{2\pi}\int_{-\infty}^{\infty} e^{-it\lambda}\varphi(\lambda)d\lambda
\label{def:fouriertransform}
\end{equation}
satisfying
\begin{equation}
\int_{-\infty}^{\infty} (1+|t|^{4})|\hat{\varphi}(t)|dt<\infty.
\label{ineq:fourier1}
\end{equation}
Then the corresponding centered normalized linear eigenvalues statistic $\mathscr{M}_n^{\circ}[\varphi]$ converges in distribution to the Gaussian 
random variable with zero mean and variance $Var_{G}$ 
\begin{equation}
\label{Gausvar}
Var_{G}[\varphi]=\int_{-2\sqrt{2}\sigma}^{2\sqrt{2}\sigma}
\int_{-2\sqrt{2}\sigma}^{2\sqrt{2}\sigma}\int_{-2\sqrt{2}\sigma}^{2\sqrt{2}\sigma}
\*\frac{(\varphi(x)-\varphi(\lambda))\varphi'(y)\*\sqrt{8\sigma^2-x^2}\sqrt{8\sigma^2-y^2}}{4\pi^4(x-\lambda)
\*\sqrt{8\sigma^2-\lambda^2}}F_{\sigma}(x,y)\* 1_{\{x\not=y\}} \*dx\*dy \*d\lambda.
\end{equation}
\end{thm}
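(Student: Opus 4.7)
My plan is to adapt the characteristic-function / cumulant-expansion machinery that underlies Theorem~\ref{thm-Poin}, using the Fourier inversion $\varphi(\lambda) = \int \hat{\varphi}(t)\, e^{it\lambda}\, dt$ to reduce the statement to a central limit theorem for the trace of the unitary family
\begin{equation}
U_n(t) := e^{itM},\qquad Y_n(t) := (b_n/n)^{1/2}\bigl(\mathrm{Tr}\, U_n(t) - \mathbb{E}\,\mathrm{Tr}\, U_n(t)\bigr),
\end{equation}
so that $\mathscr{M}_n^{\circ}[\varphi] = \int \hat{\varphi}(t)\, Y_n(t)\, dt$. The main two tasks are then: (i) establish tightness and Gaussian convergence of the process $Y_n(\cdot)$ at each fixed $t$; (ii) verify that the covariance $C(s,t) := \lim_n \mathrm{Cov}(Y_n(s), Y_n(t))$ reproduces (\ref{Gausvar}) after Fourier inversion in both arguments.

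Since the Poincar\'e inequality is unavailable, all variance and moment bounds that used spectral concentration in the proof of Theorem~\ref{thm-Poin} must be replaced. The natural substitute is the approximate Stein / generalized cumulant expansion: for a smooth functional $F$ of the independent variables $\{W_{jk}\}_{(j,k)\in I_n^+}$,
\begin{equation}
\mathbb{E}\{W_{jk}\, F(W)\} = \sum_{p=0}^{3} \frac{\kappa_{p+1}(W_{jk})}{p!}\, \mathbb{E}\{\partial_{jk}^{p} F(W)\} + \varepsilon_{jk}(F),
\end{equation}
where $|\varepsilon_{jk}(F)|$ is bounded by $\mathbb{E}|W_{jk}|^5$ times the sup-norm of $\partial_{jk}^{4} F$. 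Applying this identity to $F=(U_n(t))_{jk}$ and to products of entries of $U_n(s), U_n(t)$ yields closed differential/integral equations, in the variable $t$, for $n^{-1}\mathbb{E}\,\mathrm{Tr}\, U_n(t)$ (reproducing the semicircle density) and for the covariance of $Y_n(s)$ with $Y_n(t)$. The assumption $\kappa_4=0$ enters precisely at the step where a cumulant-of-order-four contribution would otherwise appear in the covariance formula: it is exactly this term that produces the second summand in (\ref{varvar}), and setting $\kappa_4=0$ collapses $Var_{band}$ to $Var_{G}$. The bound $\sigma_5<\infty$ keeps the remainders $\varepsilon_{jk}$ negligible, while the strengthened band-width assumption $\sqrt{n}\ln n \ll b_n$ arises because, without Poincar\'e, one controls $\sup_{(j,k)\in I_n^{+}}|(\partial_{jk}^{p}U_n(t))_{ij}|$ only up to a logarithmic factor coming from a union bound over the $O(n b_n)$ band entries.

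To complete the CLT, I would show by induction on the order that every joint cumulant of order $r\ge 3$ of $(Y_n(t_1),\dots,Y_n(t_k))$ tends to zero, again via iterated cumulant expansion, so that the finite-dimensional distributions of $Y_n(\cdot)$ are asymptotically Gaussian. The integrability $(1+|t|^4)|\hat{\varphi}(t)|\in L^1$ then supplies the dominating function required to pass to the integral: the unitary derivatives $\partial_t U_n = iM U_n$ each contribute at most a factor of $|t|$ inside the expansion, and four such factors are produced by the Stein remainder and by the two differentiations of the covariance identity, so $\mathbb{E}|Y_n(t)|^2$ grows at most like $|t|^4$. Dominated convergence then transfers the Gaussian limit from the integrand $Y_n(t)$ to $\mathscr{M}_n^{\circ}[\varphi]=\int \hat{\varphi}(t) Y_n(t)\, dt$. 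The main technical obstacle, I expect, is the uniform-in-$t$ variance and covariance bound for $Y_n(t)$: without a spectral gap, deriving sharp polynomial estimates in $|t|$ requires delicate bookkeeping of the band structure and careful accounting of the remainders $\varepsilon_{jk}$, and this is where the extra $\ln n$ factor in the bandwidth hypothesis is consumed.
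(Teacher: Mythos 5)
Your plan is a genuinely different route from the paper's, and as written it has a real gap. The paper does \emph{not} re-run the Stein-method/self-consistent-equation argument of Theorem \ref{thm-Poin} without the Poincar\'e inequality. Instead it exploits the fact that, once $\kappa_4=0$, the limiting variance (\ref{Gausvar}) is distribution-independent (it does not involve $\kappa_3$ either), so a Lindeberg-type interpolation suffices: one sets $M(s)=s^{1/2}M+(1-s)^{1/2}\hat M$ with $\hat M$ a Gaussian band matrix (Gaussian laws satisfy the Poincar\'e inequality, so Theorem \ref{thm-Poin} already gives the CLT for $\hat M$), writes $R_n(x)=\int_0^1\partial_s\mathbb{E}\{e_n(s,x)\}\,ds$, and applies the decoupling formula with $p=3$ to show $R_n(x)\to0$. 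The second-cumulant terms cancel exactly between $M$ and $\hat M$, the fourth-cumulant off-diagonal terms vanish by hypothesis, the remainder is controlled by $\sigma_5$, and the only delicate contribution is the third-cumulant term containing $\sum_{(j,k)\in I_n}U_{jj}(s,t_1)U_{jk}(s,t_2)U_{kk}(s,t_3)$. This is where the paper's key new lemma enters: the band truncation of a unitary matrix satisfies $\|U^{(band)}\|\le C\ln b_n$ (via the Mathias bound on triangular truncation), which upgrades the naive bound $O(n\sqrt{b_n})$ on that sum to $O(n\ln b_n)$ and is precisely what consumes the extra logarithm in the hypothesis $\sqrt{n}\ln n\ll b_n$.

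The concrete gaps in your proposal are the following. First, the cumulant expansion by itself only produces equations for \emph{expectations}; in the proof of Theorem \ref{thm-Poin} every variance bound --- the a priori bounds giving boundedness and equicontinuity of $Y_n$, the estimates showing $T_{11}$, $T_{12}'$, $T_{13}'$, the $T_2$ and $T_3$ fluctuation terms are negligible, and the factorization $\mathbb{E}\{u_n^\circ e_n\}\approx$ (closed equation) --- comes from the Poincar\'e inequality, which is exactly what is unavailable here. You acknowledge this is ``the main technical obstacle'' but offer no mechanism to replace it (martingale differences, resolvent concentration, or, as the paper does, bypassing the issue entirely by comparison with the Gaussian ensemble), so your direct route cannot be closed as described. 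Second, your explanation of the $\ln n$ in the bandwidth hypothesis (a union bound over the $O(nb_n)$ band entries to control $\sup_{j,k}|\partial^p_{jk}U|$) is not where the logarithm comes from and is not needed: the derivative bounds $|D_{jk}^{\,l}U_{ab}(t)|\le c_l|t|^l$ are deterministic. The logarithm is consumed by the operator-norm bound on the band-restricted unitary matrix in the third-cumulant estimate, an ingredient absent from your outline. Your observation that $\kappa_4=0$ removes the second summand of (\ref{varvar}) and collapses $Var_{band}$ to $Var_G$ is correct, but the proof strategy built around it needs either the missing concentration input or the interpolation argument the paper actually uses.
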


\begin{rem}
{\it It should be noted that in \cite{Khorunzhy} the authors claimed to compute the asymptotic formula for the variance of the trace of the resolvent
of a band random matrix (see the formulas (3.5)-(3.7) therein).  In particular, they claim that the (normalized) variance has the same 
limiting expression as 
in the GOE case. We disagree with this statement.  In fact, it is not hard to see that the limit of $\frac{b_n}{n} Var Tr M^3$ is different 
in the band and the full Wigner matrix cases.

In our computations, the difference is highlighted by the fact that the limiting behavior of the expression (\ref{An}) is different in the band 
and the full matrix cases.  In the full Wigner case, the formula for the 
limit of $A_n(t)$ immediately follows from the Wigner semicircle law.  In the band case, 
the fact 
that the summation in (\ref{An}) is restricted to $(j,k): d_n(j,k)\leq b_n$ leads to a different limit formula (see Subsection \ref{an}, in particular 
Proposition \ref{propan} and Lemma \ref{lemma35}.}
\end{rem}

\begin{rem}
{\it Similar results with little modification hold for Hermitian band random matrices.  In particular, the variance (\ref{Gausvar}) in Theorem 
\ref{thm:0cumulant} gets an additional 
factor $1/2$, provided  (\ref{2m}) is replaced by
\begin{equation}
\mathbb{E}\{W_{jk}\}=0,\ \mathbb{E}\{|W_{jk}|^2\}=(1+\delta_{jk})\*\sigma^2,
\ \ \mathbb{E}\{W_{jk}^2\}=0.
\label{2mm}
\end{equation}

In addition, it should be noted that the results of Theorems \ref{thm-Poin} and \ref{thm:0cumulant} hold if one replaces the condition 
$M_{jk}=0$ for $d_n(j,k)>b_n$ by  $M_{jk}=0$ for $|j-k|>b_n.$

The proofs are very similar and left to the reader.}
\end{rem}

The rest of the paper is organized as follows.  We prove Theorem \ref{thm-Poin} in Section 3 and Theorem \ref{thm:0cumulant} in Section 4.  In the 
Appendix, we list basic facts about the Poincar\'e inequality and decoupling formula.

%%%%%%%%%%%%%%%%%

\section{Proof of Theorem 2.1}

\subsection{Stein's Method}
We follow the approach used by A. Lytova and L. Pastur in \cite{P.CLT} in the full matrix (Wigner) case. 
Essentially, it is a modification of the Stein's method (\cite{Stein}, \cite{BarChen}).
While several steps of our proof are similar to the ones
 in \cite{P.CLT}, 
the fact that we are dealing with band matrices raises new significant difficulties 
(see e.g. Lemmas \ref{lemma34} and \ref{lemma35} in Subsection \ref{an}).

First, we prove the result of Theorem \ref{thm-Poin} under an additional technical condition on the smoothness of a test function.
Namely, we assume that the Fourier transform of $\varphi:\mathbb{R}\to\mathbb{R}$
satisfies
\begin{equation}
\int_{-\infty}^{\infty} (1+|t|^{4+\varepsilon})|\hat{\varphi}(t)|dt<\infty,
\label{ineq:fourier}
\end{equation}
where $\varepsilon$ is an arbitrary small positive number. Once the result is established for such test functions, it can be easily
extended to the case of functions with bounded continuous derivative using (\ref{pi1}).

Let $Z_n(x),Z(x)$ be the characteristic functions of the normalized linear statistic 
(\ref{wings}) and the Gaussian distribution with zero mean and 
$Var_{band}[\varphi]$ 
variance, respectively, i.e.
\begin{equation}
Z_n(x)=\mathbb{E}\{e^{ix\mathscr{M}_n^{\circ}[\varphi]}\}
\label{Zn},
\end{equation}
and
\begin{equation}
Z(x)=\exp\{-x^2Var_{band}[\varphi]/2\}.
\end{equation}
It is sufficient to show that for any $x\in\mathbb{R}$
\begin{equation}
\lim_{n\to\infty}Z_n(x)=Z(x).
\end{equation}
We note that $Z(x)$ is the unique solution of the integral equation
\begin{equation}
Z(x)=1-Var_{band}[\varphi]\int_0^xyZ(y)dy
\label{eqn:Zn1}
\end{equation}
in the class of bounded continuous functions. 
It follows from (\ref{Zn}) that the derivative of $Z_n(x)$ can be written as
\begin{equation}
Z_n'(x)=i\mathbb{E}\{\mathscr{M}_n^{\circ}[\varphi]e^{ix\mathscr{M}_n^{\circ}[\varphi]}\}.
\end{equation}
To bound the derivative of $Z_n,$ we use the Poincar\'e inequality. Since the Poincar\'e Inequality tensorises (see e.g. \cite{AGZ}),
the joint distribution of $\{W_{jk}\}_{(j,k)\in I_n^+}$ on $\mathbb{R}^{n(b_n+1)}$ satisfies the 
Poincar\'e Inequality with the same constant $m>0$ , i.e. for all continuously differentiable function $\Phi$, we have
\begin{equation}
Var\{\Phi(\{W_{jk}\}_{(j,k)\in I_n^+})\}\leq \frac{1}{m}\*
\sum_{(j,k)\in I_n^+}\mathbb{E}\{|\frac{\partial\Phi}{\partial W_{jk}}(\{W_{jk}\})|^2\}\label{ppi}.
\end{equation}
Let
\begin{equation}
\beta_{jk}=(1+\delta_{jk})^{-1}=\left\{\begin{array}{cc}1&j\neq k,\\1/2&j=k.\end{array}\right.
\label{beta}
\end{equation}
Since
\begin{equation}
\frac{\partial\mathscr{M}_n[\varphi]}{\partial W_{jk}}=\frac{2\beta_{jk}}{\sqrt{n}}\varphi'_{jk}(M),
\label{lem1eqn}
\end{equation}
we have
\begin{eqnarray}
Var\{\mathscr{M}_n[\varphi]\}&\leq&\frac{2}{mn}\mathbb{E}\{Tr(\varphi'(M)\varphi'(M)^*)\} \nonumber\\
&\leq&\frac{2}{m}(\sup_{\lambda\in\mathbb{R}}|\varphi'(\lambda)|)^2.
\label{pi1}
\end{eqnarray}
Applying the Cauchy-Schwarz inequality, we obtain
\begin{equation}
|Z_n'(x)|\leq\sqrt{\frac{2}{m}}\sup_{\lambda\in\mathbb{R}}|\varphi'(\lambda)|.
\end{equation}
In addition, (\ref{pi1}) implies
\begin{equation}
|Z_n''(x)|\leq \frac{2}{m}(\sup_{\lambda\in\mathbb{R}}|\varphi'(\lambda)|)^2.
\end{equation}
Taking into account $Z_n(0)=1$, we have
\begin{equation}
Z_n(x)=1+\int_0^xZ_n'(y)dy.
\end{equation}
We note that the sequence $\{(Z_n(x), Z_n'(x))\}$ is pre-compact in $C([-T, T], \R^2)$ for any $T>0.$
Therefore, it is enough to show that for any converging subsequence one has
\begin{equation}
\lim_{n_j\to\infty}Z'_{n_j}(x)=-xVar_{band}[\varphi]\lim_{n_j\to\infty}Z_{n_j}(x).
\label{eqn:limitZn}
\end{equation}
For the convenience of the reader, we use the same notations as in \cite{P.CLT}:
\begin{equation}
D_{jk}:=\partial/\partial M_{jk};\end{equation}
\begin{equation}
U(t):=e^{itM},U_{jk}(t):=(U(t))_{jk};
\label{ujk}
\end{equation}
\begin{equation}
u_n(t):=TrU(t), \ u_n^{\circ}(t):=u_n(t)-E\{u_n(t)\}.
\label{moskva}
\end{equation}

Since $U(t)$ is a unitary matrix, we have
\begin{equation}
\|U\|=1;\ |U_{jk}|\leq 1; \ \sum_{k=1}^n|U_{jk}|^2=1.
\label{Uprop}
\end{equation}
Moreover,
\begin{equation}
D_{jk}U_{ab}(t)=i\beta_{jk}(U_{aj}*U_{bk}+U_{ak}*U_{bj})(t),
\label{dif}
\end{equation}
where 
\begin{equation}
\label{svertka}
f*g(t):=\int_0^t f(s)\*g(t-s)\*ds.
\end{equation}
Applying the Fourier inversion formula
\begin{equation}
\varphi(\lambda)=\int_{-\infty}^{\infty} e^{it\lambda}\hat{\varphi}(t)dt,
\end{equation}
we can write
\begin{equation}
\mathscr{M}_n^{\circ}[\varphi]=(b_n/n)^{1/2}\int_{-\infty}^{\infty}\hat{\varphi}(t)u_n^{\circ}(t)dt.
\end{equation}
Therefore,
\begin{equation}
Z'_n(x)=i\int_{-\infty}^{\infty}\hat{\varphi}(t)Y_n(x,t)dt,
\label{eqn:Znprime}
\end{equation}
where 
\begin{equation}
Y_n(x,t):=\mathbb{E}\{(b_n/n)^{1/2}u_n^{\circ}(t)e_n(x)\},
\label{yn}
\end{equation}
and
\begin{equation}e_n(x)=e^{ix\mathscr{M}_n^{\circ}[\varphi]}.
\end{equation}
Taking into account (\ref{eqn:limitZn}) and (\ref{eqn:Znprime}), 
we conclude that the result of the theorem follows if we can establish the following two facts.
First, we have to show that the sequence $\{Y_n\}$ 
is bounded and equicontinuous on any bounded subset of $\{t\geq0,x\in\mathbb{R}\}.$
Second, we have to show that
any uniformly converging subsequence of $Y_n$ has the same 
limit 
\begin{equation}Y(x,t)=\overline{Y(-x,-t)}\end{equation}
such that
\begin{equation}
i\int_{-\infty}^{\infty}\hat{\varphi}(t)Y(x,t)dt=-xVar_{band}[\varphi]Z(x).
\end{equation}

The main technical part of the proof of Theorem \ref{thm-Poin} is the following proposition.
\begin{prop}
\label{ynprop}
$Y_n(x,t)$ satisfies the equation 
\begin{eqnarray}
\lefteqn{Y_n(x,t)+\frac{2(2b_n+1)\sigma^2}{b_n}\int_0^t\int_0^{t_1}\bar{v}_n(t_1-t_2)Y_n(x,t_2)dt_2dt_1}\nonumber\\
&&=xZ_n(x)A_n(t)+2i\kappa_4xZ_n(x)\int_0^t\bar{v}_n*\bar{v}_n(t_1)dt_1\int_{-\infty}^{\infty}t_2\bar{v}_n*\bar{v}_n(t_2)\hat{\varphi}(t_2)dt_2+
r_n(x,t),
\label{equationyn}
\end{eqnarray}
where
\begin{eqnarray}
\label{An}
& & A_n(t):=-\frac{2\sigma^2}{n}\int_0^t\sum_{(j,k)\in I_n}\mathbb{E}\{U_{jk}(t_1)\varphi_{jk}'(M)\}dt_1, \\
\label{vnt}
& & \bar{v}_n(t):=n^{-1}\* \mathbb{E} Tr e^{itM},
\end{eqnarray}
$U_{jk}(t)$ is defined in (\ref{ujk}), 
and $r_n(x,t)$ converges to zero uniformly on any bounded subset of $\{t\geq0,x\in\mathbb{R}\}$.
\end{prop}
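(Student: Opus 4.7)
The plan is to derive \eqref{equationyn} by applying the cumulant (decoupling) formula to each matrix entry $M_{jk}$ and carefully identifying the leading contributions. The starting point is Duhamel's identity
$$u_n^{\circ}(t)=i\int_0^t \sum_{(j,k)\in I_n}\bigl[M_{jk}U_{kj}(s)-\mathbb{E}\{M_{jk}U_{kj}(s)\}\bigr]\,ds,$$
obtained from $\tfrac{d}{dt}U(t)=iMU(t)$. Multiplying by $(b_n/n)^{1/2}e_n(x)$, taking expectation, and expanding each $\mathbb{E}\{M_{jk}U_{kj}(s)e_n(x)\}$ via the cumulant series up to order four, I obtain a decomposition into terms graded by the scaling $\kappa_p(M_{jk})=\kappa_p(W_{jk})\,b_n^{-p/2}$. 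The truncation remainder of the cumulant series is controlled by the Poincar\'e inequality of Appendix~A.

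The $\kappa_2$ term $\frac{(1+\delta_{jk})\sigma^2}{b_n}\mathbb{E}\{D_{jk}[U_{kj}(s)e_n(x)]\}$ splits by the product rule into two pieces. For the first, using \eqref{dif} in the form $D_{jk}U_{kj}(s)=i\beta_{jk}[(U_{kj}*U_{jk})(s)+(U_{kk}*U_{jj})(s)]$ and centering against the corresponding term containing $Z_n(x)$, I get $\mathbb{E}\{D_{jk}U_{kj}(s)\,e_n^{\circ}(x)\}$ where $e_n^{\circ}=e_n-Z_n$. The dominant $(U_{kk}*U_{jj})$ subpart is analyzed by the decomposition $U_{jj}=\bar v_n+U_{jj}^{\circ}$, which uses the translation invariance of the circular band model to ensure $\mathbb{E}\{U_{jj}(t)\}=\bar v_n(t)$ is independent of $j$. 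Each cross term $\bar v_n(s')\mathbb{E}\{U_{jj}^{\circ}(s-s')e_n^{\circ}(x)\}$, after summing over $(j,k)\in I_n$ (which yields the combinatorial factor $2b_n+1$, the number of band-partners of each fixed index) and symmetrizing $s'\leftrightarrow s-s'$, reproduces $2(2b_n+1)\,\bar v_n*\bigl[(n/b_n)^{1/2}Y_n(x,\cdot)\bigr](s)$; combined with the overall prefactor $(b_n/n)^{1/2}\cdot i\cdot i\sigma^2/b_n$ and one outer integration in $s$, this collapses to the LHS self-interaction $\frac{2(2b_n+1)\sigma^2}{b_n}\int_0^t\int_0^{t_1}\bar v_n(t_1-t_2)Y_n(x,t_2)\,dt_2\,dt_1$. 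The $(U_{kj}*U_{jk})$ subpart is $o(1)$ thanks to $|U_{jk}|\le 1$, $\sum_k|U_{jk}|^2=1$ and a Poincar\'e-based covariance estimate, and is absorbed into $r_n$. For the second piece, I compute $D_{jk}e_n(x)=-2x\beta_{jk}(b_n/n)^{1/2}e_n(x)\int\hat\varphi(t_2)t_2 U_{jk}(t_2)\,dt_2$ from $D_{jk}u_n(t_2)=2i\beta_{jk}t_2 U_{jk}(t_2)$; replacing $e_n(x)$ by $Z_n(x)$ (the residual covariance is absorbed into $r_n$ via \eqref{pi1}) and using the symmetry $U_{jk}=U_{kj}$ of $U=e^{itM}$ yields exactly $xZ_n(x)A_n(t)$.

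The $\kappa_4$ term $\frac{\kappa_4}{6b_n^2}\mathbb{E}\{D_{jk}^3[U_{kj}(s)e_n(x)]\}$ is treated in parallel fashion: among all combinatorial pairings produced by three derivatives, the dominant one has one derivative acting on $e_n(x)$ (producing $\int\hat\varphi(t_2)t_2 U_{jk}(t_2)\,dt_2$), while the remaining two produce convolutions of pairs of diagonal $U$-entries. Replacing those diagonals by $\bar v_n$ via translation invariance and summing over $(j,k)\in I_n$ reproduces the explicit $\kappa_4$ term $2i\kappa_4 xZ_n(x)\int_0^t\bar v_n*\bar v_n(t_1)\,dt_1\int t_2\bar v_n*\bar v_n(t_2)\hat\varphi(t_2)\,dt_2$ on the right-hand side. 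All remaining contributions---the $\kappa_3$ term (whose naive size $O(\sqrt n)$ must be reduced to $o(1)$ via cancellations from $\sum_k|U_{jk}|^2=1$ and $U^{*}U=I$), the subleading pairings in the $\kappa_2$ and $\kappa_4$ expansions, the covariance-type fluctuations in $e_n(x)$, and the Poincar\'e-bounded remainder of the cumulant truncation---all carry a power saving that vanishes under $\sqrt n\ll b_n\ll n$ and are therefore absorbed into $r_n$.

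The main obstacle is the band-restricted $(U_{kk}*U_{jj})$ analysis: in the full Wigner case the unrestricted identity $\sum_{j,k}(U_{kk}*U_{jj})(s)=(u_n*u_n)(s)$ would close the equation directly, but under the band constraint this identity fails, so reducing the restricted sum to a closed expression in $\bar v_n$ and $Y_n$ depends essentially on the circular translation invariance and on showing that the joint fluctuation $\mathbb{E}\{U_{kk}^{\circ}U_{jj}^{\circ}e_n^{\circ}(x)\}$ is negligible after summation over $I_n$. This is exactly where the band structure, the assumption $\sqrt n\ll b_n$, and the Poincar\'e inequality interact; the limiting behavior of $A_n(t)$ itself differs from the Wigner expression and will be analyzed separately in the subsequent subsection on $A_n$.
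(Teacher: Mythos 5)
Your overall strategy coincides with the paper's: Duhamel's identity for $u_n^{\circ}$, the decoupling formula up to the fourth cumulant, the second-cumulant term producing both the convolution term on the left of \eqref{equationyn} and $xZ_n(x)A_n(t)$ (via the split $U_{jj}=\bar v_n+U_{jj}^{\circ}$, the $k$-independence of $\mathbb{E}\{U_{jj}\}$, and Poincar\'e variance bounds for the cross terms $T_{11}$, $T_{12}'$, $T_{13}'$), the fourth-cumulant term producing the explicit $\kappa_4$ contribution after factorizing the diagonal four-point function into $\bar v_n$'s, and the truncation error killed by $n/b_n^2\to0$. Up to minor imprecision (the factorizations in the $\kappa_4$ step need variance estimates, not just translation invariance, but those are within your Poincar\'e toolkit), this is the paper's proof.

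The genuine gap is your treatment of the third-cumulant term. Its prefactor is $\kappa_3/(\sqrt n\, b_n)$ against a sum over $|I_n|\sim 2nb_n$ pairs, so the naive size is $O(\sqrt n)$, and the cancellation mechanism you invoke ($\sum_k|U_{jk}|^2=1$ and $U^*U=I$) is not enough. The dangerous pieces are those with a single off-diagonal factor, of type $U_{jj}*U_{jk}*U_{kk}$, and in particular the pieces in which one derivative has hit $e_n(x)$, so that the resulting expectation involves the \emph{uncentered} $e_n$ and cannot be bounded by a variance alone: after splitting $e_n=e_n^{\circ}+\mathbb{E}\{e_n\}$ one is left with $\frac{|x\kappa_3|}{n\sqrt{b_n}}\sum_{(j,k)\in I_n}|\mathbb{E}\{U_{jj}(s_1)U_{jk}(s_2)U_{kk}(s_3)\}|$, and Cauchy--Schwarz with $\sum_k|U_{jk}|^2=1$ only gives $\sum_{(j,k)\in I_n}\mathbb{E}|U_{jk}|\leq\sqrt{2nb_n}\,\sqrt n$, i.e.\ an $O(1)$ contribution, not $o(1)$. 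The paper handles this by writing $\mathbb{E}\{U_{jj}U_{jk}U_{kk}\}$ as covariance terms (Poincar\'e, $O(1/b_n)$) plus $\mathbb{E}\{U_{jj}\}\mathbb{E}\{U_{jk}\}\mathbb{E}\{U_{kk}\}$, and then needs the nontrivial decorrelation bound $\sup_{j\neq k}|\mathbb{E}\{U_{jk}(t)\}|=O((1+t^6)/b_n)$ (Proposition \ref{propprop}), proved in Appendix C via a resolvent estimate, the Helffer--Sj\"ostrand functional calculus, and the exponential bound on $\mathbb{P}(\|M\|\geq 10\sigma)$ of Lemma \ref{norma}; the $t^6$ growth there is also why the Fourier condition \eqref{ineq:fourier} with weight $(1+|t|^{4+\varepsilon})$ enters, producing the $b_n^{-\varepsilon/6}$ factor in \eqref{T22bound}. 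Without this ingredient, or an equivalent substitute, your $\kappa_3$ term does not vanish and \eqref{equationyn} is not established.
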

The proof of Proposition \ref{ynprop} will be given in the remaining part of this subsection and in the next three subsections.

\begin{proof}

First, we show that $Y_n(x,t)$ is bounded and uniformly equicontinuous on bounded subsets of $\R^2.$
Indeed, applying inequality (\ref{pi1}) to $\varphi(\lambda)=e^{it\lambda}$ and $\varphi(\lambda)=i\lambda e^{it\lambda}$, we get
\begin{equation}
Var\{(b_n/n)^{1/2}u_n(t)\}\leq\frac{2t^2}{m}
\end{equation}
and
\begin{eqnarray}
Var\{(b_n/n)^{1/2}u'_n(t)\}\leq\frac{2}{m}(1+3\sigma^2t^2).
\end{eqnarray}
This implies
\begin{equation}|Y_n(x,t)|\leq Var^{1/2}\{(b_n/n)^{1/2}u_n(t)\}\leq\sqrt{\frac{2}{m}}|t|,
\end{equation}
\begin{equation}
|\frac{\partial}{\partial t}Y_n(x,t)|\leq Var^{1/2}\{(b_n/n)^{1/2}u_n'(t)\}\leq\sqrt{\frac{2}{m}(1+3\sigma^2t^2)},
\end{equation}
and
\begin{eqnarray}
|\frac{\partial}{\partial x}Y_n(x,t)|\leq\frac{2}{m}|t|\sup_{\lambda\in \mathbb{R}}|\varphi'(\lambda)|.
\end{eqnarray}
Therefore, we have shown that $\{Y_n\}$ is bounded and equicontinuous on any bounded subset of $\mathbb{R}^2$.
Applying the identity $e^{itM}=1+i\int_0^tMe^{isM}ds$, we have
\begin{equation}u_n(t)=n+i\int_0^t\sum_{(j,k)\in I_n}M_{jk}U_{jk}(t_1)dt_1,
\end{equation}
and
\begin{equation}
\label{spartak}
Y_n(x,t)=\frac{i}{\sqrt{n}}\int_0^t\sum_{(j,k)\in I_n}\mathbb{E}\{W_{jk}U_{jk}(t_1)e_n^{\circ}(x)\}dt_1,
\end{equation}
where $e_n^{\circ}=e_n-\mathbb{E}\{e_n\}$. 
To analyze (\ref{spartak}), we use the 
decoupling formula (\ref{decf}) with $p=3$ to obtain
\begin{equation}
\label{yyy}
Y_n(x,t)=\frac{i}{\sqrt{n}}\int_0^t\sum_{(j,k)\in I_n}\left\{\sum_{l=0}^3\frac{\kappa_{l+1,jk}}{b_n^{l/2}l!}
\mathbb{E}\{D_{jk}^l(U_{jk}(t_1)e_n^{\circ}(x))\}+\varepsilon_{3,jk}\right\}dt_1,
\end{equation}
where $\kappa_{l,jk}$ is the $l$th cumulant of $W_{jk}$, i.e. 
\[\kappa_{1,jk}=0,\ \kappa_{2,jk}=(1+\delta_{jk})\sigma^2;\]
in addition, for $j\neq k$ one has
\[\kappa_{3,jk}=\mathbb{E}\{(W_{12}^{(n)})^3\}=:\kappa_3,\ \kappa_{4,jk}=\kappa_4;\]
and for $j=k$
\[\kappa_{3,jj}=\mathbb{E}\{(W_{11}^{(n)})^3\}=:\kappa_3',\ \kappa_{4,jj}=\mathbb{E}\{(W_{11}^{(n)})^4\}-12\sigma^2=:\kappa_4'.\]
Moreover, we note that the remainder term $\varepsilon_{3,jk}$ in (\ref{yyy}) is bounded as
\begin{equation}
|\varepsilon_{3,jk}|\leq C_3\mathbb{E}\{|W_{jk}|^5\}\sup_{W_{jk}\in\mathbb{R}}\left|\frac{D_{jk}^4U_{jk}(t)e_n^{\circ}(x)}{b_n^2}\right|.
\label{eps}
\end{equation}
Since the marginal distribution of the matrix entries satisfies the PI with constant $m$ independent of $n$, 
the third and fourth cumulants are uniformly bounded in $n$, i.e. there exist $\sigma_3$ and $\sigma_4$ independent of $n$ such that
\begin{equation}
|\kappa_3|,|\kappa_3'|\leq \sigma_3,|\kappa_4'|\leq \sigma_4,
\label{inequal:cumulant}
\end{equation}
and $\sigma_5:=\max_{j,k,n} \mathbb{E} \{|W_{jk}|^5\}<\infty.$

We need the following technical lemma.
\begin{lem}
\label{difb}
\begin{equation}
|D_{jk}^l(U_{jk}(t)e_n^{\circ}(x))|\leq C_l(\sqrt{b_n/n}\*x,t),\ l=1,2,3,4,
\end{equation}
where $C_l(x,t)$ is some polynomial in $|x|,|t|$ of degree $l$ with positive coefficients independent of $n.$
\end{lem}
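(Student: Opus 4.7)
The plan is to apply the Leibniz rule
\[
D_{jk}^l\bigl(U_{jk}(t)\,e_n^{\circ}(x)\bigr)=\sum_{m=0}^l\binom{l}{m}\,D_{jk}^m U_{jk}(t)\cdot D_{jk}^{l-m}e_n^{\circ}(x),
\]
and bound each factor separately: I expect the first to be a polynomial of degree $m$ in $|t|$, and the second a polynomial of degree at most $l-m$ in $\sqrt{b_n/n}\,|x|$, so that their product is controlled by a polynomial of total degree $l$ in the two variables, as required.

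For the first factor I would iterate (\ref{dif}). One differentiation sends $U_{ab}(t)$ to a sum of convolutions $U_{aj}*U_{bk}+U_{ak}*U_{bj}$, and repeated differentiation applied to these convolutions (via the product rule for $*$) produces sums of longer iterated convolutions of unitary matrix entries. Using $|U_{\cdot\cdot}(t)|\le 1$ from (\ref{Uprop}) together with the elementary bound $|(f*g)(t)|\le \|f\|_\infty |t|$, a straightforward induction on $m$ yields $|D_{jk}^m U_{ab}(t)|\le C_m|t|^m$ for absolute constants $C_m$, $m\le 4$; in particular $|D_{jk}^m U_{jk}(t)|$ is a polynomial in $|t|$ of degree $m$ with positive coefficients independent of $n$.

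For the second factor, set $F:=\mathscr{M}_n^{\circ}[\varphi]$ and note that $D_{jk}^{l-m}e_n^{\circ}=D_{jk}^{l-m}e_n$, since $\mathbb{E}\{e_n(x)\}$ is a scalar independent of $M_{jk}$. Faà di Bruno's formula then gives
\[
D_{jk}^{l-m}e^{ixF}=e^{ixF}\sum_{(m_s)}c_{(m_s)}(ix)^{\sum_s m_s}\prod_{s\ge 1}\bigl(D_{jk}^{s}F\bigr)^{m_s},
\]
where the sum runs over nonnegative sequences $(m_s)$ with $\sum_s s\,m_s=l-m$. From (\ref{lem1eqn}) we have $D_{jk}F=2\beta_{jk}\sqrt{b_n/n}\,\varphi'_{jk}(M)$, and combining the Fourier representation $\varphi'_{jk}(M)=i\int t\,U_{jk}(t)\hat\varphi(t)\,dt$ with the bound from the previous paragraph gives
\[
|D_{jk}^a F|\le C'_a\sqrt{b_n/n}\int(1+|t|^{a})|\hat\varphi(t)|\,dt,
\]
which is finite for every $a\le 4$ by assumption (\ref{ineq:fourier}). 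Since $|e^{ixF}|=1$, each term contributes a bound of order $(\sqrt{b_n/n}\,|x|)^{\sum_s m_s}$, and because $\sum_s m_s\le\sum_s s\,m_s=l-m$, the second factor is controlled by a polynomial in $\sqrt{b_n/n}\,|x|$ of degree at most $l-m$ with nonnegative coefficients independent of $n$.

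Substituting both bounds into the Leibniz expansion yields $|D_{jk}^l(U_{jk}(t)e_n^{\circ}(x))|\le C_l(\sqrt{b_n/n}\,x,t)$, with $C_l$ a polynomial of total degree $l$ in its two arguments with positive coefficients independent of $n$. The only real obstacle is the bookkeeping: one must check that the Fourier moment hypothesis (\ref{ineq:fourier}) is strong enough to absorb every $\int|t|^{a}|\hat\varphi(t)|\,dt$ arising in the Faà di Bruno expansion, and indeed the maximum exponent $a$ that appears is $a=l\le 4$, matching exactly the integrability assumed.
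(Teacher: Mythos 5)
Your argument is correct and follows essentially the same route as the paper: the paper likewise bounds $|D_{jk}^l U_{jk}(t)|\leq c_l|t|^l$ by iterating (\ref{dif}), bounds $|D_{jk}^l e_n(x)|\leq c_l'(1+|(b_n/n)^{1/2}x|^l)$ using $D_{jk}e_n(x)=2i(b_n/n)^{1/2}\beta_{jk}x\,e_n(x)\varphi'_{jk}(M)$ together with the Fourier condition (\ref{ineq:fourier}), and then combines the two via the product rule; you have merely made the Leibniz and Fa\`a di Bruno bookkeeping explicit. (Only a cosmetic remark: for the $m=l$ term the factor is $e_n^{\circ}$ itself, which is bounded by $2$ rather than being a derivative of $e_n$, but this does not affect the conclusion.)
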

\begin{proof}
(\ref{dif}) implies 
\begin{equation}
|D_{jk}^lU_{jk}(t)|\leq c_l|t|^l.
\label{boundofdujk}
\end{equation}
In addition,
\begin{equation}
D_{jk}e_n(x)=-2(b_n/n)^{1/2}\beta_{jk}xe_n(x)\int_{-\infty}^{\infty} sU_{jk}(s)\hat{\varphi}(s)ds=2i(b_n/n)^{1/2}\beta_{jk}xe_n(x)\varphi_{jk}'(M).
\end{equation}
It follows from (\ref{ineq:fourier}) that $\varphi$ has fourth bounded derivative.
Thus, for $l=1,2,3,4$
\begin{equation}
|D_{jk}^le_n(x)|\leq c'_l(1+|(b_n/n)^{1/2}x|^l)\label{boundofden}.
\end{equation}
Combining (\ref{boundofdujk}) and (\ref{boundofden}) we obtain Lemma \ref{difb}.\qedhere
\end{proof}
Lemma \ref{difb} and (\ref{eps}) imply
\begin{equation}
|\varepsilon_{3jk}|\leq C_3\sigma_5\*C_4((b_n/n)^{1/2}x,t)/b_n^2.
\label{epsilonb}
\end{equation}
We can rewrite (\ref{yyy}) as
 \begin{equation}
\label{urav}
Y_n(x,t)=T_1+T_2+T_3+\mathscr{E}_3,
\end{equation}
where
\begin{equation}
T_l:=\frac{i}{l!\sqrt{nb_n^l}}\int_0^t\sum_{(j,k)\in I_n}\kappa_{l+1,jk}\mathbb{E}\{D_{jk}^l(U_{jk}(t_1)e_n^{\circ}(x))\}dt_1
\label{t1}, \ l=1,2,3,
\end{equation}
and
\begin{equation}
\mathscr{E}_3=in^{-1/2}\int_0^t\sum_{(j,k)\in I_n}\varepsilon_{3,jk}dt_1.
\end{equation}
By (\ref{epsilonb}), we have
\begin{equation}
\label{uravn}
|\mathscr{E}_3|\leq\frac{\sqrt{n}}{b_n}C_5((b_n/n)^{1/2}x,t).
\end{equation}
Since $n/b_n^2\to0$, we obtain that $\mathscr{E}_3\to0$ on any bounded subset of $\mathbb{R}^2$ as $n\to\infty.$
In the next three subsections, we consider separately each of the terms $T_l,\ l=1,2,3$ in (\ref{urav}) and finish the proof of Proposition \ref{ynprop}.

\subsection{Estimate of $T_1$}
\label{TT1}
The main result of this subsection is contained in the following proposition.

\begin{prop}
\label{T1}
Let $T_1$ be defined as in (\ref{t1}) with $l=1.$
Then
\begin{equation}
T_1= -\frac{2(2b_n+1)\sigma^2}{b_n}\int_0^t\int_0^{t_1}\bar{v}_n(t_1-t_2)Y_n(x,t_2)dt_2dt_1 + xZ_n(x)A_n(t)+\varepsilon_n(x,t),
\end{equation}
where 
$\bar{v}_n(t)$ is defined in (\ref{vnt}), 
$A_n(t)$
is defined in (\ref{An}),
and $\varepsilon_n(x,t) \to 0$ as $n\to \infty$ uniformly on any bounded subset of $\{(x,t),t\geq0\}.$
\end{prop}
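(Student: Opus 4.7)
The plan is to apply Leibniz's rule to $D_{jk}(U_{jk}(t_1)e_n^\circ(x))$ and split $T_1 = T_{1a} + T_{1b}$, with $T_{1a}$ (derivative hitting $U_{jk}$) producing the integral term in $\bar v_n * Y_n$, and $T_{1b}$ (derivative hitting $e_n^\circ$) producing $xZ_n(x)A_n(t)$. A useful preliminary observation is that $\kappa_{2,jk}\beta_{jk} = \sigma^2$ in both the diagonal ($j=k$) and off-diagonal cases, which handles the two cases uniformly and is the origin of the $\sigma^2$ appearing in the final coefficient.

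For $T_{1b}$, inserting the explicit formula $D_{jk}e_n(x) = 2i(b_n/n)^{1/2}\beta_{jk}xe_n(x)\varphi'_{jk}(M)$ from the proof of Lemma \ref{difb} gives
\[
T_{1b} = -\frac{2\sigma^2 x}{n}\int_0^t \sum_{(j,k)\in I_n}\mathbb{E}\{U_{jk}(t_1)\varphi'_{jk}(M)e_n(x)\}\,dt_1.
\]
Decomposing $e_n(x) = Z_n(x) + e_n^\circ(x)$, the $Z_n$ piece coincides with $xZ_n(x)A_n(t)$ by the definition (\ref{An}), while the $e_n^\circ$ remainder is handled by Cauchy--Schwarz combined with the Poincar\'e-based variance bound (\ref{pi1}), applied both to $e_n^\circ$ and to the centered version of $n^{-1}\sum U_{jk}\varphi'_{jk}(M)$; the resulting product of standard deviations tends to zero on bounded subsets of $\{(x,t) : t \geq 0\}$ under $\sqrt n \ll b_n$.

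For $T_{1a}$, I would apply (\ref{dif}) with $a=j, b=k$ and use the symmetry $U = U^T$ (since $M$ is real symmetric) to write
\[
T_{1a} = -\frac{\sigma^2}{\sqrt{nb_n}}\int_0^t\sum_{(j,k)\in I_n}\mathbb{E}\{(U_{jj}*U_{kk} + U_{jk}*U_{jk})(t_1)e_n^\circ(x)\}\,dt_1.
\]
For the diagonal convolution $U_{jj}*U_{kk}$, translation invariance of the band on the discrete circle gives $\mathbb{E}\{U_{kk}(t)\} = \bar v_n(t)$ for every $k$ and $|\{k : d_n(j,k) \leq b_n\}| = 2b_n+1$ for every $j$. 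Setting $S_j(t) := \sum_{k:d_n(j,k)\leq b_n} U_{kk}(t)$ and replacing $S_j(t)$ by its mean $(2b_n+1)\bar v_n(t)$, and then using the identity $\mathbb{E}\{u_n^\circ(s)e_n^\circ\} = (n/b_n)^{1/2}Y_n(x,s)$, one extracts from this piece the contribution $-\frac{(2b_n+1)\sigma^2}{b_n}\int_0^t\int_0^{t_1}\bar v_n(t_1-t_2)Y_n(x,t_2)\,dt_2\,dt_1$. For the off-diagonal convolution, the unitarity identity $\sum_k U_{jk}(s)U_{jk}(t_1-s) = U_{jj}(t_1)$ (from $U(s)U(t_1-s) = U(t_1)$ with $U$ symmetric) yields $\sum_{(j,k)\in I_n}U_{jk}(s)U_{jk}(t_1-s) = u_n(t_1) - \sum_{(j,k)\notin I_n}U_{jk}(s)U_{jk}(t_1-s)$; the $u_n(t_1)$ term contributes only an $O(b_n^{-1})$ remainder, while the tail sum is handled via translation invariance and the band-geometric techniques that will be developed in Subsection \ref{an} (cf.\ Lemmas \ref{lemma34}, \ref{lemma35}), producing a matching $(2b_n+1)\sigma^2/b_n$ contribution and doubling the coefficient to the claimed $2(2b_n+1)\sigma^2/b_n$.

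The hard part will be quantifying all of these band-averaging replacements with errors that are $o(\sqrt{nb_n})$, so that after division by the prefactor $\sqrt{nb_n}$ they vanish even when paired with $e_n^\circ$ via Cauchy--Schwarz. The replacement $S_j(t) \approx (2b_n+1)\bar v_n(t)$ is the model case: Poincar\'e gives $\mathrm{Var}(S_j(t)) = O(b_n)$, so the standard deviation $O(\sqrt{b_n})$ is indeed $o(2b_n+1)$ for $b_n \to \infty$. The off-diagonal tail $\sum_{(j,k)\notin I_n}U_{jk}(s)U_{jk}(t_1-s)$ is the genuinely new difficulty specific to the band setting, as the analogous sum telescopes to a simple quantity only in the Wigner regime $b_n \sim n$; its analysis relies essentially on translation invariance on the discrete circle together with the assumptions $\sqrt n \ll b_n \ll n$. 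All the smaller errors (the $e_n^\circ$ piece from $T_{1b}$, the $O(b_n^{-1})$ term, and the concentration errors from the $S_j$ averaging) are collected into $\varepsilon_n(x,t)$, which vanishes uniformly on bounded subsets of $\{t\geq 0, x\in\R\}$.
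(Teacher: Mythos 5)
Your overall split of $T_1$ according to whether $D_{jk}$ hits $U_{jk}$ or $e_n^{\circ}$ is the same as the paper's ($T_{11}+T_{12}$ versus $T_{13}$ in (\ref{t11})--(\ref{t13})), and your treatment of the $e_n^{\circ}$-derivative piece (decompose $e_n=Z_n+e_n^{\circ}$, identify $xZ_n(x)A_n(t)$, kill the remainder by a Poincar\'e--Cauchy--Schwarz bound) is essentially the paper's estimate of $T_{13}$. The genuine error is in your accounting of the main convolution term. In the paper the \emph{entire} coefficient $\frac{2(2b_n+1)\sigma^2}{b_n}$ comes from the diagonal convolution $U_{jj}*U_{kk}$: writing $\mathbb{E}\{U_{jj}(t_2)U_{kk}(t_1-t_2)e_n^{\circ}\}=\mathbb{E}\{U_{kk}\}\mathbb{E}\{U_{jj}e_n^{\circ}\}+\mathbb{E}\{U_{jj}\}\mathbb{E}\{U_{kk}e_n^{\circ}\}+\mathbb{E}\{U_{jj}^{\circ}U_{kk}^{\circ}e_n^{\circ}\}$, \emph{both} cross terms survive and are equal after the change of variable $t_2\mapsto t_1-t_2$ in the convolution, producing the factor $2$; only the doubly centered remainder $T_{12}'$ is small, and it is estimated by a dedicated Poincar\'e bound to be $O(\sigma^2 t^4\sqrt{n}/b_n)$. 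Your one-sided replacement of $S_j(t)=\sum_{k:d_n(j,k)\le b_n}U_{kk}(t)$ by its mean captures only one of these cross terms, hence only half the coefficient, and your justification of that replacement is not adequate: the standard deviation of $S_j$ being $O(\sqrt{b_n})=o(b_n)$ is irrelevant, because after summing over the $n$ values of $j$ and multiplying by the prefactor $\sigma^2/\sqrt{nb_n}$ a crude bound on the neglected piece is of order $\sqrt{n}\,t^2$, which diverges; indeed the neglected piece contains the missing order-one contribution.

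Your attempt to recover the other half from the off-diagonal convolution $U_{jk}*U_{jk}$ cannot work. That sum is paired with the \emph{centered} factor $e_n^{\circ}$, so only its fluctuations contribute: this is exactly the paper's $T_{11}$, which is bounded via the Poincar\'e inequality (for fixed $k$, $\mathrm{Var}\{\sum_{j:(j,k)\in I_n}U_{jk}*U_{jk}(t_1)\}\le 4t_1^4/(mb_n)$) by $|T_{11}|\le \frac{2\sqrt{n}\sigma^2 t^3}{3\sqrt{m}\,b_n}\to 0$ under $\sqrt{n}\ll b_n$. Your own unitarity manipulation confirms this if carried out: the $u_n(t_1)$ piece contributes $O(1/b_n)$ (as you note), and the out-of-band tail, being the difference of two quantities each of which pairs with $e_n^{\circ}$ at size $o(1)$, cannot produce a term of order one, let alone exactly $(2b_n+1)\sigma^2/b_n\int\!\!\int \bar v_n Y_n$. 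Relatedly, the appeal to the machinery of Subsection \ref{an} (Lemmas \ref{lemma34}, \ref{lemma35}) is misplaced here: that combinatorial analysis is needed for the limit of $A_n(t)$, i.e.\ for the $T_{13}$ contribution, not for the convolution terms, which in the band case are handled purely by variance (Poincar\'e) estimates.
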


\begin{proof}

First, by (\ref{dif}) we write
\begin{equation}
T_1=T_{11}+T_{12}+T_{13},
\end{equation}
where
\begin{eqnarray}
T_{11}=-\frac{\sigma^2}{\sqrt{nb_n}}\int_0^t\sum_{(j,k)\in I_n}\mathbb{E}\{U_{jk}*U_{jk}(t_1)e_n^{\circ}(x)\}dt_1, \label{t11}\\
T_{12}=-\frac{\sigma^2}{\sqrt{nb_n}}\int_0^t\sum_{(j,k)\in I_n}\mathbb{E}\{U_{jj}*U_{kk}(t_1)e_n^{\circ}(x)\}dt_1\label{t12},\\
T_{13}=-\frac{2\sigma^2}{n}\int_0^t\sum_{(j,k)\in I_n}\mathbb{E}\{U_{jk}(t_1)xe_n(x)\varphi_{jk}'(M)\}dt_1. \label{t13}
\end{eqnarray}
It follows from the Cauchy-Schwarz inequality and $|e_n(x)|\leq 1$ that
\begin{equation}
|T_{11}|\leq\frac{\sigma^2}{\sqrt{nb_n}}\int_0^t\sum_{k=1}^nVar^{1/2}\left\{\sum_{j:{(j,k)\in I_n}}(U_{jk}*U_{jk})(t_1)\right\}dt_1
\label{t11b}.
\end{equation}
Let us fix $k$ and define
\begin{equation}
U^{(k)}(t):=(U_{jl}^{(k)}(t))_{j,l=1,...,n},
\end{equation}
where
\begin{equation}
U_{jl}^{(k)}(t)=\left\{\begin{array}{cc}U_{jl}(t)&\mbox{ if } {(j,k)\in I_n,}\\ 0& \mbox{ otherwise. }\end{array}\right. 
\end{equation}
Then
\begin{equation}
\|U^{(k)}(t)\|\leq 1.
\end{equation}
By the Poincar\'e Inequality (\ref{ppi}), (\ref{beta}), (\ref{dif}), and the Cauchy-Schwarz Inequality, we have
\begin{eqnarray}
& & Var\{\sum_{j:{(j,k)\in I_n}}U_{jk}*U_{jk}(t_1)\} \leq \frac{4}{mb_n}\sum_{(p,s)\in I_n^+}\mathbb{E}\{|\sum_{j:{(j,k)\in I_n}}U_{jp}*
U_{ks}*U_{jk}(t_1)+U_{js}*U_{kp}*U_{jk}(t_1)|^2\}\nonumber\\
& & \leq \frac{8t_1^2}{mb_n}\mathbb{E}\{\int_0^{t_1}\int_0^{t_2}\sum_{p=1}^n|(U(t_1-t_2)U^{(k)}(t_3))_{pk}|^2\sum_{s=1}^n|U_{ks}(t_2-t_3)|^2dt_3dt_2\}.
\end{eqnarray}
It follows from (\ref{Uprop}) and
\begin{equation}
\sum_{p=1}^n\left|(U(t_1-t_2)U^{(k)}(t_3))_{pk}\right|^2=\|U(t_1-t_2)U^{(k)}(t_3)e_k\|^2=\|U^{(k)}(t_3)e_k\|^2\leq\|e_k\|^2=1\label{bound1}
\end{equation}
that we have
\begin{equation}
Var\{\sum_{j:{(j,k)\in I_n}}U_{jk}*U_{jk}(t_1)\}\leq\frac{4t_1^4}{mb_n}.
\end{equation}
Hence, 
\begin{equation}
|T_{11}|\leq\frac{2\sqrt{n}\sigma^2t^3}{3\sqrt{m}b_n}.
\label{T11bound}
\end{equation}
Recall that $n/b_n^2\to 0$, so $T_{11}\to0$ as $n\to\infty$ if $t$ is bounded.

Now, we turn out attention to (\ref{t12}). We write $T_{12}$ as follows
\begin{equation}
\label{T12}
T_{12}=-\frac{2\sigma^2}{\sqrt{nb_n}}\int_0^t\int_0^{t_1}\sum_{k=1}^n\left\{\mathbb{E}\{U_{kk}(t_1-t_2)\}
\sum_{j:{(j,k)\in I_n}}\mathbb{E}\{U_{jj}(t_2)e_n^{\circ}(x)\}\right\}dt_2dt_1+T_{12}',
\end{equation}
where
\begin{equation}
T_{12}'=-\frac{\sigma^2}{\sqrt{nb_n}}\int_0^t\int_0^{t_1}\sum_{(j,k)\in I_n}
\mathbb{E}\{U_{kk}^{\circ}(t_1-t_2)U_{jj}^{\circ}(t_2)e_n^{\circ}(x)\}dt_2dt_1\label{t12'}.
\end{equation}
Since $\mathbb{E}\{U_{kk}(t)\}$ and $\mathbb{E}\{U_{kk}(t)e_n^{\circ}(x)\}$ are $k$-independent, $\mathbb{E}\{U_{kk}(t)\}=\bar{v}_n(t)$, and 
\begin{equation}
\sum_{j:{(j,k)\in I_n}}\mathbb{E}\{U_{jj}(t_2)e_n^{\circ}(x)\}=\frac{2b_n+1}{n}\mathbb{E}\{u(t_2)e_n^{\circ}(x)\}=\frac{2b_n+1}{\sqrt{nb_n}}Y_n(x,t).
\end{equation}
Thus, the first term in (\ref{T12}) can be written as
\begin{equation}
-\frac{2(2b_n+1)\sigma^2}{b_n}\int_0^t\int_0^{t_1}\bar{v}_n(t_1-t_2)Y_n(x,t_2)dt_2dt_1.
\end{equation}
We are left to bound $T_{12}'$. The Cauchy-Schwarz inequality and $|e^{\circ}_n(x)|\leq2$ imply
\begin{equation}
|T_{12}'|\leq\frac{2\sigma^2}{\sqrt{nb_n}}\int_0^t\int_0^{t_1}\sum_{k=1}^nVar^{1/2}\{U_{kk}(t_2)\}
Var^{1/2}\{\sum_{j:{(j,k)\in I_n}}U_{jj}(t_1-t_2)\}dt_2dt_1.
\end{equation}
Applying (\ref{ppi}), (\ref{dif}), (\ref{beta}) and the Cauchy-Schwarz inequality, we get
\begin{eqnarray}
Var\{U_{jk}(t)\}&\leq&\frac{4}{mb_n}\sum_{(p,s)\in I_n^+}\mathbb{E}\{|U_{jp}*U_{ks}(t)|^2\}
\leq\frac{4t}{mb_n}\mathbb{E}\{\int_0^{t}\sum_{(p,s)\in I_n^+}|U_{jp}(t_1)U_{ks}(t-t_1)|^2dt_1\}\nonumber\\
&\leq&\frac{4t}{mb_n}\mathbb{E}\{\int_0^{t}\sum_{p=1}^n|U_{jp}(t_1)|^2\sum_{s=1}^n|U_{ks}(t-t_1)|^2dt_1\}=\frac{4t^2}{mb_n},
\label{vukk}
\end{eqnarray}
and for fixed $k$,
\begin{eqnarray}
Var\{\sum_{j:(j,k)\in I_n}U_{jj}(t)\}&\leq&\frac{4}{mb_n}\sum_{(p,s)\in I_n^+}\mathbb{E}\{|\sum_{j:{(j,k)\in I_n}}U_{jp}*U_{js}(t)|^2\}\nonumber\\
&=&\frac{4}{mb_n}\sum_{(p,s)\in I_n^+}\mathbb{E}\{|\int_0^{t}\sum_{j:{(j,k)\in I_n}}U_{jp}(t_1)U_{js}(t-t_1)dt_1|^2\}\nonumber\\
&=&\frac{4}{mb_n}\sum_{(p,s)\in I_n^+}\mathbb{E}\{|\int_0^{t}(U(t_1)U^{(k)}(t-t_1))_{ps}dt_1|^2\}\nonumber\\
&\leq&\frac{4t}{mb_n}\mathbb{E}\{\int_0^t\sum_{p,s=1}^n|(U(t_1)U^{(k)}(t-t_1))_{ps}|^2dt_1\}\nonumber\\
&=&\frac{4t}{mb_n}\mathbb{E}\{\int_0^tTr U(t_1)\*U^{(k)}(t-t_1)\*U(t_1)^*\*U^{(k)}(t-t_1)^*\*dt_1\}\nonumber\\
&=&\frac{4t}{mb_n}\mathbb{E}\{\int_0^tTr U^{(k)}(t-t_1)\*U^{(k)}(t-t_1)^*\*dt_1\}.
\end{eqnarray}
Since
\begin{equation}
Tr(U^{(k)}(t)U^{(k)}(t)^*)\leq (2b_n+1)\|U^{(k)}(t)U^{(k)}(t)^*\|\leq (2b_n+1)\|U^{(k)}(t)\|^2\leq 3\*b_n,
\end{equation}
we conclude that
\begin{equation}
Var\left\{\sum_{j:{(j,k)\in I_n}}U_{jj}(t)\right\}\leq\frac{12t^2}{m}.
\end{equation}
Therefore,
\begin{equation}
|T_{12}'|\leq\frac{2\sigma^2}{\sqrt{nb_n}}\int_0^t\int_0^{t_1}n\sqrt{\frac{4 t_2^2}{mb_n}}
\sqrt{\frac{12(t_1-t_2)^2}{m}}dt_2dt_1=\frac{Const\sigma^2t^4\sqrt{n}}{mb_n}.
\label{T12'bound}
\end{equation}
Now, we turn our attention to $T_{13}$. We can rewrite (\ref{t13}) in the following form
\begin{equation}
T_{13}=xZ_n(x)A_n(t)+T_{13}',
\label{T13}
\end{equation}
where $Z_n(x)$ is given by (\ref{Zn}), $A_n(t)$ is defined in (\ref{An}), 
and
\begin{equation}
T_{13}'=-\frac{2i\sigma^2x}{n}\int_0^t
\sum_{(j,k)\in I_n}\mathbb{E}\{U_{jk}(t_1)e_n^{\circ}(x)\int_{-\infty}^{\infty}t_2U_{jk}(t_2)\hat{\varphi}(t_2)dt_2\}dt_1.
\end{equation}
Then
\begin{equation}
|T_{13}'|\leq\frac{2\sigma^2x}{n}\int_0^t\int_{-\infty}^{\infty}\sum_{k=1}^nVar^{1/2}
\left\{\sum_{j:(j,k)\in I_n}U_{jk}(t_1)U_{jk}(t_2)\right\}|t_2||\hat{\varphi}(t_2)|dt_2dt_1.
\end{equation}
Let us fix $k.$  The Poincar\'e Inequality (\ref{ppi}), together with (\ref{dif}) and (\ref{beta}) imply
\begin{eqnarray}
\lefteqn{Var\left\{\sum_{j:(j,k)\in I_n}U_{jk}(t_1)U_{jk}(t_2)\right\}}\nonumber\\
&\leq&\frac{1}{mb_n}\sum_{(p,s)\in I_n^+}
\mathbb{E}\left\{\left|
\sum_{j:(j,k)\in I_n}[U_{jp}*U_{ks}(t_1)+U_{js}*U_{kp}(t_1)]U_{jk}(t_2)+U_{jk}(t_1)[U_{jp}*U_{ks}(t_2)+U_{js}*U_{kp}(t_2)]\right|^2\right\}\nonumber\\
&\leq&\frac{4}{mb_n}\sum_{(p,s)\in I_n^+}\mathbb{E}\left\{|\sum_{j:(j,k)\in I_n}U_{jp}*U_{ks}(t_1)U_{jk}(t_2)|^2+
|\sum_{j:(j,k)\in I_n}U_{js}*U_{kp}(t_1)U_{jk}(t_2)|^2\right.\nonumber\\
&&\left.+|\sum_{j:(j,k)\in I_n}U_{jk}(t_1)U_{jp}*U_{ks}(t_2)|^2+|\sum_{j:(j,k)\in I_n}U_{jk}(t_1)U_{js}*U_{kp}(t_2))|^2\right\}\nonumber\\
&\leq&\frac{8}{mb_n}\sum_{p,s=1}^n
\mathbb{E}\left\{|\sum_{j:(j,k)\in I_n}U_{jp}*U_{ks}(t_1)U_{jk}(t_2)|^2+|\sum_{j:(j,k)\in I_n}U_{jk}(t_1)U_{jp}*U_{ks}(t_2)|^2\right\}.
\end{eqnarray}
Note that the Cauchy-Schwarz inequality gives us
\begin{eqnarray}
&&|\sum_{j:(j,k)\in I_n}U_{jp}*U_{ks}(t_1)U_{jk}(t_2)|^2=|\int_0^{t_1}\sum_{j:(j,k)\in I_n}U_{jp}(t_3)U_{ks}(t_1-t_3)U_{jk}(t_2)dt_3|^2
\nonumber\\
&&=|\int_0^{t_1}(U(t_3)U^{(k)}(t_2))_{pk}U_{ks}(t_1-t_3)dt_3|^2\leq t_1\int_0^{t_1}|(U(t_3)U^{(k)}(t_2))_{pk}U_{ks}(t_1-t_3)|^2dt_3.
\end{eqnarray}
Using (\ref{Uprop}) and (\ref{bound1}), we obtain
\begin{equation}
\sum_{p,s=1}^n|\sum_{j:(j,k)\in I_n}U_{jp}*U_{ks}(t_1)U_{jk}(t_2)|^2\leq t_1\int_0^{t_1}\sum_{p=1}^n|(U(t_3)U^{(k)}(t_2))_{pk}|^2
\sum_{s=1}^n|U_{ks}(t_1-t_3)|^2dt_3\leq \frac{t_1^2}{2}.
\end{equation}
Hence,
\begin{equation}
Var\{\sum_{j:(j,k)\in I_n}U_{jk}(t_1)U_{jk}(t_2)\}\leq\frac{4(t_1^2+t_2^2)}{mb_n}.
\end{equation}
Therefore,
\begin{eqnarray}
|T_{13}'|&\leq&2\sigma^2|x|\int_0^t\int_{-\infty}^{\infty}|t_2|\sqrt{\frac{4(t_1^2+t_2^2)}{mb_n}}|\hat{\varphi}(t_2)|dt_2dt_1\nonumber\\
&=&2\sigma^2|x|\int_0^t\left[\int_{|t_2|\leq t_1}|t_2|\sqrt{\frac{4(t_1^2+t_2^2)}{mb_n}}|\hat{\varphi}(t_2)|dt_2+\int_{|t_2|>t_1}|t_2|
\sqrt{\frac{4(t_1^2+t_2^2)}{mb_n}}|\hat{\varphi}(t_2)|dt_2\right]dt_1\nonumber\\
&\leq&\frac{4\sqrt{2}\sigma^2|x|}{\sqrt{mb_n}}\int_0^t\left[t_1^2\int_{|t_2|\leq t_1}|\hat{\varphi}(t_2)|dt_2+
\int_{|t_2|>t_1}|t_2|^2|\hat{\varphi}(t_2)|dt_2\right]dt_1\nonumber\\
&\leq&\frac{\sigma^2|x|}{\sqrt{mb_n}}C_3(t).
\label{T13'bound}
\end{eqnarray}
Combining the bounds obtained in this subsection, we get
\begin{equation}
T_1= -\frac{2(2b_n+1)\sigma^2}{b_n}\int_0^t\int_0^{t_1}\bar{v}_n(t_1-t_2)Y_n(x,t_2)dt_2dt_1 + xZ_n(x)A_n(t)+\varepsilon_n(x,t).
\end{equation}
Since $b_n/n\to 0, \ n/b_n^2\to 0$, using (\ref{T11bound}), (\ref{T12'bound}), and (\ref{T13'bound}), we have that 
$\varepsilon_n(x,t)=T_{11}+T_{12}'+T_{13}'\to 0$ on any bounded subset of $\{(x,t),t\geq0\}.$
Proposition \ref{T1} is proven. \end{proof}

%%%%%%%%%%%%%%%%%%%%%%%%%%%%%%%%%%%%%%%%
\subsection{Estimate of $T_2$}
\label{TT2}
The main result of this subsection is the following proposition.
\begin{prop}
\label{t2}
Let $T_2$ be defined as in (\ref{t1}) with $l=2$.
Then $T_2$ converges to zero as $n\to \infty$ uniformly on any bounded subset of $\{t\geq0,x\in\mathbb{R}\}$.
\end{prop}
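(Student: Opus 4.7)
The proof of Proposition \ref{t2} follows the same scheme as the proof of Proposition \ref{T1} in Subsection \ref{TT1}. The plan is to expand $D_{jk}^2(U_{jk}(t_1) e_n^\circ(x))$ via the Leibniz rule into $(D_{jk}^2 U_{jk})e_n^\circ + 2(D_{jk}U_{jk})(D_{jk}e_n) + U_{jk}(D_{jk}^2 e_n)$, apply the commutation formula (\ref{dif}) to $D_{jk}U_{jk}$ and $D_{jk}^2 U_{jk}$ (obtaining two- and three-fold time convolutions of entries $U_{ab}$), and substitute $D_{jk}e_n = 2i(b_n/n)^{1/2}\beta_{jk}xe_n\varphi'_{jk}(M)$ for each derivative acting on $e_n$. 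This decomposes $T_2$ into a finite number of summands, each to be estimated separately against the prefactor $\kappa_{3,jk}/(2 b_n\sqrt n)$.

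A useful bookkeeping is that every differentiation of $e_n$ contributes a factor $(b_n/n)^{1/2}$. Combined with the prefactor and the Cauchy--Schwarz estimate $\sum_{(j,k)\in I_n}|U_{jk}|\le n\sqrt{b_n}$ (which follows from $\sum_{j,k}|U_{jk}|^2=n$ and $|I_n|\le n(2b_n+1)$), the sub-term carrying the full $(b_n/n)$ factor (the piece of $U_{jk}D_{jk}^2 e_n$ proportional to $x^2(\varphi'_{jk})^2$) is bounded by $O(\sqrt{b_n/n})$ and vanishes. The sub-term $(D_{jk}^2 U_{jk})e_n^\circ$ carries no such factor, but the centering of $e_n^\circ$ permits Cauchy--Schwarz to convert the summed expectation into a covariance bounded by $\mathrm{Var}^{1/2}\bigl\{\sum\kappa_{3,jk}D_{jk}^2U_{jk}\bigr\}\,\mathrm{Var}^{1/2}\{e_n\}$; the Poincar\'e inequality (\ref{ppi}) together with the matrix bounds $\|U^{(k)}\|\le 1$ and $\Tr(U^{(k)}(t)U^{(k)}(t)^*)\le 3b_n$ from Subsection \ref{TT1} produces the crucial $1/b_n$ that reduces this contribution to $O(1/\sqrt{b_n})$.

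The most delicate summands are the mixed term $2(D_{jk}U_{jk})(D_{jk}e_n)$ and the piece $2i(b_n/n)^{1/2}\beta_{jk}x U_{jk}e_n D_{jk}\varphi'_{jk}(M)$ of $U_{jk}D_{jk}^2e_n$, which carry only one factor $(b_n/n)^{1/2}$ and for which direct pointwise bounds yield only $O(1)$. Substituting (\ref{dif}) for $D_{jk}U_{jk}$ and for $D_{jk}\varphi'_{jk}(M) = -\beta_{jk}\int s\hat\varphi(s)(U_{jj}*U_{kk}+U_{jk}*U_{jk})(s)\,ds$, these reduce to sums involving the convolutions $U_{jj}*U_{kk}$ and $U_{jk}*U_{jk}$ multiplied by $U_{jk}(s)$. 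The triple product $U_{jk}(a)U_{jk}(t_1-a)U_{jk}(s)$ enjoys the unitarity saving $\sum_{(j,k)\in I_n}|U_{jk}|^3\le\sum_{(j,k)}|U_{jk}|^2\le n$, which yields $O(1/\sqrt{b_n})$. For the product $U_{jj}(a)U_{kk}(t_1-a)U_{jk}(s)$ I would split $e_n=\mathbb{E}\{e_n\}+e_n^\circ$: the centered part is controlled by Poincar\'e-based variance estimates analogous to those used for $T_{11}$ and $T_{13}'$ in (\ref{T11bound}) and (\ref{T13'bound}), while the deterministic part is handled by the same operator-norm arguments on $U^{(k)}$ that produced (\ref{T12'bound}).

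The main obstacle is the bookkeeping of the many sub-terms generated by the second derivative and, for each, identifying which of the three sources of smallness is operative: the factor $\sqrt{b_n/n}$ from differentiating $e_n$, the factor $1/\sqrt{b_n}$ from Poincar\'e variance bounds, or the unitarity saving $\sum|U_{jk}|^3\le n$. Once all sub-terms are collected, the hypothesis $\sqrt n\ll b_n\ll n$ guarantees that $T_2\to 0$ uniformly on any bounded subset of $\{t\ge 0, x\in\mathbb{R}\}$.
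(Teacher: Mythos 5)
Your decomposition is essentially the one the paper uses: sort the terms produced by $D_{jk}^2(U_{jk}e_n^{\circ})$ according to how many off-diagonal factors $U_{jk}$ they contain, kill the ones with at least two such factors by the unitarity bound $\sum_{(j,k)\in I_n}|U_{jk}(s_1)U_{jk}(s_2)|\le n$, and treat the $U_{jj}\!*\!U_{jk}\!*\!U_{kk}$ term that is paired with the centered factor $e_n^{\circ}$ by a Poincar\'e variance estimate of order $n\,b_n$, which indeed yields $O(1/\sqrt{b_n})$. Up to that point your accounting of the three sources of smallness is correct and matches the paper's treatment of $T_{21}$ and of the first term of $T_{22}$.

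The genuine gap is in the last step, for the mixed terms (one derivative on $e_n$, or the $D_{jk}\varphi'_{jk}$ piece) whose $U$-structure is $U_{jj}(a)U_{kk}(b)U_{jk}(s)$ multiplied by the \emph{uncentered} $e_n$. After you split $e_n=\mathbb{E}\{e_n\}+e_n^{\circ}$, the centered part is fine ($O(n^{-1/2})$ by the same variance bound), but the part with $\mathbb{E}\{e_n\}$ leaves you with $\frac{|x|}{n\sqrt{b_n}}\sum_{(j,k)\in I_n}\bigl|\mathbb{E}\{U_{jj}(a)U_{kk}(b)U_{jk}(s)\}\bigr|$, and this is a statement about an \emph{expectation}, not a fluctuation: the operator-norm arguments behind (\ref{T12'bound}) (and the Poincar\'e bounds (\ref{ppi}), (\ref{vukk})) only control variances and covariances, so after removing the covariance corrections (each $O(1/b_n)$, harmless) you are left with $\mathbb{E}\{U_{jj}\}\mathbb{E}\{U_{kk}\}\mathbb{E}\{U_{jk}\}$ summed over the band. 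With only the trivial bounds at your disposal ($|\mathbb{E}\{U_{jk}\}|\le 1$, or $\sum_{(j,k)\in I_n}|\mathbb{E}\{U_{jk}\}|\le n\sqrt{2b_n+1}$ by Cauchy--Schwarz) this contribution is $O(\sqrt{b_n})$ or $O(1)$ respectively --- it does not vanish. What is needed, and what your sketch does not supply, is a decay bound on the mean of a single off-diagonal entry of $e^{itM}$, namely Proposition \ref{propprop}: $\sup_{j\neq k}|\mathbb{E}\{U_{jk}(t)\}|=O((1+t^6)/b_n)$. This is proved in the paper by a separate argument (decoupling formula applied to the resolvent identity $z\mathbb{E}\{R_{ps}\}=\sum_j\mathbb{E}\{M_{pj}R_{js}\}$, a self-consistent equation for $g_n(z)$, and the Helffer--Sj\"ostrand calculus to transfer the bound from $R(z)$ to $U(t)$, plus a norm estimate $\mathbb{P}(\|M\|\ge 10\sigma)$ small), and it is the ingredient that makes $T_{22}$, and hence $T_2$, go to zero; a secondary point your proposal also misses is that the resulting $t^6$ growth must be reconciled with the assumption (\ref{ineq:fourier}), which is why the paper's final bound (\ref{T22bound}) carries the factor $|x|\,b_n^{-\varepsilon/6}$.
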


\begin{proof}
\begin{equation}
T_2=\frac{i\kappa_3}{2\sqrt{n}b_n}\int_0^t\sum_{(j,k)\in I_n}\mathbb{E}\{D_{jk}^2(U_{jk}(t_1)e_n^{\circ}(x))\}dt_1+
\frac{i(\kappa_3'-\kappa_3)}{2\sqrt{n}b_n}\int_0^t\sum_{j=1}^n\mathbb{E}\{D_{jj}^2(U_{jj}(t_1)e_n^{\circ}(x))\}dt_1.
\end{equation}
By Lemma \ref{difb}, the second term in $T_2$ is bounded by $\frac{\sqrt{n}}{b_n}C_3((b_n/n)^{1/2}x,t).$ 
As for  the first term in $T_2$, it can be written as the sum of $T_{21}$ and $T_{22},$  where
\begin{eqnarray}
T_{21}&=&-\frac{i\kappa_3}{\sqrt{n}b_n}\int_0^t\sum_{(j,k)\in I_n}\mathbb{E}\{\beta_{jk}^2(U_{jk}*U_{jk}*U_{jk})(t_1)e_n^{\circ}(x)\}dt_1\nonumber\\
&&+\frac{2\kappa_3}{n\sqrt{b_n}}\int_0^t\sum_{(j,k)\in I_n}
\mathbb{E}\{\beta_{jk}^2(U_{jk}*U_{jk})(t_1)xe_n(x)\int_{-\infty}^{\infty}t_2U_{jk}(t_2)\hat{\varphi}(t_2)dt_2\}dt_1\nonumber\\
&&+\frac{2i\kappa_3}{n^{3/2}}\int_0^t\sum_{(j,k)\in I_n}
\mathbb{E}\{\beta_{jk}^2x^2U_{jk}(t_1)e_n(x)(\int_{-\infty}^{\infty} t_2U_{jk}(t_2)\hat{\varphi}(t_2)dt_2)^2\}dt_1\nonumber\\
&&+\frac{\kappa_3}{n\sqrt{b_n}}\int_0^t\sum_{(j,k)\in I_n}
\mathbb{E}\{\beta_{jk}^2xU_{jk}(t_1)e_n(x)\int_{-\infty}^{\infty} t_2(U_{jk}*U_{jk})(t_2)\hat{\varphi}(t_2)dt_2\}dt_1,
\end{eqnarray}
\begin{eqnarray}
T_{22}&=&-\frac{i\kappa_3}{\sqrt{n}b_n}\int_0^t\sum_{(j,k)\in I_n}\mathbb{E}\{\beta_{jk}^2(3U_{jj}*U_{jk}*U_{kk})(t_1)e_n^{\circ}(x)\}dt_1\nonumber\\
&&+\frac{2\kappa_3}{n\sqrt{b_n}}\int_0^t\sum_{(j,k)\in I_n}\mathbb{E}\{\beta_{jk}^2(U_{jj}*U_{kk})(t_1)xe_n(x)
\int_{-\infty}^{\infty}t_2U_{jk}(t_2)\hat{\varphi}(t_2)dt_2\}dt_1\nonumber\\
&&+\frac{\kappa_3}{n\sqrt{b_n}}\int_0^t\sum_{(j,k)\in I_n}\mathbb{E}\{\beta_{jk}^2xU_{jk}(t_1)e_n(x)
\int_{-\infty}^{\infty} t_2(U_{jj}*U_{kk})(t_2)\hat{\varphi}(t_2)dt_2\}dt_1.
\end{eqnarray}
Since for any $s_1,s_2,s_3\in \mathbb{R},$ one has
\begin{equation}
|\sum_{(j,k)\in I_n}U_{jk}(s_1)U_{jk}(s_2)U_{jk}(s_3)|\leq \sum_{j,k=1}^n|U_{jk}(s_1)U_{jk}(s_2)|\leq\left(\sum_{j,k=1}^n|U_{jk}(s_1)|^2\right)^{1/2}
\left(\sum_{j,k=1}^n|U_{jk}(s_2)|^2\right)^{1/2}=n.
\end{equation}
We have
\begin{equation}
|T_{21}|\leq \sqrt{n}/b_nC_2(\sqrt{b_n/n}x)t.
\label{T21bound}
\end{equation}
To estimate $T_{22}$, we note that
\begin{equation}
\left|\mathbb{E}\left\{\sum_{(j,k)\in I_n}\beta_{jk}^2U_{jj}(s_1)U_{jk}(s_2)U_{kk}(s_3)e_n^{\circ}(x)\right\}\right|
\leq Var^{1/2}\left\{\sum_{(j,k)\in I_n}\beta_{jk}^2U_{jj}(s_1)U_{jk}(s_2)U_{kk}(s_3)\right\}.
\end{equation}
Using the Poincar\'e inequality, we obtain an upper bound
\begin{eqnarray}
\lefteqn{Var\left\{\sum_{(j,k)\in I_n}\beta_{jk}^2U_{jj}(s_1)U_{jk}(s_2)U_{kk}(s_3)\right\}}\nonumber\\
&\leq&\frac{12}{mb_n}\sum_{(p,q)\in I_n^+}\mathbb{E}\left\{|\sum_{(j,k)\in I_n}U_{jp}*U_{jq}(s_1)U_{jk}(s_2)U_{kk}(s_3)|^2\right.+
|\sum_{(j,k)\in I_n}U_{jj}(s_1)U_{jp}*U_{kq}(s_2)U_{kk}(s_3)|^2\nonumber\\
&&+\left.|\sum_{(j,k)\in I_n}U_{jj}(s_1)U_{jq}*U_{kp}(s_2)U_{kk}(s_3)|^2+|\sum_{(j,k)\in I_n}U_{jj}(s_1)U_{jk}(s_2)U_{kp}*U_{kq}(s_3)|^2\right\}.
\end{eqnarray}
Let $\alpha_j=\sum_{k:(j,k)\in I_n}U_{jk}(s_2)U_{kk}(s_3)$ and $ D=diag\{\alpha_1,...,\alpha_n\}$. Then $|\alpha_j|\leq\sqrt{2b_n+1}$, and
\begin{eqnarray}
&&\sum_{(p,q)\in I_n^+}|\sum_{(j,k)\in I_n}U_{jp}*U_{jq}(s_1)U_{jk}(s_2)U_{kk}(s_3)|^2
\leq\sum_{(p,q)\in I_n^+}|s_1|\*\int_0^{s_1}|\sum_{(j,k)\in I_n}U_{jp}(t)U_{jq}(s_1-t)U_{jk}(s_2)U_{kk}(s_3)|^2dt\nonumber\\
&&=|s_1|\*\int_0^{s_1}\sum_{(p,q)\in I_n^+}|\sum_{j=1}^nU_{jp}(t)U_{jq}(s_1-t)\alpha_j|^2dt=|s_1|\*\int_0^{s_1}\*
\sum_{(p,q)\in I_n^+}|(U(t)DU(s_1-t))_{pq}|^2dt\nonumber\\
&&\leq |s_1|\int_0^{s_1}\*n\|U(t)DU(s_1-t)\|^2dt\leq s_1^2 \*n(2b_n+1).
\end{eqnarray}
Now let $D_1=diag\{U_{11}(s_1),...,U_{nn}(s_1)\}, \ D_2=diag\{U_{11}(s_3),...,U_{nn}(s_3)\},$ and $B=(B_{jk})_{j,k=1}^n$ be a $0$-$1$ 
band matrix, such that $B_{jk}=1_{(j,k)\in I_n}$. Then 
\begin{eqnarray}
& & \sum_{(p,q)\in I_n^+}|\sum_{(j,k)\in I_n}U_{jj}(s_1)U_{jp}*U_{kq}(s_2)U_{kk}(s_3)|^2 
\leq\sum_{(p,q)\in I_n^+} |s_2|\*\int_0^{s_2}|\sum_{(j,k)\in I_n}U_{jj}(s_1)U_{jp}(t)U_{kq}(s_2-t)U_{kk}(s_3)|^2dt\nonumber\\
& &=|s_2|\*\int_0^{s_2}\*\sum_{(p,q)\in I_n^+}|(U(t)D_1BD_2U(s_2-t))_{pq}|^2dt \leq s_2^2 \*n\|B\|^2 = s_2^2\*n(2b_n+1)^2. 
\end{eqnarray}
Hence,
\begin{eqnarray}
& & Var\{\sum_{(j,k)\in I_n}\beta_{jk}^2U_{jj}(s_1)U_{jk}(s_2)U_{kk}(s_3)\}\leq \frac{12n(2b_n+1)}{mb_n}(s_1^2+s_3^2+2\*s_2^2\*(2b_n+1)) \\
& & \leq n\*b_n\*C_2(s_1,s_2,s_3),\\
& & |\mathbb{E}\{\sum_{(j,k)\in I_n}\beta_{jk}^2U_{jj}(s_1)U_{jk}(s_2)U_{kk}(s_3)e_n^{\circ}(x)\}|\leq\sqrt{n\*b_n}C_2^{1/2}(s_1,s_2,s_3).
\end{eqnarray}
For the last two terms in $T_{22}$, taking into account that
\begin{equation}
\mathbb{E}\{U_{jj}(s_1)U_{jk}(s_2)U_{kk}(s_3)e_n(x)\}=\mathbb{E}\{U_{jj}(s_1)U_{jk}(s_2)U_{kk}(s_3)e_n^{\circ}(x)\}+\mathbb{E}\{U_{jj}(s_1)U_{jk}(s_2)
U_{kk}(s_3)\}
\mathbb{E}\{e_n(x)\},
\end{equation}
we have
\begin{equation}
|\sum_{(j,k)\in I_n}\beta_{jk}^2\mathbb{E}\{U_{jj}(s_1)U_{jk}(s_2)U_{kk}(s_3)e_n(x)\}|\leq \sqrt{nb_n}C_2^{1/2}(s_1,s_2,s_3)
+n/4+\sum_{(j,k)\in I_n,j\neq k}|\mathbb{E}\{U_{jj}(s_1)U_{jk}(s_2)U_{kk}(s_3)\}|.
\end{equation}
For $j\neq k,$ we can write
\begin{eqnarray}
& & \mathbb{E}\{U_{jj}(s_1)U_{jk}(s_2)U_{kk}(s_3)\}=\mathbb{E}\{U_{jj}^{\circ}(s_1)U_{jk}(s_2)U_{kk}^{\circ}(s_3)\}+\mathbb{E}\{U_{jj}(s_1)\}
\mathbb{E}\{U_{jk}^{\circ}(s_2)U_{kk}^{\circ}(s_3)\}\nonumber\\
& &+\mathbb{E}\{U_{jj}^{\circ}(s_1)U_{jk}^{\circ}(s_2)\}\mathbb{E}\{U_{kk}(s_3)\}+\mathbb{E}\{U_{jj}(s_1)\}\mathbb{E}\{U_{jk}(s_2)\}
\mathbb{E}\{U_{kk}(s_3)\}\label{ejjjkkk}.
\end{eqnarray}
So the left hand side of (\ref{ejjjkkk}) is bounded by
\begin{equation}
(Var\{U_{jj}(s_1)\}Var\{U_{kk}(s_2)\})^{1/2}+(Var\{U_{kk}(s_3)\}Var\{U_{jk}(s_2)\})^{1/2}+(Var\{U_{jj}(s_1)\}Var\{U_{jk}(s_2)\})^{1/2}+
|\mathbb{E}\{U_{jk}(s_2)\}|.
\end{equation}
By (\ref{vukk}) it is bounded from above by
\begin{equation}
\frac{4(|s_1\*s_2|+|s_1\*s_3|+|s_2\*s_3|)}{mb_n}+|\mathbb{E}\{U_{jk}(s_2)\}|.
\end{equation}
To bound the second term in the last expression, we use the following auxiliary proposition.
\begin{prop}
\label{propprop}
Let $M=W/\sqrt{b_n}$ be a real symmetric band random matrix defined as Theorem \ref{thm-Poin}
and \\
$U(t)=e^{itM}.$ Then 
\begin{equation}
\label{nem}
\sup _{j\neq k}  |\mathbb{E}\{U_{jk}(t)\}|=O\left(\frac{1+t^6}{b_n}\right).
\end{equation}
\end{prop}
The proof of Proposition \ref{propprop} is given in Appendix C.
Assuming (\ref{nem}), we conclude that
\begin{equation}
|\mathbb{E}\{U_{jj}(s_1)U_{jk}(s_2)U_{kk}(s_3)\}|\leq\frac{4(|s_1\*s_2|+|s_1\*s_3|+|s_2\*s_3|)}{mb_n}+O(\frac{1+s_2^6}{b_n}).
\end{equation}
Therefore,
\begin{equation}
|\sum_{(j,k)\in I_n}\beta_{jk}^2\mathbb{E}\{U_{jj}(s_1)U_{jk}(s_2)U_{kk}(s_3)e_n(x)\}|\leq \sqrt{nb_n}C_2^{1/2}(s_1,s_2,s_3)
+n/4+\frac{8\*n(|s_1\*s_2|+|s_1\*s_3|+|s_2\*s_3|)}{m}+ O(s_2^6\*n),
\end{equation}
and
\begin{equation}
|T_{22}|\leq\frac{\sqrt{n}}{b_n}C_4(t)+\frac{|x|}{b_n^{\varepsilon/6}}\*C_4(t)\label{T22bound}.
\end{equation}
This and (\ref{T21bound}) imply that $T_2$ converges to zero uniformly on any bounded subset of $\{t\geq0,x\in\mathbb{R}\}$.
Proposition \ref{t2} is proven. \end{proof}

%%%%%%%%%%%%%%%%%%%%%%%%%%%%%%%%%%%%%%%%%%%%%%%%%%%%%%%%%%%%%%%%%%%%%%%%%%%%%%%%%%%%%%%%%%%%%%%%%%%%%%%%%%%%%%%%%%%%%%%%%%%%%%%%%%%%%%%%
\subsection{Estimate of $T_3$.}
\label{TT3}

Finally, let us consider $T_3.$
The main result of this subsection is the following bound.

\begin{prop}
\label{T3}
Let $T_3$ be defined as in (\ref{t1}) (with $l=3).$  Then
\begin{equation}
\label{davis}
T_3(x,t)= 
2\*i\*\kappa_4xZ_n(x)\int_0^t\bar{v}_n*\bar{v}_n(t_1)dt_1\int_{-\infty}^{\infty}t_2\bar{v}_n*\bar{v}_n(t_2)\hat{\varphi}(t_2)dt_2+\epsilon_n(x,t),
\end{equation}
where 
$\epsilon_n(x,t) \to 0$ as $n\to \infty$ uniformly on any bounded subset of $\{(x,t),t\geq0\}.$
\end{prop}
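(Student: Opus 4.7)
The plan is to expand $D_{jk}^3(U_{jk}(t_1) e_n^\circ(x))$ by the Leibniz rule into four groups indexed by $l\in\{0,1,2,3\}$ (the number of derivatives that fall on $U_{jk}(t_1)$), then further expand each $D_{jk}^p U_{jk}$ and $D_{jk}^p e_n$ using (\ref{dif}), $D_{jk}e_n(x)=2i(b_n/n)^{1/2}\beta_{jk}xe_n(x)\varphi'_{jk}(M)$, and
\[
D_{jk}\varphi'_{jk}(M)=-\beta_{jk}\int t_2\hat\varphi(t_2)\bigl(U_{jj}*U_{kk}(t_2)+U_{jk}*U_{jk}(t_2)\bigr)dt_2.
\]
Exactly one sub-term produces the main contribution in (\ref{davis}); all others will be absorbed into $\epsilon_n(x,t)$.

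That main sub-term arises from the $l=1$ Leibniz piece (coefficient $\binom{3}{1}=3$) with the following specific choices: (i) $D_{jk}U_{jk}(t_1)\to i\beta_{jk}U_{jj}*U_{kk}(t_1)$; (ii) in $D_{jk}^2 e_n$, keep the summand $2i(b_n/n)^{1/2}\beta_{jk}x e_n(x)\cdot D_{jk}\varphi'_{jk}(M)$; (iii) inside $D_{jk}\varphi'_{jk}$ keep only $-\beta_{jk}\int t_2\hat\varphi(t_2)U_{jj}*U_{kk}(t_2)dt_2$. The signs assemble to $3\cdot i\cdot 2i\cdot(-1)=6$. Multiplying by the $T_3$-prefactor $\frac{i\kappa_4}{6\sqrt{n}b_n^{3/2}}$, noting $(b_n/n)^{1/2}/(\sqrt{n}b_n^{3/2})=1/(nb_n)$ and $\sum_{(j,k)\in I_n,\,j\neq k}\beta_{jk}^3=2nb_n$, applying the decoupling
\[
\mathbb{E}\Bigl\{e_n(x)U_{jj}*U_{kk}(t_1)\int t_2\hat\varphi(t_2)U_{jj}*U_{kk}(t_2)dt_2\Bigr\}=Z_n(x)\,\bar{v}_n*\bar{v}_n(t_1)\int t_2\hat\varphi(t_2)\bar{v}_n*\bar{v}_n(t_2)dt_2+E_{jk}(t_1),
\]
with summed error $\sum_{(j,k)\in I_n}|E_{jk}(t_1)|=o(nb_n)$, and integrating $t_1$ over $[0,t]$, produces exactly the right-hand side of (\ref{davis}).

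All remaining sub-terms are $o(1)$ uniformly on bounded subsets of $\{t\geq 0,x\in\R\}$. The $l=3$ Leibniz term is bounded as $T_{21}$ in Subsection \ref{TT2}, contributing $O(\sqrt{n}/b_n)$; the diagonal correction ($j=k$ with $\kappa_4'$) has only $n$ summands and is $O(\sqrt{n}/b_n^{3/2})$ by Lemma \ref{difb}; the $l=2$ Leibniz term carries an extra $(b_n/n)^{1/2}$ from its $D_{jk}e_n$ factor, so is $O((b_n/n)^{1/2})$ after summation; the $l=0$ term involves one to three factors of $\varphi'_{jk}$, each bringing $(b_n/n)^{1/2}$, giving at most $O((b_n/n)^{1/2})$ after summation and Cauchy--Schwarz; every alternative sub-term inside $l=1$ (those substituting $U_{jk}*U_{jk}$ for $U_{jj}*U_{kk}$, or $(\varphi'_{jk})^2$ for $D_{jk}\varphi'_{jk}$) sums only to $O(n)$ rather than $O(nb_n)$ and hence is $o(1)$. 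The main obstacle is the decoupling step: per-pair errors are amplified by $|I_n|\sim nb_n$ after summation, so the variance of $\sum_{(j,k)\in I_n}\beta_{jk}^3 U_{jj}*U_{kk}(t_1)\int t_2\hat\varphi(t_2)U_{jj}*U_{kk}(t_2)dt_2$ must be $o((nb_n)^2)$; this is achieved as in the estimates of $T_{12}'$ and $T_{22}$ by applying the Poincar\'e inequality (\ref{ppi}), differentiating via (\ref{dif}), and using the truncated unitary $U^{(k)}$ (from Subsection \ref{TT1}) with $\|U^{(k)}\|\leq 1$ and $\Tr(U^{(k)}U^{(k)*})\leq 3b_n$, so that the relevant variance is of order $nb_n$ and the decoupling error is $o(1)$.
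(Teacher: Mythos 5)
Your decomposition and bookkeeping for the main term coincide with the paper's: the $l=1$ Leibniz piece with both diagonal choices, the sign count $3\cdot i\cdot 2i\cdot(-1)=6$ cancelling the $1/6$, the resulting prefactor $i\kappa_4 x/(nb_n)$, and the count $\sum_{(j,k)\in I_n,\,j\neq k}\beta_{jk}^3\approx 2nb_n$ producing the factor $2$ in (\ref{davis}) (in the paper this factor enters through the $(2+1/b_n)$ in (\ref{mytischi2})); likewise, your observation that every sub-term containing at least two off-diagonal factors $U_{jk}$ sums to $O(n)$ is exactly the paper's $J_1,J_2$ treatment of $T_{32}$, and your decoupling step for the main term (covariance with $e_n$ controlled by a Poincar\'e variance bound, factorization of the expectation of the four diagonal factors as in the estimates of $T_{12}'$) is in substance the paper's splitting into $T_{312}$, $T_{313}$ and the lemma (\ref{shah3}). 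The loose rates you quote for the $l=0$ and $l=2$ pieces ($O((b_n/n)^{1/2})$ where the correct trivial bound is $O(1/b_n)$ or better) are harmless overestimates.

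There is, however, one step that fails as stated: the $l=3$ term. Its all-diagonal component, $\frac{\kappa_4}{\sqrt{nb_n^3}}\int_0^t\sum_{(j,k)\in I_n}\mathbb{E}\{(U_{jj}*U_{jj}*U_{kk}*U_{kk})(t_1)e_n^{\circ}(x)\}dt_1$ (the paper's $T_{311}$ in (\ref{komar1})), contains no off-diagonal factors, so the $T_{21}$-type Cauchy--Schwarz bound you invoke does not apply: the absolute sum is of order $nb_n$ (take all $U_{jj}$ close to $1$), and with the prefactor $(\sqrt{n}\,b_n^{3/2})^{-1}$ the trivial estimate gives only $O(\sqrt{n/b_n})$, which diverges in the regime $\sqrt{n}\ll b_n\ll n$. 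To obtain your claimed $O(\sqrt{n}/b_n)$ you must exploit the centering of $e_n^{\circ}$, writing $|\mathbb{E}\{A\,e_n^{\circ}\}|\le 2\,\mathbb{E}\{|A^{\circ}|\}$ and then bounding $Var\{U_{jj}(s_1)U_{jj}(s_2)\}\le 8(s_1^2+s_2^2)/(mb_n)$ via the Poincar\'e inequality, which is precisely how the paper proves $|T_{311}|\le C\sqrt{n}\,|t|^5/b_n$. Since you already use this centering-plus-variance mechanism for the decoupling of the main term, the gap is repairable, but as written the bound on the $l=3$ piece is unjustified and is exactly the error term that requires the extra argument.
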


\begin{proof}
%Recall the third and fourth cumulant of the diagonal entries of matrix $W$ has uniform bound $\sigma_3$ and $\sigma_4$. It follows that
One has
\begin{equation}
\label{eqn:T3}
T_3(x,t)=\frac{i\kappa_4}{6\sqrt{nb_n^3}}\int_0^t\sum_{(j,k)\in I_n}\mathbb{E}\{D_{jk}^3(U_{jk}(t_1)e_n^{\circ}(x))\}dt_1+T_{3}'(x,t),
\end{equation}
where the term $T_3'$ comes from the summation over $j=k$ and corresponds to the fact that the marginal distribution of the diagonal entries is different 
from the marginal distribution of the off-diagonal entries.
It follows form Lemma \ref{difb} that $T_3'$ can be bounded as
\begin{equation}
\label{T32bound}
|T_3'(x,t)|\leq\sqrt{\frac{n}{b_n^3}} C_4((b_n/n)^{1/2}x,t).
\end{equation}
By (\ref{dif}), the first term in (\ref{eqn:T3}) can be written as the sum of 
\begin{eqnarray}
\label{komar1}
&&T_{31}(x,t)=\frac{\kappa_4}{\sqrt{nb_n^3}}\int_0^t\sum_{(j,k)\in I_n}\mathbb{E}\{\beta_{jk}^3(U_{jj}*U_{jj}*U_{kk}*U_{kk})(t_1)e_n^{\circ}(x)\}dt_1 
\\
\label{komar2}
&&+\frac{i\kappa_4}{nb_n}\int_0^t\sum_{(j,k)\in I_n}
\mathbb{E}\{\beta_{jk}^3(U_{jj}*U_{kk})(t_1)xe_n(x)\int_{-\infty}^{\infty}t_2(U_{jj}*U_{kk})(t_2)\hat{\varphi}(t_2)dt_2\}dt_1,
\end{eqnarray}
and
\begin{eqnarray}
&&T_{32}(x,t)=\frac{\kappa_4}{\sqrt{nb_n^3}}\int_0^t\sum_{(j,k)\in I_n}
\mathbb{E}\{\beta_{jk}^3(6U_{jj}*U_{jk}*U_{jk}*U_{kk}+U_{jk}*U_{jk}*U_{jk}*U_{jk})(t_1)e_n^{\circ}(x)\}dt_1\nonumber\\
&&+\frac{i\kappa_4}{nb_n}\int_0^t\sum_{(j,k)\in I_n}
\mathbb{E}\{\beta_{jk}^3(6U_{jj}*U_{jk}*U_{kk}+2U_{jk}*U_{jk}*U_{jk})(t_1)xe_n(x)\int_{-\infty}^{\infty}t_2U_{jk}(t_2)
\hat{\varphi}(t_2)dt_2\}dt_1\nonumber\\
&&+\frac{i\kappa_4}{nb_n}\int_0^t\sum_{(j,k)\in I_n}\mathbb{E}\{\beta_{jk}^3(U_{jj}*U_{kk}+U_{jk}*U_{jk})(t_1)xe_n(x)
\int_{-\infty}^{\infty}t_2(U_{jk}*U_{jk})(t_2)\hat{\varphi}(t_2)dt_2\}dt_1\nonumber\\
&&+\frac{i\kappa_4}{nb_n}\int_0^t\sum_{(j,k)\in I_n}\mathbb{E}\{\beta_{jk}^3(U_{jk}*U_{jk})(t_1)xe_n(x)
\int_{-\infty}^{\infty}t_2(U_{jj}*U_{kk}+U_{jk}*U_{jk})(t_2)\hat{\varphi}(t_2)dt_2\}dt_1\nonumber\\
&&+\frac{i\kappa_4}{3nb_n}\int_0^t\sum_{(j,k)\in I_n}\mathbb{E}\{\beta_{jk}^3U_{jk}(t_1)xe_n(x)
\int_{-\infty}^{\infty}t_2(6U_{jj}*U_{jk}*U_{kk}+2U_{jk}*U_{jk}*U_{jk})(t_2)\hat{\varphi}(t_2)dt_2\}dt_1\nonumber\\
&&-\frac{2\kappa_4}{\sqrt{n^3b_n}}\int_0^t\sum_{(j,k)\in I_n}
\mathbb{E}\{\beta_{jk}^3(U_{jj}*U_{kk}+U_{jk}*U_{jk})(t_1)x^2e_n(x)(\int_{-\infty}^{\infty}t_2U_{jk}(t_2)\hat{\varphi}(t_2)dt_2)^2\}dt_1\nonumber\\
&&-\frac{2\kappa_4}{\sqrt{n^3b_n}}\int_0^t\sum_{(j,k)\in I_n}\mathbb{E}\{\beta_{jk}^3U_{jk}(t_1)x^2e_n(x)
\int_{-\infty}^{\infty}t_2U_{jk}(t_2)\hat{\varphi}(t_2)dt_2\int_{-\infty}^{\infty}t_3(U_{jj}*U_{kk}+U_{jk}*U_{jk})(t_3)
\hat{\varphi}(t_3)dt_3\}dt_1\nonumber\\
&&-\frac{4i\kappa_4}{3n^2}\int_0^t\sum_{(j,k)\in I_n}
\mathbb{E}\{\beta_{jk}^3U_{jk}(t_1)x^3e_n(x)(\int_{-\infty}^{\infty}t_2U_{jk}(t_2)\hat{\varphi}(t_2)dt_2)^3\}dt_1.
\label{eqn:T32}
\end{eqnarray}
Thus, $T_3=T_{31}+T_{32}+T_3'.\ $  Since we have already bounded $T_3'$ in (\ref{T32bound}), we are left with estimating the first two terms in the sum.
There are two types of sums over $(j,k)\in I_n$ in (\ref{eqn:T32}), namely the first one corresponding to $U_{jj}\*U_{jk}\*U_{jk}\*U_{kk}$ and 
the second one to $U_{jk}\*U_{jk}\*U_{jk}\*U_{jk}$. Define
\begin{eqnarray}
J_{1}(s_1,s_2,s_3,s_4)=\sum_{(j,k)\in I_n}U_{jj}(s_1)U_{jk}(s_2)U_{jk}(s_3)U_{kk}(s_4),\\
J_{2}(s_1,s_2,s_3,s_4)=\sum_{(j,k)\in I_n}U_{jk}(s_1)U_{jk}(s_2)U_{jk}(s_3)U_{jk}(s_4).
\end{eqnarray}
We note that
\begin{eqnarray}
|J_{1}|\leq\sum_{(j,k)\in I_n}|U_{jk}(s_2)U_{jk}(s_3)|\leq\left\{\sum_{j,k=1}^n|U_{jk}(s_2)|^2\sum_{j,k=1}^n|U_{jk}(s_3)|^2\right\}^{1/2}=n.
\end{eqnarray}
Similarly,
\begin{equation}
|J_{2}|\leq n.
\end{equation}
It follows from the last two inequalities and (\ref{ineq:fourier}) that
\begin{equation}
\label{t32xt}
|T_{32}(x,t)|\leq\sqrt{\frac{n}{b_n^3}}C_4((b_n/n)^{1/2}x,t). 
\end{equation}
Now, we estimate $T_{31}.$   Recall that $T_{31}$ is defined in
(\ref{komar1}-\ref{komar2}). Let us denote
\begin{equation}
\label{shah1}
v_n(s_1,s_2,s_3,s_4)=(nb_n)^{-1}\sum_{(j,k)\in I_n}U_{jj}(s_1)U_{jj}(s_2)U_{kk}(s_3)U_{kk}(s_4),
\end{equation}
and 
\begin{equation}
\label{shah2}
\bar{v}_n(s_1,s_2,s_3,s_4):=\mathbb{E}\{v_n(s_1,s_2,s_3,s_4)\}.
\end{equation}
%%%%%%%%%%%%%%%%%%%%%%%%%%%%
The rest of the proof of Proposition \ref{T3} follows from the next two lemmas.

\begin{lem}
\begin{equation}
\label{shah3}
\bar{v}_n(s_1,s_2,s_3,s_4)=2\bar{v}_n(s_1)\bar{v}_n(s_2)\bar{v}_n(s_3)\bar{v}_n(s_4)+h(s_1,s_2,s_3,s_4),
\end{equation}
where $\bar{v}_n(\cdot)$ is given by (\ref{vnt}) and
\begin{equation}
\label{shah4}
|h(s_1,s_2,s_3,s_4)|\leq \frac{6(|s_1|+|s_2|+|s_3|)}{\sqrt{m\*b_n}}+\frac{1}{b_n}.
\end{equation}
\end{lem}
%%%%%%%%%%%%%%%%%%%%%%%%%%%%%
\begin{proof}
\begin{eqnarray}
&&\mathbb{E}\{U_{jj}(s_1)U_{jj}(s_2)U_{kk}(s_3)U_{kk}(s_4)\}\nonumber=\mathbb{E}\{U_{jj}^{\circ}(s_1)U_{jj}(s_2)U_{kk}(s_3)U_{kk}(s_4)\}+
\mathbb{E}\{U_{jj}(s_1)\}\mathbb{E}\{U_{jj}^{\circ}(s_2)U_{kk}(s_3)U_{kk}(s_4)\}\nonumber\\
&&+\mathbb{E}\{U_{jj}(s_1)\}\mathbb{E}\{U_{jj}(s_2)\}\mathbb{E}\{U_{kk}^{\circ}(s_3)U_{kk}(s_4)\}+
\mathbb{E}\{U_{jj}(s_1)\}\mathbb{E}\{U_{jj}(s_2)\}\mathbb{E}\{U_{kk}(s_3)\}\mathbb{E}\{U_{kk}(s_4)\}.
\end{eqnarray}
It follows from (\ref{vukk}) that
\begin{eqnarray}
\lefteqn{|\mathbb{E}\{U_{jj}(s_1)U_{jj}(s_2)U_{kk}(s_3)U_{kk}(s_4)\} - \mathbb{E}\{U_{jj}(s_1)\}\mathbb{E}\{U_{jj}(s_2)\}\mathbb{E}\{U_{kk}(s_3)\}
\mathbb{E}\{U_{kk}(s_4)\}|}
\nonumber\\
&&\leq Var^{1/2}\{U_{jj}(s_1)\}+Var^{1/2}\{U_{jj}(s_2)\}+Var^{1/2}\{U_{kk}(s_3)\} \leq\frac{2(|s_1|+|s_2|+|s_3|)}{\sqrt{m\*b_n}}.
\end{eqnarray}
Therefore, we obtain
\begin{equation}
\label{mytischi1}
|\bar{v}_n(s_1,s_2,s_3,s_4)-(nb_n)^{-1}\sum_{(j,k)\in I_n}\mathbb{E}\{U_{jj}(s_1)\}\mathbb{E}\{U_{jj}(s_2)\}\mathbb{E}\{U_{kk}(s_3)\}
\mathbb{E}\{U_{kk}(s_4)\}|\leq \frac{6(|s_1|+|s_2|+|s_3|)}{\sqrt{mb_n}}.
\end{equation}
In addition,
\begin{equation}
\label{mytischi2}
(nb_n)^{-1}\sum_{(j,k)\in I_n}\mathbb{E}\{U_{jj}(a)\}\mathbb{E}\{U_{jj}(b)\}\mathbb{E}\{U_{kk}(c)\}
\mathbb{E}\{U_{kk}(d)\}=(2+1/b_n)\bar{v}_n(a)\bar{v}_n(b)\bar{v}_n(c)\bar{v}_n(d)
\end{equation}
Now the lemma follows from (\ref{mytischi1}-\ref{mytischi2}) and 
$\bar{v}_n(t)\leq 1.$ 
\end{proof}
%%%%%%%%%%%%%%%%%%%%%%%%%%%%%%%%
The next lemma deals with $T_{31}$ defined in (\ref{komar1}-\ref{komar2}).
\begin{lem}
\label{mmm}
\begin{equation}
T_{31}=2i\kappa_4xZ_n(x)\int_0^t\int_{-\infty}^{\infty}t_2\bar{v}_n*\bar{v}_n(t_1)\bar{v}_n*\bar{v}_n(t_2)\hat{\varphi}(t_2)dt_2dt_1+\delta_n(x,t),
\end{equation}
where
\begin{equation}
\label{dddd}
\delta_n(x,t)\to 0
\end{equation}
uniformly on any bounded subset of 
$\{(x,t),t\geq0\}.$
\end{lem}
%%%%%%%%%%%%%%%%%%%%%%%%%%%%%%%%
\begin{proof}
$T_{31}$ can be written as
\begin{eqnarray}
\label{komar11}
T_{31}(x,t)&=&\frac{\kappa_4}{\sqrt{nb_n^3}}\int_0^t\sum_{(j,k)\in I_n}\mathbb{E}\{(U_{jj}*U_{jj}*U_{kk}*U_{kk})(t_1)e_n^{\circ}(x)\}dt_1 \nonumber\\
%\label{komar21}
&&+\frac{i\kappa_4}{nb_n}\int_0^t\sum_{(j,k)\in I_n}
\mathbb{E}\{(U_{jj}*U_{kk})(t_1)xe_n^{\circ}(x)\int_{-\infty}^{\infty}t_2(U_{jj}*U_{kk})(t_2)\hat{\varphi}(t_2)dt_2\}dt_1\nonumber\\
&&+\frac{i\kappa_4}{nb_n}\int_0^t\sum_{(j,k)\in I_n}
\mathbb{E}\{(U_{jj}*U_{kk})(t_1)x\mathbb{E}\{e_n(x)\}\int_{-\infty}^{\infty}t_2(U_{jj}*U_{kk})(t_2)\hat{\varphi}(t_2)dt_2\}dt_1
%\label{komar31}
\nonumber\\
&&+T_{31}'(x,t)\nonumber\\
&=:&T_{311}+T_{312}+T_{313}+T_{31}'(x,t).
\end{eqnarray}
where $T_{31}'$ comes from the diagonal terms, so $|T_{31}'|\leq\sqrt{\frac{n}{b_n^3}}C_4(\sqrt{b_n/n}x,t)$.
Then
\begin{eqnarray}
\label{t311}
T_{311}&=&\frac{\kappa_4\sqrt{n}}{\sqrt{b_n}}\int_0^t\int_0^{t_1}\int_0^{t_2}\int_0^{t_3}\mathbb{E}\{v_n(t_1-t_2,t_2-t_3,t_3-t_4,t_4)
e_n^{\circ}(x)\}dt_4dt_3dt_2dt_1, \\
\label{t312}
T_{312}&=&i\*\kappa_4x\*\int_0^t\int_{-\infty}^{\infty}\int_0^{t_1}\int_0^{t_2}t_2\mathbb{E}\{v_n(t_3,t_4,t_1-t_3,t_2-t_4)
e_n^{\circ}(x)\}\hat{\varphi}(t_2)dt_4 dt_3dt_2 dt_1,\\
T_{313}&=&i\*\kappa_4x\*Z_n(x)\int_0^t\int_{-\infty}^{\infty}\int_0^{t_1}\int_0^{t_2}t_2\bar{v}_n(t_3,t_4,t_1-t_3,t_2-t_4)
\hat{\varphi}(t_2)dt_4 dt_3 dt_2dt_1.
\label{t313}
\end{eqnarray}
Moreover, by Lemma \ref{mmm},
\begin{eqnarray}
 T_{313}&=&2\*i\kappa_4xZ_n(x)\int_0^t\int_{-\infty}^{\infty}
\int_0^{t_1}\int_0^{t_2}t_2 \bar{v}_n(t_3)\bar{v}_n(t_4)\bar{v}_n(t_1-t_3)\bar{v}_n(t_2-t_4)\hat{\varphi}(t_2)dt_4 dt_3 dt_2dt_1 +\tau_n(x,t) \nonumber\\
&=&2\* i\kappa_4xZ_n(x)\int_0^t\int_{-\infty}^{\infty}t_2\bar{v}_n*\bar{v}_n(t_1)\bar{v}_n*\bar{v}_n(t_2)
\hat{\varphi}(t_2)dt_2dt_1+\tau_n(x,t),
\end{eqnarray}
where
\[\tau_n(x,t)=i\kappa_4xZ_n(x)\int_0^t\int_{-\infty}^{\infty}\int_0^{t_1}\int_0^{t_2}t_2\*h(t_3,t_4,t_1-t_3,t_2-t_4)
\hat{\varphi}(t_2)dt_4 dt_3 dt_2dt_1.\]
%and $h(a,b,c,d)$ is defined in (\ref{shah1}-\ref{shah4}).
Thus,
\begin{eqnarray}
|\tau_n(x,t)|&\leq& |\kappa_4xZ_n(x)|\int_0^t\int_{-\infty}^{\infty}
\int_0^{t_1}\left|\int_0^{t_2}|t_2\*h(t_3,t_4,t_1-t_3,t_2-t_4)|\hat{\varphi}(t_2)dt_4\right| dt_3 dt_2dt_1\nonumber\\
&\leq&\frac{|x|}{\sqrt{b_n}}C(t^4+1).
\end{eqnarray}
Since $|e_n^{\circ}(x)|\leq2$,
\begin{eqnarray}
|\mathbb{E}\{v_n(s_1, s_2, s_3, s_4)e_n^{\circ}\}|&\leq&2\mathbb{E}\{|v_n^{\circ}(s_1,s_2,s_3,s_4)|\}\leq\frac{2}{nb_n}
\sum_{(j,k)\in I_n}\mathbb{E}\{|(U_{jj}(s_1)U_{jj}(s_2)U_{kk}(s_3)U_{kk}(s_4))^{\circ}|\}\nonumber\\
&\leq&\frac{4}{nb_n}\sum_{(j,k)\in I_n}\mathbb{E}\{|(U_{jj}(s_1)U_{jj}(s_2))^{\circ}|\}+\mathbb{E}\{|(U_{kk}(s_3)U_{kk}(s_4))^{\circ}|\}.
\end{eqnarray}
Also,
\begin{eqnarray}
Var\{U_{jj}(s_1)U_{jj}(s_2)\}&\leq&\frac{4}{mb_n}
\sum_{(p,q)\in I_n^+}\mathbb{E}\{|U_{jp}*U_{jq}(s_1)U_{jj}(s_2)+U_{jj}(s_1)U_{jp}*U_{jq}(s_2)|^2\}\nonumber\\
&\leq&\frac{8}{mb_n}\sum_{(p,q)\in I_n}\mathbb{E}\{|U_{jp}*U_{jq}(s_1)U_{jj}(s_2)|^2\}+\mathbb{E}\{U_{jj}(s_1)U_{jp}*U_{jq}(s_2)|^2\},
\end{eqnarray}
and
\begin{eqnarray}
\sum_{(p,q)\in I_n}\mathbb{E}\{|U_{jp}*U_{jq}(s_1)U_{jj}(s_2)|^2\}
&=&\mathbb{E}\{\sum_{(p,q)\in I_n}|\int_0^aU_{jp}(s)U_{jq}(s_1-s)U_{jj}(s_2)ds|^2\}\nonumber\\
&\leq& |s_1|\*  \*\mathbb{E}\{\sum_{(p,q)\in I_n}\int_0^{s_1}|U_{jp}(s)U_{jq}(s_1-s)|^2ds\}\leq s_1^2.
\end{eqnarray}
Thus,
\begin{equation}
Var\{U_{jj}(s_1)U_{jj}(s_2)\}\leq\frac{8(s_1^2+s_2^2)}{mb_n},
\end{equation}
and we obtain
\begin{equation}
|\mathbb{E}\{v_n(s_1,s_2,s_3,s_4)e_n^{\circ}\}|\leq 12\*\sqrt{\frac{8(s_1^2+s_2^2)}{mb_n}}+12\sqrt{\frac{8(s_3^2+s_4^2)}{mb_n}}=\frac{C}{\sqrt{mb_n}}
(\sqrt{s_1^2+s_2^2}+\sqrt{s_3^2+s_4^2}).
\end{equation}
So 
\begin{equation}
|T_{311}|\leq\frac{C|\kappa_4|\sqrt{n}}{\sqrt{m}b_n}\int_0^t\int_0^{t_1}\int_0^{t_2}\int_0^{t_3}\sqrt{(t_1-t_2)^2+(t_2-t_3)^2}+\sqrt{(t_3-t_4)^2+t_4^2}
\*dt_4dt_3dt_2dt_1\leq\frac{C\sqrt{n}}{b_n}|t|^5,
\end{equation}
and 
\begin{equation}
|T_{312}|\leq\frac{C|\kappa_4x|}{\sqrt{mb_n}}\int_0^t\int_{-\infty}^{\infty}\int_0^{t_1}|\int_0^{t_2}|t_2|(
\sqrt{t_3^2+t_4^2}+\sqrt{(t_1-t_3)^2+(t_2-t_4)^2})|\hat{\varphi}(t_2)|dt_4| dt_3dt_2 dt_1\leq\frac{|x|}{\sqrt{b_n}}C(|t|^3+1).
\end{equation}
Since $n/b_n^2\to0$, we observe that $\delta=T_{311}+T_{312}+\tau_n+T_{31}'$ goes to zero uniformly on any bounded subset of $\{t\geq0,x\in\mathbb{R}\}$.
This finishes the proof of Lemma \ref{mmm}.
\end{proof}
Now, we are ready to finish the proof of Proposition \ref{T3}. Indeed,
\begin{eqnarray}
T_3&=&T_{31}+T_{32}+T_3'\nonumber\\
&=&2i\kappa_4xZ_n(x)\int_0^t\int_{-\infty}^{\infty}t_2\bar{v}_n*\bar{v}_n(t_1)\bar{v}_n*\bar{v}_n(t_2)\hat{\varphi}(t_2)dt_2dt_1+\delta_n(x,t)+T_{32}+T_3'.
\end{eqnarray}
The statement of Proposition \ref{T3} now follows from (\ref{T32bound}), (\ref{t32xt}), and (\ref{dddd}).
\end{proof}
To finish the proof of Proposition \ref{ynprop}, we observe that the equation (\ref{equationyn}) follows from (\ref{urav}), (\ref{uravn}), and 
Propositions \ref{T1}-\ref{T3}. Proposition \ref{ynprop} is proven.
\end{proof}

%%%%%%%%%%%%%%%%%%%%

\subsection{The limit of $A_n$}
\label{an}
In this subsection, we study the limit of $A_n(t)$ as $n\to \infty.$ This, in turn, will allow us to study the limiting behavior of $Y_n(x,t).$ 
The main result of subsection is the following proposition.

\begin{prop}
\label{propan}
Let $A_n(t)$ be as defined in (\ref{An}).  Then the limit of $A_n(t)$ as $n\to \infty$ exists and 
\begin{eqnarray}
\label{atat}
A(t)&:=&\lim_{n\to \infty} A_n(t)\\
&=& -2\sigma^2\int_0^t \frac{1}{8\pi^3\sigma^2}\int_{-2\sqrt{2}\sigma}^{2\sqrt{2}\sigma}\int_{-2\sqrt{2}\sigma}^{2\sqrt{2}\sigma}e^{it_1x}\*\varphi'(y)\*
\sqrt{8\sigma^2-x^2}\*\sqrt{8\sigma^2-y^2}F_{\sigma}(x,y)\*1_{\{x\not=y\}} \*dxdydt_1, \nonumber
\end{eqnarray}
where for $x\not=y$
\begin{eqnarray}
F_{\sigma}(x,y)&=&\frac{\pi}{2\*\sigma^2}\sum_{k=0}^{\infty}U_k(x)U_k(y)\gamma_k\\
&=&\int_{-\infty}^{\infty}\frac{\frac{\sin^3 s}{s^3}-\frac{\sin s}{s}}{2\sigma^2
\left(1-\frac{\sin^2 s}{s^2}\right)^2-\left(\frac{\sin s}{s}+\frac{\sin^3 s}{s^3}\right)xy+\frac{\sin^2 s}{s^2}(x^2+y^2)}ds.
\end{eqnarray}
\end{prop}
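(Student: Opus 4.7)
The plan is to reduce Proposition \ref{propan} to computing the large-$n$ limit of a two-point correlator for $U(t)=e^{itM}$, and then identify that limit via the cyclic translation symmetry of the band ensemble.

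\emph{Fourier reduction.} Using $\varphi'_{jk}(M) = i\int t_2 U_{jk}(t_2)\hat\varphi(t_2)\,dt_2$ one rewrites
\[
A_n(t) = -\frac{2i\sigma^2}{n}\int_0^t\int_{-\infty}^\infty t_2\hat\varphi(t_2)\Sigma_n(t_1,t_2)\,dt_2\,dt_1,\quad \Sigma_n(t_1,t_2) := \sum_{(j,k)\in I_n}\mathbb{E}\{U_{jk}(t_1)U_{jk}(t_2)\}.
\]
The joint law of $\{W_{jk}\}$ is invariant under the cyclic shift $(j,k)\mapsto(j+1,k+1)\pmod n$, so $\mathbb{E}\{U_{jk}(t_1)U_{jk}(t_2)\}$ depends only on $d=d_n(j,k)$, and
\[
\frac{1}{n}\Sigma_n(t_1,t_2)=\sum_{|d|\leq b_n}g_n(d;t_1,t_2),\qquad g_n(d;t_1,t_2):=\mathbb{E}\{U_{1,1+d}(t_1)U_{1,1+d}(t_2)\}.
\]
This is where the band structure bites: in the Wigner (full-matrix) case one would instead get the deterministic identity $\sum_{j,k}U_{jk}(t_1)U_{jk}(t_2) = \Tr(U(t_1+t_2))$, a function of $t_1+t_2$ alone, whereas the band restriction forces a genuine two-variable limit.

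\emph{Limit of the two-point function.} Expanding $U_{1,1+d}(t)=\sum_{p\geq 0}(it)^p(M^p)_{1,1+d}/p!$ and applying the moment/diagrammatic calculus for band matrices (cf.\ \cite{semicirclelawband1}, \cite{semicirclelawband2}), only non-crossing pair partitions of the $p+q$ steps in the joint moment survive to leading order, each pair carrying a variance $\sigma^2/b_n$ together with a displacement constraint in $\{-b_n,\dots,b_n\}$. Rescaling the Fourier variable along the band so that the characteristic function of the uniform distribution on $[-b_n,b_n]$ tends to $\sin s/s$, and re-grouping the sum over partition shapes and over $d$, one arrives at
\[
\lim_{n\to\infty}\frac{1}{n}\Sigma_n(t_1,t_2)=\iint e^{it_1x+it_2y}\rho(x,y)\,dx\,dy,\quad \rho(x,y):=\frac{\sqrt{8\sigma^2-x^2}\sqrt{8\sigma^2-y^2}F_\sigma(x,y)\mathbf{1}_{\{x\neq y\}}}{8\pi^3\sigma^2}.
\]
The series representation $F_\sigma(x,y)=\frac{\pi}{2\sigma^2}\sum_{k\geq 0}U_k(x)U_k(y)\gamma_k$ appears because $\int e^{it\lambda}\sqrt{8\sigma^2-\lambda^2}\,d\lambda$ is a generating function for Chebyshev polynomials of the second kind, while $\gamma_k$ records the contribution of the $k$ pair-edges weighted by the $\sin s/s$ kernel coming from the band. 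The closed-form integral for $F_\sigma$ then follows by resumming this Chebyshev series via a geometric generating-function identity in which $(\sin s/s)^2$ plays the role of the variable.

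\emph{Conclusion and main obstacle.} Substituting the limit of $n^{-1}\Sigma_n$ back into the Fourier reduction and using $\int t_2 e^{it_2 y}\hat\varphi(t_2)\,dt_2=-i\varphi'(y)$ produces the claimed expression for $A(t)$. The hard part is the middle step: showing rigorously that non-crossing pair partitions continue to dominate uniformly in the band regime (the condition $b_n\gg\sqrt n$ is exactly what is needed to kill the next order of diagrams), that the band displacement constraint factors cleanly into one $\sin s/s$ kernel per pair after rescaling, and that the resulting combinatorial sum resums into $F_\sigma$. Once this is in place, the identification with the formula of the proposition and the equivalence of the series and integral representations of $F_\sigma$ are direct generating-function manipulations; edge contributions from $(j,k)$ within distance $b_n$ of $1$ or $n$ are $O(b_n/n)=o(1)$ and do not affect the limit.
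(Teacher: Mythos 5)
Your overall skeleton is the right one, and it matches the mechanism the paper actually exploits: reduce $A_n$ to the quantity $n^{-1}\sum_{(j,k)\in I_n}\mathbb{E}\{f(M)_{jk}g(M)_{jk}\}$ (the paper packages this as a bilinear form $\langle f,g\rangle_n$ and writes $A_n(t)=-2\sigma^2\int_0^t\langle e^{it_1x},\varphi'\rangle_n\,dt_1$), observe that the band restriction on the index pair $(j,k)$ is what changes the answer relative to the full Wigner case, and see the kernel $F_\sigma$ emerge from uniform displacement variables whose characteristic function is $\sin s/s$. But the step you label ``the hard part'' is not a technical afterthought; it is the entire content of the proposition, and your proposal does not prove it. Concretely, you never establish (i) that the limit of the two-point object exists, (ii) what it equals, or (iii) that the resummation into $F_\sigma$ is legitimate. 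In the paper these are Lemma \ref{lemma34} and Lemma \ref{lemma35}: for fixed monomials one counts closed paths with every edge repeated twice, and the extra constraint $(i_l,i_0)\in I_n$ weights each Dyck path by the probability $\gamma_k=\mathbb{P}(|T_1+\cdots+T_k|\le 1/2)$, where $k=s(l)$ is the height of the Dyck path at the split point and the $T_i$ are i.i.d.\ uniform; the explicit count of Dyck paths with $s(l)=k$ and a Chu--Vandermonde evaluation then show that the rescaled Chebyshev polynomials are orthogonal for the limiting form with $\langle U_k,U_k\rangle=\gamma_k$, which is what produces $F_\sigma(x,y)=\frac{\pi}{2\sigma^2}\sum_k U_k(x)U_k(y)\gamma_k$ and, via $\gamma_k=\frac1\pi\int(\sin s/s)^{k+1}ds$ and summation of the resulting series, the closed integral formula. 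None of this computation appears in your proposal; it is asserted.

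Moreover, the route you sketch is analytically harder than the one the paper takes, and the missing uniformity is exactly where it would fail as written. You expand $U_{jk}(t)=e^{itM}_{jk}$ into its full power series and invoke dominance of non-crossing pairings; to conclude you would need diagram bounds uniform in the order $p+q$ of the expansion (to interchange the $n\to\infty$ limit with the infinite series), plus a treatment of the divergence of $\sum_k U_k(x)U_k(y)\gamma_k$ on the diagonal $x=y$ (since $\gamma_k\asymp k^{-1/2}$, the kernel is singular, and the paper needs the $L^1$/convolution argument at the end of Subsection \ref{an} to make sense of the integral representation). The paper avoids the uniformity issue entirely: it computes $\langle x^l,x^m\rangle_n$ for fixed $l,m$, and passes to $e^{it_1x}$ and $\varphi'$ by Stone--Weierstrass approximation combined with the Cauchy--Schwarz inequality for the positive semidefinite form $\langle\cdot,\cdot\rangle_n$ and the semicircle law to control the region $|x|>B$. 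Finally, your remark that $b_n\gg\sqrt n$ is ``exactly what is needed'' here is off: for the convergence of $A_n$ only $b_n\to\infty$ and $b_n/n\to0$ enter (corrections are $O(1/b_n)$); the condition $\sqrt n\ll b_n$ is used elsewhere in the proof of Theorem \ref{thm-Poin} to kill error terms such as $\mathscr{E}_3$ and $T_{11}$, not in Proposition \ref{propan}.
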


\begin{proof}
We recall that $A_n(t)$ is defined in (\ref{An}) as $A_n(t)=-\frac{2\sigma^2}{n}\*\int_0^t\sum_{(j,k)\in I_n}\mathbb{E}\{U_{jk}(t_1)\varphi'(M)_{jk}\}dt_1.$ In the full Wigner matrix case, one has
$A_n=-\frac{2\sigma^2}{n}\*\int_0^t Tr [e^{it_1\*M} \* \varphi(M)] \* dt_1,$
and the limiting behavior of $A_n$ immediately follows from the Wigner semi-circle law.
In the band matrix case, there are additional difficulties due to the fact that the summation in the formula for $A_n$ is restricted  to
the band entries, i.e. to $(j,k)\in I_n.$

We start with the definition of a bilinear form on $C_b(\mathbb{R}),$  the space of bounded continuous functions on $\mathbb{R}.$
\begin{defn}
Let $f,g\in C_b(\mathbb{R})$. Define
\begin{equation}
<f,g>_n:=n^{-1}\mathbb{E}\{\sum_{(j,k)\in I_n}f(M)_{jk}\overline{g(M)_{jk}}\}.
\label{bilinear}
\end{equation}
\end{defn}
It follows from the above definition that
\begin{equation}A_n(t)=-2\sigma^2\int_0^t<e^{it_1x},\varphi'(x)>_ndt_1.
\end{equation}
The bilinear form (\ref{bilinear}) is an inner product (perhaps, degenerate). \\
(1)$ <f,f>_n\geq 0,$\\
(2)$<f,g>_n=\overline{<g,f>_n},$\\
(3)$<f,g_1+g_2>_n=<f,g_1>_n+<f,g_2>_n,\ \ <kf,g>_n=k<f,g>_n,\ k\in\mathbb{R},$\\
(4)(Cauchy-Schwarz Inequality) $|<f,g>_n|\leq <f,f>_n^{1/2}\*<g,g>_n^{1/2}.$\\

The proof of Proposition \ref{propan} relies on two auxiliary lemmas.
\begin{lem}
\label{lemma34}
For all $f,g\in C_b(\mathbb{R})$ the limit
\begin{equation}
\label{fgexists}
<f,g>:=\lim_{n\to\infty}<f,g>_n
\end{equation}
exists.
\end{lem}
\begin{proof}
We start with monomials. While monomials do not belong to $C_b(\mathbb{R})$, the expression (\ref{bilinear}) still makes sense since all moments of the 
matrix entries
of $M$ are finite.
For $l,m\in\mathbb{N},$ consider $f(x)=x^l, \ g(x)=x^m.$ Then
\begin{equation}
\label{dynamok}
<x^l,x^m>_n=\frac{1}{nb_n^{(l+m)/2}}\sum_{(i_0,i_1),...,(i_{l+m-1},i_0),(i_l,i_0)\in I_n}\mathbb{E}\{W_{i_0i_1}...W_{i_{l+m-1}i_0}\}.
\end{equation}
Let us fix $i_0\in\{1,...,n\}$. For $k=1,...,l+m,$ define
\begin{equation}
x_k=\left\{\begin{array}{lll}i_k-i_{k-1}&\mbox{if } |i_k-i_{k-1}|\leq b_n,\\i_k-i_{k-1}-n&\mbox{if } 
i_k-i_{k-1}>b_n,\\n+(i_k-i_{k-1})&\mbox{if } i_k-i_{k-1}<-b_n, 
\end{array}\right.
\end{equation}
where $i_{l+m}=i_0$. Since $l,m$ are fixed and $n/b_n\to\infty$, for sufficiently large $n$ the restriction 
$(i_0,i_1)$, $...$, $(i_{l+m-1},i_0)$, $(i_l,i_0)\in I_n$ is equivalent to $|x_1|,...,|x_{l+m}|\leq b_n, \ x_1+...+x_{l+m}=0,$ 
and $|x_1+...+x_l|\leq b_n$. 
Therefore, for sufficiently large $n,$
\begin{eqnarray}
<x^l,x^m>_n=\frac{1}{nb_n^{(l+m)/2}}\sum_{i_0=1}^n\sum_{\begin{array}{cccc}|x_1|,...,|x_{l+m}|\leq b_n\\x_1+x_2+...+x_{l+m}=0\\|x_1+x_2+...+x_l|
\leq b_n\end{array}}\mathbb{E}\{W_{i_0,i_1}...W_{i_{l+m-1},i_0}\}.
\end{eqnarray}
Each $(i_0,i_1,...,i_{l+m-1},i_0)$ is a closed path such that the distance between the endpoints of each edge is bounded by $b_n,$ 
and, in addition, the distance between $i_0$ and $i_l$ is also bounded by $b_n.$  
If $l+m$ is odd, one can show that $<x^l,x^m>_n\to0$ using power counting and independence of matrix entries. The proof is very similar 
to the combinatorial argument in the proof of the Wigner semicircle law and is left to the reader.

Now consider the case when $l+m$ is even.  Without loss of generality we can assume that $l\leq m$. 
As in the proof of the semicircle law, only the paths where every edge appears exactly twice 
contribute to the limit. For each such path, 
$$\mathbb{E}\{W_{i_0,i_1}...W_{i_{l+m-1},i_0}\}=\sigma^{l+m}.$$ 
Moreover, each such $(i_0,i_1,...,i_{l+m-1},i_0)$ corresponds to a Dyck path of length $l+m$ (see e.g. \cite{AGZ}).
Recall that a Dyck path $(s(0),...,s(l+m))$ of length $m+l$ satisfies
$$s(0)=s(l+m)=0,\ s(1),...,s(l+m-1)\geq 0, \ \text{and} \ |s(t+1)-s(t)|=1,\ i=0,...,l+m-1.$$
Specifically, $s(t+1)-s(t)=1$ if the non-oriented edge $(i_t, i_{t+1})$ appears in $(i_0,i_1,...,i_{l+m-1},i_0)$ for the first time and 
$s(t+1)-s(t)=-1$ if the edge $(i_t, i_{t+1})$ appears in $(i_0,i_1,...,i_{l+m-1},i_0)$ for the second time.

If  one removes in (\ref{dynamok}) the condition that $(i_l, i_0) \in I_n$ then the l.h.s. in (\ref{dynamok})
becomes $\frac{1}{n}\* Tr M^{l+m}$ and
each Dyck path gives equal contribution in the limit $n \to \infty.$  However, we have to take into account the condition $(i_l, i_0) \in I_n.$  As a 
result, the combinatorial analysis becomes more involved. 
Suppose $s(l)=k, \ 0\leq k\leq l$. Then during the first $l$ steps of the path $(i_0,i_1,...,i_{l+m-1},i_0),$ $(l-k)/2$ edges appear twice and $k$ 
edges appear only once. For each of the edges appearing twice, the corresponding two numbers $x_i$ have the same absolute value but differ in 
sign.  The remaining $k$ numbers $x_i$ will be renumerated (in the order of their appearance) by $y_1, y_2, \ldots y_k.$ 
One obtains
\begin{eqnarray}
& &<x^l,x^m>_n= \frac{\sigma^{l+m}}{b_n^{l+m}}\* \sum_{k=0}^l  \* \#\{ {\text Dyck} \ {\text paths} \ {\text of}\  {\text length}
\ l+m \ {\text with} \ s(l)=k \} \nonumber \\                                                                       
& &\times \#\mbox{of integeres}\{|y_1|\leq b_n, \ldots,|y_{k}|\leq b_n, \ldots, |y_{l+m}|\leq b_n, |y_1+...+y_k|\leq b_n\} \nonumber \\
& & +O(b_n^{-1}).
\end{eqnarray}

Therefore, $<x^l,x^m>=\lim_{n\to \infty} <x^l,x^m>_n$ exists, and
\begin{eqnarray}
& & <x^l,x^m>=   (\sqrt{2}\sigma)^{l+m}\sum_{k=0}^l
\* \#\{ {\text Dyck} \ {\text paths} \ {\text of}\  {\text length}
\ l+m \ {\text with} \ s(l)=k \} \nonumber \\ 
& & \times Vol\{|t_1|\leq 1/2, |t_2|\leq 1/2,\ldots,|t_{(l+m)/2}|\leq 1/2,|t_1+t_2+...+t_k|\leq 1/2\}\}.
\end{eqnarray}
The number of Dyck paths with $s(l)=k$ is 
\begin{equation}
\left[\binom{l}{\frac{l+k}{2}}-\binom{l}{\frac{l+k+2}{2}}\right]\left[\binom{m}{\frac{m+k}{2}}-\binom{m}{\frac{m+k+2}{2}}\right]=
\frac{(k+1)^2}{(l+1)(m+1)}\binom{l+1}{\frac{l+k+2}{2}}\binom{m+1}{\frac{m+k+2}{2}}.
\end{equation}
Let $T_1,...,T_{(l+m)/2}$ be i.i.d random variables uniformly distributed on $[-1/2,1/2]$. Then
\begin{equation}
Vol\{|t_1|\leq 1/2,...,|t_k|\leq 1/2,|t_1+t_2+...+t_k|\leq 1/2\}=\mathbb{P}(|T_1+T_2+...+T_k|\leq1/2).
\end{equation}
Let $S_k=T_1+...+T_k$. Then the characteristic function of $S_k$ is
\begin{equation}
\mathbb{E}\{e^{ixS_k}\}=(\mathbb{E}\{e^{ixY_1}\})^k=\left(\frac{\sin x/2}{x/2}\right)^k.
\end{equation}
Hence, the density function of $S_k$ is  given by
\begin{equation}
f_k(s)=\frac{1}{2\pi}\int_{-\infty}^{\infty} e^{-ixs}\left(\frac{\sin x/2}{x/2}\right)^kdx.
\end{equation}
Define $\gamma_k:=\mathbb{P}(|S_k|\leq 1/2)$. Then
\begin{equation}
\gamma_k=\int_{-1/2}^{1/2}f_k(s)ds=\frac{1}{2\pi}\int_{-\infty}^{\infty} \left(\frac{\sin x/2}{x/2}\right)^{k+1}dx=
\frac{1}{\pi}\int_{-\infty}^{\infty} \left(\frac{\sin x}{x}\right)^{k+1}\ dx=
f_{k+1}(0).
\end{equation}
The exact formula for $\gamma_k$ is well known (see e.g. \cite{methods}):
\begin{equation}\gamma_k=\left\{\begin{array}{cc}[(2t)!]^{-1}\sum_{s=0}^t(-1)^s\binom{2t+1}{s}(t-s+1/2)^{2t},&\mbox{ if } k=2t,\\
{[(2t+1)!]^{-1}}\sum_{s=0}^t(-1)^s\binom{2t+2}{s}(t-s+1)^{2t+1},&\mbox{ if }k=2t+1.\end{array}\right.\label{gamma}
\end{equation}
Therefore, we conclude that 
\begin{equation}
<x^l,x^m>=(\sqrt{2}\sigma)^{m+l}C_{l,m},
\label{bilinearform}
\end{equation}
where $C_{l,m}$ is defined in the following way.
For $l+m$ is odd, $C_{l,m}=0.$ For $l+m$ is even, $l\leq m$, 
\begin{eqnarray}
C_{l,m}=\left\{\begin{array}{ll}
[(l+1)(m+1)]^{-1}\sum_{k=0}^{l/2}(2k+1)^2\binom{l+1}{\frac{l-2k}{2}}\binom{m+1}{\frac{m-2k}{2}}\*\gamma_{2k},&\mbox{if } l \mbox{ is even,}\\
{[(l+1)(m+1)]^{-1}}\sum_{k=0}^{(l-1)/2}(2k+2)^2\binom{l+1}{\frac{l-2k-1}{2}}\binom{m+1}{\frac{m-2k-1}{2}}\gamma_{2k+1},&\mbox{if } l \mbox{ is odd.}
\end{array}\right.
\end{eqnarray}
For $l+m$ is even, $l> m$, 
$C_{l,m}=C_{m,l}.$ 
It follows from the definition that
$0\leq C_{l,m}\leq C_{\frac{l+m}{2}}$, 
where $C_s=\frac{(2s)!}{s!\*(s+1)!}$ is the Catalan number.

If $f,g$ are polynomials, $f(x)=\sum_{i=0}^pa_ix^i, g(x)=\sum_{j=0}^qb_jx^j$, then by linearity
\begin{equation}
<f,g>=\sum_{i=0}^p\sum_{j=0}^qa_ib_j(\sqrt{2}\sigma)^{i+j}C_{i,j}.
\end{equation}
Thus, the result of Lemma \ref{lemma34} holds when $f$ and $g$ are arbitrary polynomials.

For general bounded continuous functions $f,g$, we will show that $\{<f,g>_n\}$ is a Cauchy sequence.
To this end, we choose a sufficiently large $B$ independent of $n$ (it will be enough to take $B=4\sigma^3+1$ ).
Fix $\delta>0$. By the Stone-Weierstrass theorem, there exist polynomials $f_{\delta},g_{\delta}$ such that 
\begin{equation}
\sup_{x:|x|\leq B+1}|f(x)-f_{\delta}(x)|\leq\delta,\sup_{x:|x|\leq B+1}|g(x)-g_{\delta}(x)|\leq\delta.
\end{equation} 
Let $h$ be an infinitely differentiable function such that $ |h|\leq 1, \ \ h(x)=1$ for $|x|\leq B, \ \ h(x)=0$ for $|x|\geq B+1.$
We write
\begin{equation}
<f,g>_n=<f-f_{\delta}\*h,g-g_{\delta}\*h>_n+<f-f_{\delta}\*h,g_{\delta}\*h>_n+<f_{\delta},g-g_{\delta}\*h>_n+<f_{\delta}\*h,g_{\delta}\*h>_n.
\label{fgn}
\end{equation}
Below we show that the first three terms on the r.h.s. of (\ref{fgn}) are small provided $\delta$ is small.
It follows from (\ref{bilinear}) that
\begin{eqnarray}
\label{friday1}
& & <(f-f_{\delta})\*h, (f-f_{\delta})\* h>_n \leq \delta^2, \\
\label{friday2}
& & <(g-g_{\delta})\* h, (g-g_{\delta})\* h>_n \leq \delta^2.
\end{eqnarray}
Since $f, g$ are bounded on $\mathbb{R}$ and $f_{\delta}, g_{\delta} $ are polynomials, there exists sufficiently large 
$N\in\mathbb{Z}_+$, 
such that $(f-f_{\delta})^2 \* (1-h)\leq x^{2N} \* (1-h),$ 
and $(g-g_{\delta})^2 \* (1-h)\leq x^{2N} \* (1-h).$ 
Then
\begin{eqnarray}
& & <(f-f_{\delta})\* (1-h), (f-f_{\delta})\* (1-h)>_n \leq <x^{2N} \* (1-h), x^{2N} \* (1-h)>_n \nonumber \\
\label{friday3}
& & \leq \E \frac{1}{n\*B^{2N}} Tr M^{6N} \leq \delta^2
\end{eqnarray}
for sufficiently large $n,$ where the last inequality follows from the semicircle law provided $N$ is chosen so that 
$\frac{(\sqrt{2}\*\sigma)^{6N}}{B^{2N}}<\delta^2.$
In a similar fashion,
\begin{eqnarray}
\label{friday4}
& & <(g-g_{\delta})\* (1-h), (g-g_{\delta})\* (1-h)>_n \leq \delta^2, \\
\label{friday44}
& & <f\* (1-h), f\* (1-h)>_n \leq \delta^2, \ \ <g\* (1-h), g\* (1-h)>_n \leq \delta^2.
\end{eqnarray}
for sufficiently large $n.$  
The bounds (\ref{friday1}-\ref{friday44}) imply
\begin{eqnarray}
\label{friday5}
& & <f-f_{\delta}\*h, f-f_{\delta}\*h>_n \leq const\*\delta^2, \\
\label{friday6}
& & <g-g_{\delta}\*h, g-g_{\delta}\*h>_n \leq const\*\delta^2.
\end{eqnarray}
Now, applying (\ref{friday5}-\ref{friday6}) and the Cauchy-Schwarz inequality,
we obtain
\begin{eqnarray}
& & |<f,g>_n- <f_{\delta}\*h,g_{\delta}\*h>_n|\leq Const \*\delta, \nonumber \\
& & |<f_{\delta},g_{\delta}>_n- <f_{\delta}\*h,g_{\delta}\*h>_n|\leq Const \*\delta, \nonumber
\end{eqnarray}
and, as a result,
\begin{equation}
\label{fgn1}
|<f,g>_n- <f_{\delta},g_{\delta}>_n|\leq 2\* Const \*\delta.
\end{equation}
Therefore $<f,g>_n$ is a Cauchy sequence and $<f,g>$ exists. 
\end{proof}
In the next lemma, we diagonalize the bilinear form $<f,g>.$
\begin{lem}
\label{lemma35}
Let $\{U_n(x)\}$ be the (rescaled) Chebyshev polynomials of the second kind on $[-2\sqrt{2}\sigma,2\sqrt{2}\sigma]$,
\begin{equation}
U^{\sigma}_n(x)=\sum_{k=0}^{\lfloor n/2\rfloor}(-1)^k\binom{n-k}{k}\left(\frac{x}{\sqrt{2}\sigma}\right)^{n-2k}.
\end{equation}
Then $\{U^{\sigma}_n(x)\}_{n\geq 0}$ are orthogonal with respect to the bilinear form (\ref{bilinearform}), i.e.  
\begin{equation}
<U_n,U_m>=\delta_{nm}\gamma_n,
\label{or}
\end{equation}
where $\gamma_n$ is given by (\ref{gamma}). 
\end{lem}

\begin{rem}
{\it Note that $<f_1, g_1>=<f_2, g_2>$ if $f_1=f_2$ and $g_1=g_2$ in a neighborhood of $[-2\sqrt{2}\sigma,2\sqrt{2}\sigma].$  Thus, one can reformulate
Lemma \ref{lemma35} in such a way that $\{h\*U^{\sigma}_n\}_{n\geq 0}$ are orthogonal with respect to the bilinear form (\ref{bilinearform}).}
\end{rem}
\begin{rem}
\end{rem}
{\it Recall that the rescaled Chebyshev polynomials are orthonormal with respect to the Wigner semicircle law, i.e.}
\begin{equation}
\label{uhuh}
\int_{-2\sqrt{2}\sigma}^{-2\sqrt{2}\sigma} U^{\sigma}_n(x) \*  U^{\sigma}_m(x) \*\frac{1}{4 \pi \sigma^2} \sqrt{ 8 \sigma^2 - x^2} =\delta_{nm}.
\end{equation}
{\it Also,}
\begin{equation}
\label{sovwings}
U^{\sigma}_n(x)=\frac{\sin[(n+1)\*\theta]}{\sin \theta}, \ \ x=2\sqrt{2}\sigma \*\cos \theta.
\end{equation}
{\it When it does not lead to ambiguity, we will omit the super-index in the notation for the rescaled Chebyshev polynomials (alternatively, the reader 
can assume  that $2\sqrt{2}\sigma=1$). }
\begin{proof}
Since $<x^l,x^m>=0$ if $l+m$ is odd, it follows by linearity that
\begin{equation}
<U_n,U_m>=0, \mbox{ if } n+m \mbox{ is odd}. \label{eqn:odd}
\end{equation}
We are left to compute $<U_{2n},U_{2m}>$ and $<U_{2n+1},U_{2m+1}>$. We first compute $<x^{2l},U_{2n}>$ and \\
$<x^{2l+1},U_{2n+1}>$ for $l=0,1,...,n.$
One has
\begin{eqnarray}
<x^{2l},U_{2n}>&=&(\sqrt{2}\sigma)^{2l}\sum_{k=0}^n(-1)^k\binom{2n-k}{k}C_{2l,2n-2k}\nonumber\\
&=&\frac{(\sqrt{2}\sigma)^{2l}}{2l+1}\left[\sum_{k=0}^{n-l}\frac{(-1)^k}{2n-2k+1}\binom{2n-k}{k}\sum_{t=0}^l(2t+1)^2\binom{2l+1}{l-t}
\binom{2n-2k+1}{n-k-t}\gamma_{2t}\nonumber\right.\\
&&+\left.\sum_{k=n-l+1}^{n}\frac{(-1)^k}{2n-2k+1}\binom{2n-k}{k}\sum_{t=0}^{n-k}(2t+1)^2\binom{2l+1}{l-t}\binom{2n-2k+1}{n-k-t}
\gamma_{2t}\right]\nonumber\\
&=&\frac{(\sqrt{2}\sigma)^{2l}}{2l+1}
\sum_{t=0}^l(2t+1)^2\binom{2l+1}{l-t}
\left[\sum_{k=0}^{n-t}\frac{(-1)^k(2n-k)!}{k!(n-k-t)!(n-k+t+1)!}\right]\gamma_{2t},
\end{eqnarray}
and
\begin{eqnarray}
<x^{2l+1},U_{2n+1}>&=&(\sqrt{2}\sigma)^{2l+1}\sum_{k=0}^n(-1)^k\binom{2n+1-k}{k}C_{2l+1,2n+1-2k}\nonumber\\
&=&\frac{(\sqrt{2}\sigma)^{2l+1}}{2l+2}\left[\sum_{k=0}^{n-l}\frac{(-1)^k}{2n-2k+2}\binom{2n+1-k}{k}\sum_{t=0}^l(2t+2)^2\binom{2l+2}{l-t}
\binom{2n-2k+2}{n-k-t}\gamma_{2t+1}\nonumber\right.\\
&&+\left.\sum_{k=n-l+1}^{n}\frac{(-1)^k}{2n-2k+2}\binom{2n+1-k}{k}\sum_{t=0}^{n-k}(2t+2)^2\binom{2l+2}{l-t}\binom{2n-2k+2}{n-k-t}
\gamma_{2t+1}\right]\nonumber\\
&=&\frac{(\sqrt{2}\sigma)^{2l+1}}{2l+2}\sum_{t=0}^l(2t+2)^2\binom{2l+2}{l-t}
\left[\sum_{k=0}^{n-t}\frac{(-1)^k(2n+1-k)!}{k!(n-k-t)!(n-k+t+2)!}\right]\gamma_{2t+1}.
\end{eqnarray}
%%%%%%%%%%%%%%%%%%%%%%%%%
Denote
\begin{equation}
G_1(n,t)=\sum_{k=0}^{n-t}\frac{(-1)^k(2n-k)!}{k!(n-k-t)!(n-k+t+1)!},
\label{eqn:G1nt}
\end{equation}
\begin{equation}
G_2(n,t)=\sum_{k=0}^{n-t}\frac{(-1)^k(2n+1-k)!}{k!(n-k-t)!(n-k+t+2)!}.
\label{eqn:G2nt}
\end{equation}
Then
\begin{equation}
<x^{2l},U_{2n}>=\frac{(\sqrt{2}\sigma)^{2l}}{2l+1}\sum_{t=0}^l(2t+1)^2\binom{2l+1}{l-t}G_1(n,t)\gamma_{2t},
\label{evenor}
\end{equation}
\begin{equation}
<x^{2l+1},U_{2n+1}>=\frac{(\sqrt{2}\sigma)^{2l+1}}{2l+2}\sum_{t=0}^l(2t+2)^2\binom{2l+2}{l-t}G_2(n,t)\gamma_{2t+1}.
\label{oddor}
\end{equation}
It follows from (\ref{eqn:G1nt}-\ref{eqn:G2nt}) that
\begin{equation}G_1(n,t)=\frac{(2n)!}{(n-t)!(n+t+1)!}{}_2F_1\left(\begin{array}{c}-(n-t),-(n+t+1)\\-2n\end{array};1\right),\end{equation}
\begin{equation}G_2(n,t)=\frac{(2n+1)!}{(n-t)!(n+t+2)!}{}_2F_1\left(\begin{array}{c}-(n-t),-(n+t+2)\\-2n-1\end{array};1\right),\end{equation}
where $_2F_1$ is a hypergeometric function. By the Chu-Vandermonde identity (see e.g. \cite{specialfuntions}), we have
\begin{equation}_2F_1\left(\begin{array}{c}-(n-t),-(n+t+1)\\-2n\end{array};1\right)=\frac{(-n+t+1)_{n-t}}{(-2n)_{n-t}},\end{equation}
\begin{equation}_2F_1\left(\begin{array}{c}-(n-t),-(n+t+2)\\-2n-1\end{array};1\right)=\frac{(-n+t+1)_{n-t}}{(-2n-1)_{n-t}},\end{equation}
where $(a)_n=a(a+1)\cdots(a+n-1)$. Since 
\begin{equation}
(-n+t+1)_{n-t}=\left\{\begin{matrix}0&\mbox{if } t=0,1,...,n-1\\1&\mbox{if }t=n,\end{matrix}\right.
\end{equation}
we obtain
\begin{equation}G_1(n,t)=0,\ G_2(n,t)=0, \mbox{for } t=0,1,...,n-1,\end{equation}
and
\begin{equation}
G_1(n,n)=\frac{1}{2n+1},\ G_2(n,n)=\frac{1}{2n+2}.
\end{equation}
Therefore, for $l=0,1,\ldots, n-1,$ each term at the r.h.s. of (\ref{evenor}-\ref{oddor}) is zero, and
\begin{eqnarray}
&&<x^{2n},U_{2n}>=\frac{(\sqrt{2}\sigma)^{2n}}{2n+1}(2n+1)^2\*
\binom{2n+1}{0}\*G_1(n,n)\*\gamma_{2n}
=(\sqrt{2}\sigma)^{2n}\*\gamma_{2n}
\label{evenor2},\\
&&<x^{2n+1},U_{2n+1}>=\frac{(\sqrt{2}\sigma)^{2n+1}}{2n+2}(2n+2)^2\*
\binom{2n+2}{0}\*G_2(n,n)\*\gamma_{2n+1}
=(\sqrt{2}\sigma)^{2n+1}\*\gamma_{2n+1}.
\label{oddor2}
\end{eqnarray}
Hence, for $m< n$,
\begin{equation}
<U_{2m},U_{2n}>=0, \ <U_{2m+1},U_{2n+1}>=0,
\label{eqn:orthogonal1}
\end{equation}
and 
\begin{eqnarray}
&&<U_{2n},U_{2n}>=<\left(\frac{x}{\sqrt{2}\sigma}\right)^{2n},U_{2n}>=\gamma_{2n},
\label{or2}\\
&&<U_{2n+1},U_{2n+1}>=<\left(\frac{x}{\sqrt{2}\sigma}\right)^{2n+1},U_{2n+1}>=\gamma_{2n+1}.
\label{or3}
\end{eqnarray}
Combining (\ref{eqn:orthogonal1}), (\ref{eqn:odd}), (\ref{or2}) and (\ref{or3}), we complete the proof of Lemma \ref{lemma35}.
\end{proof}

Now, we are ready to finish the proof of Proposition \ref{propan}.
Let $f,g\in C_b(\mathbb{R})$, and
\begin{equation}
f_k=\frac{1}{4\pi\sigma^2}\int_{-2\sqrt{2}\sigma}^{2\sqrt{2}\sigma}f(x)U_k(x)\sqrt{8\sigma^2-x^2}dx,\ g_k=\frac{1}{4\pi\sigma^2}
\int_{-2\sqrt{2}\sigma}^{2\sqrt{2}\sigma}g(x)U_k(x)\sqrt{8\sigma^2-x^2}dx.
\end{equation}
Then
\begin{eqnarray}
\label{uuu}
<f,g>&=&\sum_{k=0}^{\infty}f_k\*g_k\gamma_k \\
\label{vvv}
&=&\frac{1}{8\*\pi^3\*\sigma^2}\int_{-2\sqrt{2}\sigma}^{2\sqrt{2}\sigma}\int_{-2\sqrt{2}\sigma}^{2\sqrt{2}\sigma}f(x)g(y)\sqrt{8\sigma^2-x^2}
\sqrt{8\sigma^2-y^2}\*F_{\sigma}(x,y)\* 1_{\{x\not=y\}}\*dxdydt,
\end{eqnarray}
where, for $x\not=y,$
\begin{eqnarray}
F_{\sigma}(x,y)&=&\frac{\pi}{2\*\sigma^2}\*\sum_{k=0}^{\infty}U_k(x)U_k(y)\gamma_k \nonumber\\
\label{fxyxy}
&=&  \int_{-\infty}^{\infty}\frac{\frac{\sin s}{s}-\frac{\sin^3 s}{s^3}}{2\sigma^2
\left(1-\frac{\sin^2 s}{s^2}\right)^2-\left(\frac{\sin s}{s}+\frac{\sin^3 s}{s^3}\right)xy+\frac{\sin^2 s}{s^2}(x^2+y^2)}ds.
\end{eqnarray}
Formula (\ref{uuu}) follows for polynomials from (\ref{or}) and (\ref{uhuh}), and then by continuity, by repeating the arguments at the end 
of the proof of Lemma \ref{lemma34}, for general continuous bounded functions.  Formula (\ref{fxyxy}) is a straightforward consequence of the 
Fourier analysis.
It follows from (\ref{sovwings}) that the r.h.s. of (\ref{uuu}) can be rewritten as 
\begin{equation}
\label{sto}
<f,g>= -2\*\sum_{l\not=0} \hat{\alpha}_l\*\hat{\beta}_l\*\gamma_{|l|-1},
\end{equation}
where
\begin{eqnarray}
& & \alpha(\theta)=f(2\sqrt{2}\*\sigma\*\cos\theta), \ \ 
\beta(\theta)=g(2\sqrt{2}\*\sigma\*\cos\theta), \\
& & \hat{\alpha}_l=\frac{1}{2\pi}\*\int_0^{2\pi} \alpha(\theta)\* e^{-i\*l\*\theta} \* d\theta,\ \ 
\hat{\beta}_l=\frac{1}{2\pi}\*\int_0^{2\pi} \beta(\theta)\* e^{-i\*l\*\theta}\* d\theta.
\end{eqnarray}
In particular, the trigonometric series $\sum_{l\not=0} \gamma_{|l|-1} \*e^{i\*l\*\theta}$ represents
an $L^1$ function $h$ which has $O(|\theta|^{-1/2})$  singularity near the origin.
The convergence is pointwise for all $\theta\not=0,$
\begin{eqnarray}
& & h(\theta)=\sum_{l\not=0} \gamma_{|l|-1} \*e^{i\*l\*\theta}, \ \theta\not=0, \nonumber\\
& & \hat{h}_l=\gamma_{|l|-1}, \ {\text if} \ l\not=0, \ \hat{h}_0=0. \nonumber
\end{eqnarray}
The convolution of $\beta$ and $h$ is then a continuous function on the unit circle, and 
one can rewrite (\ref{sto}) in the integral form by applying the Parseval's theorem.

Finally, it follows from (\ref{An}) and (\ref{vvv}) that
the limit of $A_n(x)$ exists and equals
\begin{equation}
A(t)=-2\sigma^2\int_0^t<e^{it_1x},\varphi'>dt_1
\end{equation}
with
\begin{eqnarray}
\label{bilinearlim}
<e^{it_1x},\varphi'>=\frac{1}{8\*\pi^3\sigma^2}\*\int_{-2\sqrt{2}\sigma}^{2\sqrt{2}\sigma}\*\int_{-2\sqrt{2}\sigma}^{2\sqrt{2}\sigma}\*e^{it_1x}
\*\varphi'(y) \*\sqrt{8\sigma^2-x^2}\*\sqrt{8\sigma^2-y^2}\*F_{\sigma}(x,y)\*1_{\{x\not=y\}}\*dx\*dy.
\end{eqnarray}
Proposition \ref{propan} is proven.
\end{proof}

%%%%%%%%%%%%%%%%%%%%%%%%%%%

\subsection{Variance}
The rest of the proof of Theorem \ref{thm-Poin} follows the steps in \cite{P.CLT}.
Using pre-compactness of $\{Y_n, Z_n\}_{n\geq 1},$ we consider a converging subsequence. Our goal is to show that the limit is unique.
Let \begin{equation}
Y_{n_j}(x,t)\to Y(x,t),\ Z_{n_j}(x)\to Z(x).
\end{equation}
By Wigner semicircle law,
\begin{equation}
\bar{v}_n(t)\to\int_{-2\sqrt{2}\sigma}^{2\sqrt{2}\sigma}\frac{e^{ity}}{4\pi\sigma^2}\sqrt{8\sigma^2-y^2}dy:=v(t).
\end{equation}
So the limit of $Y_n(x,t)$ satisfies the following equation:
\begin{eqnarray}
\lefteqn{Y(x,t)+4\sigma^2\int_0^t\int_0^{t_1}v(t_1-t_2)Y(x,t_2)dt_2dt_1=}\nonumber\\
&&xZ(x)A(t)+2i\kappa_4xZ(x)\int_0^tv*v(t_1)dt_1\int_{-\infty}^{\infty}t_2v*v(t_2)\hat{\varphi}(t_2)dt_2,
\label{avangard}
\end{eqnarray}
where $v*v$ is defined in (\ref{svertka}), 
\begin{equation}
v*v(t)=-\frac{i}{8\pi\sigma^4}\int_{-2\sqrt 2\sigma}^{2\sqrt2\sigma}e^{it\mu}\mu\sqrt{8\sigma^2-\mu^2}d\mu.
\end{equation}
Let
\begin{equation}
B=\int_{-\infty}^{\infty}t_2v*v(t_2)\hat{\varphi}(t_2)\*dt_2=\frac{1}{4\pi\sigma^4}\*
\int_{-2\sqrt{2}\sigma}^{2\sqrt{2}\sigma}\varphi(\mu)\frac{4\sigma^2-\mu^2}{\sqrt{8\sigma^2-\mu^2}}\*d\mu.
\end{equation}
As in \cite{P.CLT} (see formulas (2.82)-(2.86) and Proposition 2.1 there), 
we can solve (\ref{avangard}) to obtain
\begin{equation}
Y(x,t)=-\frac{2\sigma^2xZ(x)}{\pi}\int_{-2\sqrt{2}\sigma}^{2\sqrt{2}\sigma}\*\int_0^t\*\frac{e^{i\lambda (t-t_1)}<e^{it_1x},\varphi'>}
{\sqrt{8\sigma^2-\lambda^2}}\*\*dt_1\*d\lambda+\frac{i\kappa_4xZ(x)B}{4\pi\sigma^4}\*\int_{-2\sqrt{2}\sigma}^{2\sqrt{2}\sigma}\*\frac{e^{it\lambda}
(4\sigma^2-\lambda^2)}
{\sqrt{8\sigma^2-\lambda^2}}\*d\lambda.
\end{equation}

It then follows from (\ref{eqn:Znprime}) that
\begin{eqnarray}
Z'(x)&=&-\frac{2i\sigma^2xZ(x)}{\pi}\int_{-\infty}^{\infty}\int_{-2\sqrt{2}\sigma}^{2\sqrt{2}\sigma}
\*\int_0^t\frac{\hat{\varphi}(t)e^{i\lambda (t-t_1)}}{\sqrt{8\sigma^2-\lambda^2}}<e^{it_1x},\varphi'> \*dt_1 
\*d\lambda \*dt\nonumber\\
& & -\frac{\kappa_4\*x\*Z(x)}{16\*\pi^2\*\sigma^8}
\left(\int_{-2\sqrt{2}\sigma}^{2\sqrt{2}\sigma}\frac{\varphi(\lambda)(4\sigma^2-\lambda^2)}{\sqrt{8\sigma^2-\lambda^2}}d\lambda\right)^2.
\end{eqnarray}
One can rewrite the last formula in the form (\ref{eqn:Zn1}) with
\begin{eqnarray}
Var_{band}[\varphi]&=&\frac{2i\sigma^2}{\pi}\int_{-\infty}^{\infty}
\*\int_{-2\sqrt{2}\sigma}^{2\sqrt{2}\sigma}\*\int_0^t\frac{\hat{\varphi}(t)e^{i\lambda (t-t_1)}}{\sqrt{8\sigma^2-\lambda^2}}
<e^{it_1x},\varphi'>\*dt_1\*d\lambda \*dt \nonumber\\
&& +\frac{\kappa_4}{16\pi^2\sigma^8}
\left(\int_{-2\sqrt{2}\sigma}^{2\sqrt{2}\sigma}\frac{\varphi(\lambda)(4\sigma^2-\lambda^2)}{\sqrt{8\sigma^2-\lambda^2}}d\lambda\right)^2 \nonumber\\
\end{eqnarray}
Plugging in (\ref{bilinearlim}), finally we have
\begin{eqnarray}
Var_{band}[\varphi]&=&\int_{-2\sqrt{2}\sigma}^{2\sqrt{2}\sigma}\int_{-2\sqrt{2}\sigma}^{2\sqrt{2}\sigma}
\*\int_{-2\sqrt{2}\sigma}^{2\sqrt{2}\sigma}
\*\frac{(\varphi(x)-\varphi(\lambda))\varphi'(y)\sqrt{8\sigma^2-x^2}\sqrt{8\sigma^2-y^2}}{4\pi^4(x-\lambda)\sqrt{8\sigma^2-\lambda^2}}
F(x,y)\*1_{\{x\not=y\}}\*dx\*dy\*d\lambda \nonumber \\
& &+\frac{\kappa_4}{16\pi^2\sigma^8}
\left(\int_{-2\sqrt{2}\sigma}^{2\sqrt{2}\sigma}\frac{\varphi(\lambda)(4\sigma^2-\lambda^2)}{\sqrt{8\sigma^2-\lambda^2}}d\lambda\right)^2.
\end{eqnarray}
This finishes the proof of Theorem \ref{thm-Poin} for test functions satisfying (\ref{ineq:fourier}). 

Now, let $\varphi$ be an arbitrary function 
with bounded continuous derivative. It follows from Lemma \ref{norma} that we can assume that $\varphi$ has compact support inside the interval
$[-10\*\sigma, 10\*\sigma].$  One then approximates $\varphi$ in the $C_1([-10\*\sigma, 10\*\sigma])$ norm 
by functions satisfying (\ref{ineq:fourier}) and 
uses the bound (\ref{pi1}) to control the variance of the error term.  Theorem \ref{thm-Poin} is proven.

%%%%%%%%%%%%%%%%%%%%%%%%%

\section{Proof of Theorem \ref{thm:0cumulant}}

This section is devoted to the proof of Theorem \ref{thm:0cumulant}.
Thus, our goal is to extend the result of Theorem \ref{thm-Poin} to the case of non-i.i.d. entries with uniformly bounded fifth moment.
For technical reasons, we require that the fourth cumulant is zero and $\sqrt{n}\*\ln n \ll b_n.$
First, we establish two auxiliary lemmas.
\subsection{}
The first lemma is a simple statement about the norm of a sub-matrix of a unitary matrix.
\begin{lem}\label{lem:block}
Let $U$ be an $n\times n$ unitary matrix and $V$ be any $k\times k$ block of $U$. Then
\begin{equation}\|V\|\leq 1\end{equation}
\end{lem}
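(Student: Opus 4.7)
The plan is to realize the block $V$ as a compression of $U$ to specified coordinates, and then observe that both the compression (restriction to a subset of rows) and the embedding (zero-padding a vector indexed by a subset of columns) are norm-one maps, so $V$ is the composition of three norm-one operators.

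More concretely, I would proceed as follows. Let $I, J \subset \{1,\ldots,n\}$ be the index sets of size $k$ selecting the rows and columns of $U$ that form $V$. Let $Q_J: \mathbb{C}^k \to \mathbb{C}^n$ denote the isometric embedding that places a $k$-vector into the coordinates indexed by $J$ and puts zeros elsewhere, and let $P_I: \mathbb{C}^n \to \mathbb{C}^k$ denote the coordinate projection onto the rows indexed by $I$. Then by inspection $V = P_I\, U\, Q_J$.

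Now I would estimate
\begin{equation}
\|V\| \;\leq\; \|P_I\|\cdot\|U\|\cdot\|Q_J\| \;=\; 1\cdot 1\cdot 1 \;=\; 1,
\end{equation}
using that $\|Q_J\|=1$ because it is an isometry, $\|U\|=1$ because $U$ is unitary, and $\|P_I\|=1$ because it is an orthogonal projection (restricted to its range). Equivalently, for any $x\in\mathbb{C}^k$ with $\|x\|=1$, the padded vector $Q_J x \in \mathbb{C}^n$ also has unit norm, hence $\|U Q_J x\|=1$, and throwing away coordinates can only decrease the norm, giving $\|Vx\|=\|P_I U Q_J x\|\leq 1$.

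There is essentially no obstacle here; the only point to be slightly careful about is the assertion $\|P_I\|\leq 1$, which is immediate since discarding coordinates of an orthonormal decomposition cannot increase the $\ell^2$ norm. The lemma is a standard fact about unitary matrices and the argument above is self-contained.
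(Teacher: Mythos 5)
Your proof is correct and is essentially the same as the paper's: the paper also writes the block as a compression $V = P_1 U P_2$ by coordinate projections and uses $\|V\|\leq\|P_1\|\,\|U\|\,\|P_2\|=1$. Your version with arbitrary index sets $I,J$ is marginally more general than the paper's contiguous-block formulation, but the argument is identical.
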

\begin{proof}
Suppose the indices of $V$ in $U$ are $(s,s+1,...,s+k-1)\times(t,t+1,...,t+k-1)$. Then 
\begin{equation}V=P_1UP_2\end{equation}
where $P_1$ is the orthogonal projection onto the subspace spanned by $e_s,...,e_{s+k-1}$, and $P_2$ is the orthogonal projection onto the subspace
spanned by $e_t,...,e_{t+k-1}.$
Then 
\begin{equation}
\|V\|\leq \|P_1\|\|U\|\|P_2\|= 1.
\end{equation}
\end{proof}
The second lemma gives an upper bound on  the norm of a band matrix built from a unitary matrix.
\begin{lem}
Let $U$ be an $n\times n$ unitary matrix. 
Let $b$ be a positive integer smaller than $n/2$. Denote
$I:=\{(j,k)| \ j,k=1,...,n,\ |j-k|\leq b \mbox{ or } n-|j-k|\leq b\}$. Let
\begin{equation}U^{(band)}:=\{U_{jk}, (j,k)\in I;0 \mbox{ otherwise}\}_{j,k=1}^n. \end{equation}
Then there exist positive constants $C_1$ and $C_2,$ independent from $n$ and $b,$ such that 
\begin{equation}
\|U^{(band)}\|\leq C_1\*\ln b+ C_2.
\label{ineq:Ubnorm}
\end{equation}
\end{lem}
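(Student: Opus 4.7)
The plan is to exploit the cyclic structure of the band restriction via Fourier analysis on $\Z/n\Z$: write the indicator of the band as an inverse discrete Fourier transform (the Dirichlet kernel), which expresses $U^{(band)}$ as a weighted average of conjugations of $U$ by diagonal unitaries, and then bound the resulting sum by the discrete $L^1$ norm of the Dirichlet kernel, which is $O(\ln b)$.

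First, with $\omega = e^{2\pi i/n}$, define $\chi:\Z/n\Z\to\{0,1\}$ by $\chi(m)=\mathbf 1_{d_n(0,m)\le b}$, so that $U^{(band)}_{jk}=U_{jk}\,\chi(k-j)$. The inverse DFT gives
\begin{equation*}
\chi(m)=\frac{1}{n}\sum_{\ell=0}^{n-1}\hat\chi(\ell)\,\omega^{\ell m},\qquad \hat\chi(\ell)=\sum_{m=-b}^{b}\omega^{-\ell m}.
\end{equation*}
Let $D_\ell=\mathrm{diag}(\omega^{\ell\cdot 1},\ldots,\omega^{\ell\cdot n})$; each $D_\ell$ is unitary, and $(D_\ell^{*}UD_\ell)_{jk}=U_{jk}\omega^{\ell(k-j)}$. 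Substituting yields
\begin{equation*}
U^{(band)}=\frac{1}{n}\sum_{\ell=0}^{n-1}\hat\chi(\ell)\,D_\ell^{*}UD_\ell.
\end{equation*}

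Since each $D_\ell^{*}UD_\ell$ is unitary, the triangle inequality gives $\|U^{(band)}\|\le n^{-1}\sum_{\ell}|\hat\chi(\ell)|$. Summing the geometric series, $\hat\chi(0)=2b+1$ and for $\ell\ne 0$
\begin{equation*}
|\hat\chi(\ell)|=\left|\frac{\sin(\pi(2b+1)\ell/n)}{\sin(\pi\ell/n)}\right|\le \min\!\left(2b+1,\ \frac{n}{2\ell\wedge 2(n-\ell)}\right),
\end{equation*}
using $|\sin(\pi\ell/n)|\ge 2\min(\ell,n-\ell)/n$. Splitting the sum at $\ell_{*}=\lfloor n/(2(2b+1))\rfloor$ and using the trivial bound $2b+1$ for $\ell\le\ell_{*}$ (contribution $\le 1$) and $n/(2\ell)$ for $\ell_{*}<\ell\le n/2$ (contribution bounded by a harmonic sum $\le \tfrac12\ln(2b+1)+O(1)$), with the $\ell>n/2$ range handled by symmetry $\hat\chi(\ell)=\hat\chi(n-\ell)$, gives $n^{-1}\sum_\ell|\hat\chi(\ell)|\le C_1\ln b+C_2$, as claimed.

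The conceptual content is entirely in the Fourier identity for $U^{(band)}$; once one recognizes the Schur multiplier as a Toeplitz matrix associated with the cyclic symbol $\chi$, the operator norm reduces to the $L^1$ norm of the Dirichlet kernel. No real obstacle arises: the main care required is the routine Dirichlet-kernel estimate, which produces the $\ln b$ factor rather than $\ln n$ because the split point $\ell_{*}$ depends only on $b$. Note that unitarity of $U$ is used only through $\|D_\ell^{*}UD_\ell\|=1$, so the same argument in fact bounds the norm of any band restriction by $(C_1\ln b+C_2)\|U\|$.
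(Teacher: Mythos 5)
Your proof is correct, but it follows a genuinely different route from the paper. The paper splits $U^{(band)}=A+B$ into the main-diagonal band and the two corner pieces, partitions these into $(b+1)\times(b+1)$ blocks, observes (via the preceding lemma) that any block of a unitary matrix has norm at most $1$, and then invokes the Mathias bound on the Hadamard operator norm of triangular truncation, which says the strictly triangular part of an $N\times N$ matrix has norm $O(\log N)$ times the norm of the matrix; assembling the block-diagonal and block-off-diagonal pieces then gives $C_1\ln b+C_2$. You instead recognize the band restriction as a Schur multiplier with a cyclic Toeplitz symbol, expand the symbol by the inverse DFT, write
\begin{equation*}
U^{(band)}=\frac{1}{n}\sum_{\ell=0}^{n-1}\hat\chi(\ell)\,D_\ell^{*}UD_\ell,
\end{equation*}
and bound the norm by the Lebesgue constant of the Dirichlet kernel, $\frac1n\sum_\ell|\hat\chi(\ell)|=O(\ln b)$; the split of the harmonic sum at $\ell_*\asymp n/b$ correctly produces $\ln b$ rather than $\ln n$, and the edge cases ($\ell_*=0$, i.e.\ $b$ comparable to $n$, or $b=1$) are harmless since then $\ln b$ and $\ln n$ differ by $O(1)$, resp.\ the sum is $O(1)$. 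Your approach buys self-containedness (no appeal to the external triangular-truncation estimate), it handles the cyclic wrap-around natively without the $A+B$ decomposition and the padding to a multiple of $b+1$, and it makes explicit that unitarity enters only through $\|D_\ell^{*}UD_\ell\|=\|U\|$, so the same argument gives $\|X^{(band)}\|\le (C_1\ln b+C_2)\|X\|$ for an arbitrary matrix $X$ — a fact the paper's method also yields but only through the cited Mathias result. The paper's block argument, in turn, is what one would use if one prefers to quote the sharp known constant for triangular truncation and to avoid Fourier analysis.
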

\begin{proof}
Define
\begin{eqnarray}
& &A=\{U_{jk},\ |j-k|\leq b \ ;0 \mbox{ otherwise}\}_{j,k=1}^n,\\
& &B=\{U_{jk},\ n-|j-k|\leq b \ ;0 \mbox{ otherwise}\}_{j,k=1}^n.
\end{eqnarray}
Then 
\begin{equation}
U^{(band)}=A+B,
\end{equation}
and
\begin{equation}\|U^{(band)}\|\leq\|A\|+\|B\|\label{ineq:ABnorm}.\end{equation}
Matrix $B$ can be written as
\begin{equation}B=\left[\begin{array}{ccc}
0&0&B_1\\
0&0&0\\
B_2&0&0
\end{array}\right]\end{equation}
where $B_1,B_2$ are $(b+1)\times(b+1)$ matrices and $B_1 \ (B_2)$ is a strictly upper (lower) triangular matrix obtained from the corresponding
$(b+1)\times (b+1)$ block of $U$ by making all entries below (above) the main diagonal zero.
It is known (see e.g. \cite{operatornorm}) that if $B_{upper}$ is an upper triangular matrix constructed in such a way from an $N\times N$ matrix $B$ then
$\|B_{upper}\|\leq O(\log N)\*\|B\|.$  Applying
Lemma \ref{lem:block}, we obtain
\begin{eqnarray}
& & \|B_1\|,\|B_2\|\leq Const\ln(b+1), \nonumber \\
& & \|B\|\leq\|B_1\|+\|B_2\|\leq 2Const\ln(b+1). 
\label{ineq:Bnorm}
\end{eqnarray}
Now we turn our attention to the norm of $A.$ Write $n=m\times (b+1)-r, \ 0\leq r\leq b$.
Define
\begin{equation}U'=\left[\begin{array}{cccccccc}U&0_{n\times r}\\
0_{r\times n}&0_{r\times r}
\end{array}\right],A'=\left[\begin{array}{cccccccc}A&0_{n\times r}\\
0_{r\times n}&0_{r\times r}.
\end{array}\right]\end{equation}
Then $\|U'\|=\|U\|=1,\|A'\|=\|A\|$ and $A'$ can be written as a block matrix
\begin{equation}\left[\begin{array}{cccccccc}A_{11}&A_{12}&0&...&...&...\\
A_{21}&A_{22}&A_{23}&...&...&...\\
0&A_{32}&A_{33}&...&...&...\\
...&...&...&...&...&0\\
...&...&...&...&A_{m-1,m-1}&A_{m-1,m}\\
...&...&...&0&A_{m,m-1}&A_{m,m},
\end{array}\right]\end{equation}
where $A_{jj}'s$ are $(b+1)\times (b+1)$ blocks of $U'$.  Moreover, $A_{jk}'s \ (j\neq k)$ are strictly lower or upper triangular submatrices of the 
corresponding blocks in $U'$. Again, applying the Mathias bound in \cite{operatornorm},
we have
\begin{equation}
\|A_{jj}\|\leq1,\ \|A_{jk}\|\leq Const\ln (b+1).
\end{equation}
Let 
\begin{equation}
D=Diag\{A_{11},...,A_{mm}\}.
\end{equation}
Then \begin{equation}
\|D\|\leq \max_{1\leq i\leq m}\|A_{ii}\|\leq1.
\end{equation}
Let
\begin{equation}A_{i}=\left[\begin{array}{cccccccc}
0&A_{i,i+1}\\
A_{i+1,i}&0
\end{array}\right],i=1,...,m-1.
\end{equation} 
Then $A_i's$ are $2(b+1)\times 2(b+1)$ matrices, and
\begin{equation}\|A_i\|\leq\|A_{i,i+1}\|+\|A_{i+1,i}\|=2Const\ln(b+1).
\end{equation}
If $m$ is even, let
\begin{eqnarray*}
& & E=Diag\{A_1,A_3,...,A_{m-1}\}, \\
& & F=Diag\{0_{1\times1},A_2,A_4,...,A_{m-2},0_{1\times1}\}. 
\end{eqnarray*}
If $m$ is odd, let
\begin{eqnarray*}
& & E=Diag\{A_1,A_3,...,A_{m-2},0_{1\times1}\},\\
& & F=Diag\{0_{1\times1},A_2,A_4,...,A_{m-1}\}.
\end{eqnarray*}
Then
\begin{equation*}
A'=D+E+F,
\end{equation*}
and
\begin{equation}\|E\|,\|F\|\leq\max_{1\leq i\leq m}\{\|A_{i}\|\}=2\*Const\*\ln(b+1).
\end{equation}
Therefore, we have
\begin{equation}\|A'\|\leq \|D\|+\|E\|+\|F\|=1+4\*Const\*\ln(b+1)\leq C\*\ln(b+1).
\end{equation}

Therefore, 
\begin{equation}
\|A\|\leq C\*\ln(b+1).
\label{ineq:Anorm}
\end{equation}
Finally, (\ref{ineq:ABnorm}),(\ref{ineq:Bnorm}), and (\ref{ineq:Anorm}) imply (\ref{ineq:Ubnorm}).
\end{proof}

%%%%%%%%%%%%%%%%%%%%%%%%%%%%%%%%%%%%%%%%
\subsection{}
Now, we are ready to prove Theorem \ref{thm:0cumulant}.
\begin{proof}[\bf{Proof of Theorem \ref{thm:0cumulant}.}]
Let $\hat{M}=b_n^{-1/2}\hat{W}$ be a band random real symmetric matrix with independent Gaussian random variables, 
and $M$ be an arbitrary band real symmetric random matrix satisfying the conditions in Theorem \ref{thm:0cumulant}.
We denote, respectively, by $\hat{\mathscr{M}}_n^{\circ}[\varphi]$
and $\mathscr{M}_n^{\circ}[\varphi]$
the centered normalized linear eigenvalue statistics of $\hat{M}$ and $M$ defined as in (\ref{linejnaya}).
Since Gaussian distribution satisfies the Poincar\'e inequality, Theorem \ref{thm-Poin} establishes the Central Limit Theorem for 
$\hat{\mathscr{M}}_n^{\circ}[\varphi].$
Thus, it suffices to show that, for every $x\in\mathbb{R}$, 
\begin{equation}
R_n(x):=\mathbb{E}\{e^{ix\mathscr{M}_n^{\circ}[\varphi]}\}-\mathbb{E}\{e^{ix\hat{\mathscr{M}}_n^{\circ}[\varphi]}\}\to 0, \ n\to\infty.
\label{eqn:defRn}
\end{equation}
Let us denote 
\begin{equation}
e_n(s,x)=\exp\{(b_n/n)^{1/2}ixTr\varphi(M(s))^{\circ}\},
\end{equation}
where $M(s)$ is the interpolating matrix $M(s)=s^{1/2}M+(1-s)^{1/2}\hat{M},\ 0\leq s\leq 1.$ 
We have 
\begin{equation}
R_n(x)=\int_0^1\frac{\partial}{\partial s}\mathbb{E}\{e_n(s,x)\}ds.
\end{equation}
Taking into account that
\begin{eqnarray}
\frac{\partial}{\partial s}e_n(s,x)&=&\sum_{(j,k)\in I_n^+}\frac{\partial e_n(s,x)}{\partial M_{jk}(s)}\frac{\partial M_{jk}(s)}{\partial s}\nonumber\\
&=&(b_n/n)^{1/2}ixe_n(s,x)\sum_{(j,k)\in I_n^+}\frac{\partial Tr\varphi(M(s))^{\circ}}{\partial M_{jk}(s)}\frac{\partial M_{jk}(s)}{\partial s}\nonumber\\
&=&(b_n/n)^{1/2}ixe_n(s,x)\sum_{(j,k)\in I_n^+}2\beta_{jk}(\varphi'_{jk}(M(s)))^{\circ}\frac{1}{2}(s^{-1/2}M_{jk}-(1-s)^{-1/2}\hat{M}_{jk})\nonumber\\
&=&\frac{\sqrt{b_n}}{2\sqrt{n}}ixe_n(s,x)Tr(\varphi'(M(s)))^{\circ}(s^{-1/2}M-(1-s)^{-1/2}\hat{M}),
\end{eqnarray}
we can write
\begin{equation}
\label{zzzzz}
R_n(x)=\frac{ix}{2\sqrt{n}}\int_0^1\mathbb{E}\{e_n^{\circ}(s,x)Tr\varphi'(M(s))(s^{-1/2}W-(1-s)^{-1/2}\hat{W})\}ds.
\end{equation}
Since
\begin{equation}
\varphi'(M)=i\*\int_{-\infty}^{+\infty}\hat{\varphi}(t)\*t\*U(t)\*dt,
\end{equation}
we can rewrite (\ref{zzzzz}) as
\begin{eqnarray}
R_n(x)&=&-\frac{x}{2\sqrt{n}}\*\int_0^1\*\int\hat{\varphi}(t)\*t\*\mathbb{E}\*\{e_n^{\circ}(s,x)\*Tr\*U(s,t)\*(s^{-1/2}W-(1-s)^{-1/2}\hat{W})\}\*dt\*ds\\
&=&-\frac{x}{2}\*\int_0^1 \*\int \*\hat{\varphi}(t)\*t\*[K_n-L_n]\*dt\*ds,
\label{eqn:Rnint}
\end{eqnarray}
where
\begin{eqnarray}
& & K_n=\frac{1}{\sqrt{ns}}\sum_{(j,k)\in I_n}\mathbb{E}\{W_{jk}\Phi_n\}, \nonumber \\
& & L_n=\frac{1}{\sqrt{n(1-s)}}\sum_{(j,k)\in I_n}\mathbb{E}\{\hat{W}_{jk}\Phi_n\}, \ \ {\text and}\nonumber \\
& & \Phi_n=U_{jk}(s,t)e_n^{\circ}(s,x),\ \ U(s,t)=e^{itM(s)}. \nonumber
\end{eqnarray}
Applying the decoupling formula with $p=3$ to every term in $K_n$ and $L_n$, we obtain
\begin{equation}
K_n-L_n=I_2+I_3+\varepsilon_3,
\label{eqn:An-Bn}
\end{equation}  
where
\begin{equation}
I_l=\frac{s^{(l-1)/2}}{l!n^{1/2}b_n^{l/2}}
\sum_{(j,k)\in I_n}\kappa_{l+1,jk}\mathbb{E}\{D_{jk}^l(s)\Phi_n\},D_{jk}(s)\partial/\partial M_{jk}(s),\ l=2,3,
\end{equation}
and
\begin{equation}
|\varepsilon_3|\leq\frac{C_3\sigma_5}{\sqrt{n}b_n^2}\sum_{(j,k)\in I_n}\sup_{M\in\mathbb{R}}|D_{jk}^4(s)\Phi_n|_{M(s)=M}|.
\end{equation}
Let us consider $I_2$ first.
\begin{eqnarray}
I_2&=&\frac{\sqrt{s}\kappa_3}{n^{3/2}}\*2\*x^2\*\sum_{(j,k)\in I_n}\*\beta_{jk}^2\*
\mathbb{E}\*\{e_n(s,x)\*U_{jk}(s,t)\left[\int \theta\*\hat{\varphi}(\theta)\*U_{jk}(s,\theta)\*d\theta\right]^2\}\nonumber\\
&&-\frac{\sqrt{s}\kappa_3}{n\sqrt{b_n}}\*i\*x\*\sum_{(j,k)\in I_n}\beta_{jk}^2\mathbb{E}\{e_n(s,x)U_{jk}(s,t)\*\int \theta\*\hat{\varphi}(\theta)
[U_{jj}*U_{kk}+U_{jk}*U_{jk}](s,\theta)\*d\theta\}\nonumber\\
&&-\frac{\sqrt{s}\kappa_3}{n\sqrt{b_n}}\*x\*\sum_{(j,k)\in I_n}\beta_{jk}^2\mathbb{E}\{e_n(s,x)[U_{jj}*U_{kk}+U_{jk}*U_{jk}](s,t)\*\int \*\theta
\*\hat{\varphi}(\theta)U_{jk}(s,\theta)\*d\theta\}\nonumber\\
&&-\frac{\sqrt{s}\kappa_3}{\sqrt{n}b_n}\sum_{(j,k)\in I_n}\beta_{jk}^2\*\mathbb{E}\{e_n^{\circ}(s,x)(U_{jk}*U_{jk}*U_{jk}+3U_{jj}*U_{kk}*U_{jk})(s,t)\}+I_2',
\end{eqnarray}
where 
\[I_2'=\frac{\sqrt{s}}{2\sqrt{nb_n}}
\sum_{j=1}^n(\kappa_{3,jj}-\kappa_3)\mathbb{E}\{D_{jk}^2(s)\Phi_n\}.\]
%\begin{eqnarray}
%I_2'&=&(\kappa'_{3}-\kappa_3)\times\left\{\frac{\sqrt{s}}{2n^{3/2}}x^2\sum_{j=1}^n\mathbb{E}\{e_n(s,x)U_{jj}(s,t)
%\left[\int \theta\hat{\varphi}(\theta)U_{jj}(s,\theta)d\theta\right]^2\}\right.\nonumber\\
%&&-\frac{\sqrt{s}}{2n\sqrt{b_n}}ix\sum_{j=1}^n\mathbb{E}\{e_n(s,x)U_{jj}(s,t)\int \theta\hat{\varphi}(\theta)[U_{jj}*U_{jj}](s,\theta)d\theta\}\nonumber\\
%&&-\frac{\sqrt{s}}{2n\sqrt{b_n}}x\sum_{j=1}^n\mathbb{E}\{e_n(s,x)[U_{jj}*U_{jj}](s,t)\int \theta\hat{\varphi}(\theta)U_{jj}(s,\theta)d\theta\}\nonumber\\
%&&\left.-\frac{\sqrt{s}}{\sqrt{n}b_n}\sum_{j=1}^n\mathbb{E}\{e_n^{\circ}(s,x)(U_{jj}*U_{jj}*U_{jj})(s,t)\}\right\}.
%\end{eqnarray}
Recall $\kappa_{3,jj}$ is the third cumulant of the $j$th diagonal entry and $\kappa_3$ is the third cumulant of the off-diagonal entries. Note that
\begin{equation}|D_{jk}^l(s)\Phi_n|\leq C_l(\sqrt{b_n/n}x,t),0\leq l\leq 4,
\end{equation}
So 
\[|I_2'|\leq \frac{\sqrt{n}}{b_n}C_2(\sqrt{b_n/n}x,t).\]
Consider two types of the sums above:
\begin{equation}I_{21}=\sum_{(j,k)\in I_n}U_{jj}(s,t_1)U_{jk}(s,t_2)U_{kk}(s,t_3),\end{equation}
\begin{equation}I_{22}=\sum_{(j,k)\in I_n}U_{jk}(s,t_1)U_{jk}(s,t_2)U_{jk}(s,t_3).\end{equation}
It follows from the Cauchy-Schwarz inequality that
\begin{equation}|I_{22}|\leq \left(\sum_{j,k=1}^n|U_{jk}(s,t_1)|^2\right)^{1/2}\left(\sum_{j,k=1}^n|U_{jk}(s,t_2)|^2\right)^{1/2}=n.\end{equation}
In addition,
\begin{equation}
I_{21}=n(U^{(B)}(s,t_2)V(t_1),V(t_3)),V(t)=n^{-1/2}(U_{11}(t),...,U_{nn}(t))^t.
\end{equation}
Since $\|U^{(B)}\|\leq C\ln b_n,\ \|V(t)\|\leq1$, we have $|I_{21}|\leq C\*n\*\ln b_n.$
Therefore,
\[|I_2|\leq C_1\frac{x^2}{\sqrt{n}}+C_2\frac{|x|\ln b_n}{\sqrt{n}}+C_3\frac{\sqrt{n}\ln b_n}{b_n}+\frac{\sqrt{n}}{b_n}C_2(\sqrt{b_n/n}x,t).\]
Since $\frac{\sqrt{n}\ln n}{b_n}\to 0$, then $I_2\to0$ on any bounded subset of $\{(x,t)|t\geq0\}$.

Recall that $\kappa_{4,jk}=0,\ j\neq k.$  Thus,
\begin{equation}
I_3=\frac{s}{3!n^{1/2}b_n^{3/2}}\sum_{j=1}^n\kappa_{4,j}\mathbb{E}\{D_{jj}^3(s)\Phi_n\},
\end{equation}
and
\begin{equation}
|I_3|\leq \frac{\sqrt{n}}{b_n^{3/2}}C_3(\sqrt{b_n/n}x,t).
\end{equation}
Taking into account that
\begin{equation}
|\varepsilon_3|\leq \frac{\sqrt{n}}{b_n}C_4(\sqrt{b_n/n}x,t), 
\end{equation}
we conclude that $I_2,I_3, \varepsilon \to0$ on any bounded subset of $\{(x,t):t\geq0\}$.
It then follows from (\ref{eqn:An-Bn}) and (\ref{eqn:Rnint}), that $R_n,$ defined in (\ref{eqn:defRn}), converges to 0 as $n\to\infty$.
Theorem \ref{thm:0cumulant} is proven.
\end{proof}
%%%%%%%%%%%%%%%%%%%%

%%%%%%%%%%%%%%%%%%%%%%%%%%%%%%%%%%5
\section{Appendix}
\appendix
\section{Poincar\'e Inequality}
\begin{defn}
A probability measure P on $\mathbb{R}^M$ satisfies the Poincar\'e Inequality (PI) with constant $m>0$ if, for all continuously differentiable 
functions $f$,
\begin{equation}
Var_P(f):=E_P(|f(x)-E_P(f(x))|^2)\leq \frac{1}{m}E_P(|\nabla f|^2).\end{equation}\label{pi}
\end{defn}
We note that the Poincar\'e inequality tensorises and the probability measures satisfying the Poincar\'e inequality have sub-exponential tails
(see e.g. \cite{AGZ}).  In particular, 
if $P$ satisfies the PI on $\mathbb{R}^M$ with constant $m,$
then for any Lipschitz continuous function $G$, and $|t|\leq \sqrt{m}/\sqrt{2}|G|_{\mathscr{L}},$ we have
\begin{equation}E_{P}(e^{t(G-E_P(G))})\leq K,
\end{equation}
with $K=-\sum_{i\geq0}2^i\log(1-2^{-1}4^{-i})$. Consequently, for all $\delta>0$,
\begin{equation}P(|G-E_P(G)|\geq\delta)\leq 2Ke^{-\frac{\sqrt{m}}{\sqrt{2}|G|_{\mathscr{L}}}\delta}.\label{poincarelem}\end{equation}

%%%%%%%%%%%%%%%%%%%%
\section{Decoupling formula} 
\begin{defn}
Let $\xi$ be a random variable such that $\mathbb{E}\{|\xi|^{p+2}\}<\infty$ for a certain nonnegative integer p. Then for any function $f:\mathbb{R}\to 
\mathbb{C}$ of the class $C^{p+1}$ with bounded derivatives $f^{(l)},l=1,...,p+1,$ we have
\begin{equation}
\mathbb{E}\{\xi f(\xi)\}=\sum_{l=0}^p\frac{\kappa_{l+1}}{l!}\mathbb{E}\{f^{(l)}(\xi)\}+\varepsilon_p.
\label{decf}
\end{equation}
where $\kappa_l$ denotes the $l$th cumulant of $\xi$ and the remainder term $\varepsilon_p$ admits the bound
\begin{equation}
|\varepsilon_p|\leq C_p\mathbb{E}\{|\xi|^{p+2}\}\sup_{t\in \mathbb{R}}f^{(p+1)}(t), \ C_p\leq\frac{1+(3+2p)^{p+2}}{(p+1)!}.
\label{epsilon}
\end{equation}
If $\xi$ is a Gaussian random variable with zero mean,
\begin{equation}
\mathbb{E}\{\xi f(\xi)\}=\mathbb{E}\{\xi^2\}\mathbb{E}\{f'(\xi)\}.
\label{dfgaus}
\end{equation}
\end{defn}
%%%%%%%%%%%%%%%%%%%%%%%%%%%%%%%%%%%%%%%%%%%%%%%%%%%
\section{Proof of Proposition \ref{propprop}}

The goal of this section is to derive a bound
\begin{equation}
\sup _{j\neq k}  |\mathbb{E}\{U_{jk}(t)\}|=O(\frac{1+t^6}{b_n}).
\end{equation}
To achieve this, we first bound the mathematical expectation of the off-diagonal entries of the resolvent matrix.  Then, we use the
Helffer-Sj\'{o}strand functional calculus to extend the bound to the off-diagonal entries of the unitary matrix $U(t).$ 

Consider $R(z)=(z-M)^{-1}$, $Im(z)\neq 0$. 
The main part of the proof of proposition is the following lemma.
\begin{lem}
\label{lemma}
Let $|Imz|\leq 2$. Then
\begin{equation}
|\mathbb{E}\{R_{ps}\}| \leq \frac{C}{|\Im z|^5\*b_n},
\end{equation}
where $C>0$ is a constant independent from
$p\not=s$ and $n.$
\end{lem}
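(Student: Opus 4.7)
The natural approach is to derive a self-consistent equation for $\mathbb{E}\{R_{ps}\}$ using the resolvent identity and the cumulant/decoupling expansion from Appendix B, then to solve it perturbatively, using the Poincar\'e inequality to control covariances (exactly the same kind of bookkeeping used throughout Sections 3.2--3.4, but now applied to $R(z)$ rather than to $U(t)$).

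I would start from $(z-M)R(z)=I$, which for $p\neq s$ gives
\begin{equation*}
z\,\mathbb{E}\{R_{ps}\} = \sum_{k:\,d_n(p,k)\leq b_n} \mathbb{E}\{M_{pk}R_{ks}\}
 = \frac{1}{\sqrt{b_n}}\sum_{k:\,d_n(p,k)\leq b_n} \mathbb{E}\{W_{pk}R_{ks}\}.
\end{equation*}
Applying the decoupling formula (\ref{decf}) to each $\mathbb{E}\{W_{pk}R_{ks}\}$, and using $\partial/\partial W_{pk}=b_n^{-1/2}D_{pk}$ together with $D_{pk}R_{ks}=\beta_{pk}(R_{kp}R_{ks}+R_{kk}R_{ps})$ and the identity $(1+\delta_{pk})\beta_{pk}=1$, the $l=1$ contribution yields
\begin{equation*}
\frac{\sigma^2}{b_n}\sum_{k:\,d_n(p,k)\leq b_n}\mathbb{E}\{R_{kp}R_{ks}+R_{kk}R_{ps}\} + (\text{higher-cumulant terms})+(\text{remainder}).
\end{equation*}

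The second sum isolates $\mathbb{E}\{R_{ps}\}$: after writing $\mathbb{E}\{R_{kk}R_{ps}\}=\mathbb{E}\{R_{kk}\}\mathbb{E}\{R_{ps}\}+\mathrm{Cov}(R_{kk},R_{ps})$ one obtains
\begin{equation*}
\Bigl(z-\frac{(2b_n+1)\sigma^2}{b_n}\bar{g}_n(z)\Bigr)\mathbb{E}\{R_{ps}\}
=\frac{\sigma^2}{b_n}\sum_{k}\mathbb{E}\{R_{kp}R_{ks}\}+\mathcal{C}_n(p,s;z)+\mathcal{E}_n(p,s;z),
\end{equation*}
where $\bar{g}_n(z)=n^{-1}\mathbb{E}\{\mathrm{Tr}\,R(z)\}$, the term $\mathcal{C}_n$ collects the covariances and $\mathcal{E}_n$ the $l=2,3$ cumulant contributions plus the decoupling remainder. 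The prefactor on the left converges (by the band semicircle law) to $z-2\sigma^2 g_{sc}(z)=1/g_{sc}(z)$, which is bounded and invertible uniformly for $|\Im z|\leq 2$, with inverse of size $O(|\Im z|^{-1})$. The main right-hand side sum is controlled by Cauchy--Schwarz and the Ward-type bound
\begin{equation*}
\Bigl|\sum_{k}R_{kp}R_{ks}\Bigr|\leq (RR^*)_{pp}^{1/2}(RR^*)_{ss}^{1/2}\leq |\Im z|^{-2},
\end{equation*}
producing a $O(1/(b_n|\Im z|^3))$ contribution to $\mathbb{E}\{R_{ps}\}$. The covariances $\mathrm{Cov}(R_{kk},R_{ps})$ are estimated by Poincar\'e applied to the Lipschitz functions $R_{kk}$ and $R_{ps}$ of the matrix entries, exactly in the spirit of (\ref{pi1}) and (\ref{vukk}): each variance is $O(|\Im z|^{-\alpha}/b_n)$, and a Cauchy--Schwarz in $k$ gives a further gain.

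The main obstacle is the careful bookkeeping of the higher-cumulant terms $\mathcal{E}_n$. Each derivative $D_{pk}$ produces an extra resolvent factor (hence an extra $|\Im z|^{-1}$), and a crude entrywise bound only yields $O(b_n^{-1/2})$ after summing over the band of size $2b_n+1$, which is too weak. The $b_n^{-1}$ saving is recovered by organising the sums as quadratic forms in resolvent entries and applying Ward-type identities $\sum_k|R_{k\cdot}|^2\leq |\Im z|^{-2}$, together with one further round of decoupling on the $\kappa_3$-sum (to convert it into a structure with the same $\sum_k R_{k\cdot}R_{k\cdot}$ Cauchy--Schwarz gain as the leading term). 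All losses in $|\Im z|$ accumulated along the way are absorbed into the claimed exponent $5$, which is not sharp but suffices for the Helffer--Sj\"ostrand integration used to deduce Proposition~\ref{propprop} from this lemma.
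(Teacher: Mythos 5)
Your overall scheme coincides with the paper's proof of this lemma: start from the resolvent identity $z\mathbb{E}\{R_{ps}\}=\sum_{j:(j,p)\in I_n}\mathbb{E}\{M_{pj}R_{js}\}$, apply the decoupling formula, isolate the $\frac{\sigma^2}{b_n}\sum_j\mathbb{E}\{R_{jj}R_{ps}\}$ term, control the quadratic term $\sum_j\mathbb{E}\{R_{jp}R_{js}\}$ by a Ward-type bound, control the covariances $\mathrm{Cov}(R_{jj},R_{ps})$ by the Poincar\'e inequality, and arrive at $\bigl[z-\frac{2b_n+1}{b_n}\sigma^2 g_n(z)\bigr]\mathbb{E}\{R_{ps}\}=O\bigl(\frac{1}{b_n|\Im z|^4}\bigr)$. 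One small remark: your ``further round of decoupling on the $\kappa_3$-sum'' is unnecessary; a single Cauchy--Schwarz over the band index already gives $\bigl|\sum_j\mathbb{E}\{R_{jp}^2R_{js}\}\bigr|\le \sqrt{2b_n+1}\,|\Im z|^{-3}$, which after multiplication by $|\mu_3|b_n^{-3/2}$ is of size $O(1/(b_n|\Im z|^3))$, exactly as in the paper.

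The genuine gap is in how you invert the prefactor. You claim that $z-\frac{(2b_n+1)\sigma^2}{b_n}g_n(z)$ ``converges by the band semicircle law'' to $1/g_{sc}(z)$ and is therefore invertible with inverse $O(|\Im z|^{-1})$ uniformly for $|\Im z|\le 2$. The semicircle law only gives pointwise, non-quantitative convergence at each fixed $z$ with $\Im z\neq 0$; it does not provide a lower bound on the finite-$n$ prefactor that is uniform in $z$ and valid for every $n$, which is what the lemma requires (the constant $C$ must be independent of $n$, and the bound is needed for $|\Im z|$ arbitrarily small, precisely the regime that enters the Helffer--Sj\"ostrand integral with the $|y|^5$ weight). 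The paper closes this step at finite $n$: it derives the companion self-consistent equation $\bigl[z-\frac{2b_n+1}{b_n}\sigma^2 g_n(z)\bigr]g_n(z)=1+O\bigl(\frac{1}{b_n|\Im z|^4}\bigr)$ and argues by dichotomy --- if the error term is at most $1/2$, then $\bigl|z-\frac{2b_n+1}{b_n}\sigma^2 g_n(z)\bigr|\ge \frac{1}{2|g_n(z)|}\ge\frac{|\Im z|}{2}$ (this is where the fifth power of $|\Im z|$ comes from), while otherwise $b_n|\Im z|^4$ is bounded and the trivial estimate $|\mathbb{E}\{R_{ps}\}|\le|\Im z|^{-1}$ is already of the claimed size. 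Your step can also be repaired more simply by a sign argument: since $\Im g_n(z)$ and $\Im z$ have opposite signs, $\bigl|z-\frac{2b_n+1}{b_n}\sigma^2 g_n(z)\bigr|\ge\bigl|\Im z-\frac{2b_n+1}{b_n}\sigma^2\Im g_n(z)\bigr|\ge|\Im z|$ for every $n$ and $z$. Some such finite-$n$, quantitative argument is needed; the limiting semicircle law alone does not supply it.
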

\begin{proof}
We start with the resolvent identity 
\begin{equation}zR(z)=I+M\*R(z).\end{equation}
Therefore, the off-diagonal entries of $R(z)$ satisfy the following equation
\begin{equation}
z\mathbb{E}\{R_{ps}\}=\sum_{j: (j,p)\in I_n}\mathbb{E}\{M_{pj}R_{js}\}, \ p\not=s.
\label{zR_ps}
\end{equation}
Applying the decoupling formula, we obtain
\begin{eqnarray}
\lefteqn{\mathbb{E}\{M_{pj}R_{js}\}=}\nonumber\\
&&\left\{\begin{array}{llll}
\frac{\sigma^2}{b_n}\mathbb{E}\{R_{jp}R_{js}+R_{jj}R_{ps}\}+\frac{\mu_3}{b_n^{3/2}}\mathbb{E}\{2R_{jp}^2R_{js}+2R_{jj}R_{pp}R_{js}+
4R_{jp}R_{jj}R_{ps}\}+\varepsilon_{2,j}&j\neq p\\
\frac{2\sigma^2}{b_n}\mathbb{E}\{R_{pp}R_{ps}\}+\frac{2\mu_{3,p}}{b_n^{3/2}}\mathbb{E}\{R_{pp}^2R_{ps}\}+\varepsilon_{2,p}&j=p,
\end{array}\right.
\end{eqnarray}
where
\begin{equation}
|\varepsilon_{2,j}|\leq \frac{C_2\max\{\kappa_4,\kappa_4'\}}{b_n^2}\sup_{M_{pj}\in\mathbb{R}}|\frac{\partial^3R_{js}}{\partial M_{pj}^3}|
=O(\frac{1}{b_n^2|Imz|^4}).
\end{equation}
We note that
\begin{eqnarray}
& & |\sum_{j: (j,p)\in I_n}\mathbb{E}\{R_{jp}R_{js}\}|\leq\mathbb{E}\{\sqrt{\sum_{|j-p|\leq b_n}|R_{jp}|^2}
\sqrt{\sum_{|j-p|\leq b_n}\mathbb{E}\{|R_{js}|^2}\}\leq\frac{1}{|Imz|^2}, \\
& & |\sum_{j: (j,p)\in I_n} \mathbb{E}\{R_{jp}^2R_{js}\}|\leq\frac{1}{|Imz|^2}\sum_{j:|j-p|\leq b_n}\mathbb{E}\{|R_{js}|\}\leq
\frac{\sqrt{2b_n+1}}{|Imz|^2}\sqrt{\sum_{j:|j-p|\leq b_n}|R_{js}|^2}\leq\frac{\sqrt{2b_n+1}}{|Imz|^3},
\end{eqnarray}
and similarly,
\begin{eqnarray}
& |\sum_{j: (j,p)\in I_n}\mathbb{E}\{R_{jj}R_{pp}R_{js}\}|\leq\frac{\sqrt{2b_n}}{|Imz|^3}, 
& |\sum_{j: (j,p)\in I_n}
\mathbb{E}\{R_{jp}R_{jj}R_{ps}\}|\leq\frac{\sqrt{2b_n}}{|Imz|^3}.
\end{eqnarray}
Thus, for $p\neq s,$
\begin{eqnarray}
z\mathbb{E}\{R_{ps}\}&=&\sum_{j: (j,p)\in I_n}\frac{\sigma^2}{b_n}\mathbb{E}\{R_{jj}R_{ps}\}+O(\frac{1}{b_n|Imz|^2})
+O(\frac{1}{b_n|Imz|^3})+O(\frac{1}{b_n|Imz|^4})\nonumber\\
&=&\sum_{j: (j,p)\in I_n }\frac{\sigma^2}{b_n}\mathbb{E}\{R_{jj}R_{ps}\}+O(\frac{1}{b_n|Imz|^4}).
\end{eqnarray}
Since the diagonal entries $R_{jj}$'s have the same distribution, 
we can write
$g_n(z):=\frac{1}{n}\mathbb{E}\{TrR\}=\mathbb{E}\{R_{jj}\}$. From the Wigner semicircle law for band random matrices, 
\begin{equation}
g_n(z)\to\int_{-2\sqrt{2}\sigma}^{2\sqrt{2}\sigma}\frac{\sqrt{8\sigma^2-x^2}}{4\pi\sigma^2(z-x)}dx.
\end{equation}
We have
\begin{equation}\sum_{j: (j,p)\in I_n}\mathbb{E}\{R_{jj}R_{ps}\}=(2b_n+1)g_n(z)\mathbb{E}\{R_{ps}\}+\sum_{|j-p|\leq b_n}
\mathbb{E}\{R_{jj}^{\circ}R_{ps}^{\circ}\},
\end{equation}
and
\begin{equation}|\sum_{j: (j,p)\in I_n}\mathbb{E}\{R_{jj}^{\circ}R_{ps}^{\circ}\}|\leq(2b_n+1)Var^{1/2}\{R_{11}\}Var^{1/2}\{R_{ps}\}.
\end{equation}
The Poincar\'e inequality implies that
\begin{equation}Var\{R_{ps}\}\leq\frac{1}{mb_n}\sum_{j: (j,p)\in I_n}
\mathbb{E}\{\beta_{jk}^2|R_{pj}R_{ks}+R_{pk}R_{js}|^2\}\leq\frac{2}{mb_n|Imz|^4}.
\end{equation}
Hence,
\begin{equation}z\mathbb{E}\{R_{ps}\}=\frac{\sigma^2(2b_n+1)}{b_n}g_n(z)\mathbb{E}\{R_{ps}\}+O(\frac{1}{|\Im z|^4 \* b_n}),
\end{equation}
which implies
\begin{equation}[z-\frac{2b_n+1}{b_n}\sigma^2g_n(z)]\mathbb{E}\{R_{ps}\}=O(\frac{1}{|\Im z|^4\*b_n}).
\label{ER_ps=O}
\end{equation}
In a similar fashion,
\begin{equation}
\label{bavaria1}
[z-\frac{2b_n+1}{b_n}\sigma^2g_n(z)]g_n(z)=1+O(\frac{1}{|\Im z|^4 \*b_n}).
\end{equation}

If the term  $O(\frac{1}{|\Im z|^4 \*b_n})$ at the r.h.s. of (\ref{bavaria1}) is bounded in absolute value from above by $1/2,$ then
there exists a constant $C_1$ such that
\begin{equation}
\frac{C_1}{b_n|Imz|^4}\leq 1/2.
\end{equation}
Then
\begin{eqnarray}
& & |[z-\frac{2b_n+1}{b_n}\sigma^2g_n(z)]g_n(z)|\geq 1/2, \\
\label{bavaria2}
& & |z-\frac{2b_n+1}{b_n}\sigma^2g_n(z)|\geq \frac{1}{2|g_n(z)|}\geq\frac{|Imz|}{2},
\end{eqnarray}
and (\ref{bavaria2}) and (\ref{ER_ps=O}) imply
\begin{equation}|\mathbb{E}\{R_{ps}\}|=O(\frac{1}{|\Im z|^5\* b_n}).
\end{equation}
Now assume that
\begin{equation}
\frac{C_1}{b_n|Imz|^4}>1/2.
\end{equation}
Then
\begin{equation}
|\mathbb{E}\{R_{ps}\}|\leq\frac{1}{|Imz|}<\frac{2\*C_1}{b_n|Imz|^5}.
\label{resres}
\end{equation}
Lemma \ref{lemma} is proven. \end{proof}

Now, we extend the bound in the last lemma to the off-diagonal entries of $f(M),$ where $f$ is sufficiently smooth function with compact support.
To this end, we use the Helffer-Sj\"ostrand functional calculus (see e.g. \cite{Dav}, \cite{sashapizzo}).
We write
\begin{equation}
\label{bound12}
\mathbb{E}\{f(M)_{jk}\}=-\mathbb{E}\left\{\frac{1}{\pi}\int_{\mathbb{C}}\frac{\partial\tilde{f}}{\partial\bar{z}}R_{jk}dxdy\right\}
=-\frac{1}{\pi}\int_{\mathbb{R}\times [-1,1]}\frac{\partial\tilde{f}}{\partial\bar{z}}O(\frac{1}{|\Im z|^5\* b_n})dxdy,
\end{equation}
where
\begin{itemize}
\item[i)]
 $z=x+iy$ with $x,y \in \mathbb{R}$;
 \item[ii)] $\tilde{f}(z)$ is the extension of the function $f$ defined as
  \begin{equation}\label{a.a. -extension}
  \tilde{f}(z):=\Big(\,\sum_{n=0}^{l}\frac{f^{(n)}(x)(iy)^n}{n!}\,\Big)\sigma(y);
\end{equation}
here $\sigma \in C^{\infty}(\mathbb{R})$ is a nonnegative function equal to $1$ for $|y|\leq 1/2$ and equal to zero for $|y|\geq 1$.
 \end{itemize}
Since
\begin{equation}
\frac{\partial \tilde{f}}{\partial \bar{z}} := \frac{1}{2}\Big(\frac{\partial \tilde{f}}
{\partial x}+i\frac{\partial \tilde{f}}{\partial y}\Big),
\end{equation}
one has (with $l=5$)
\begin{equation}
\frac{\partial\tilde{f}}{\partial\bar{z}}=
\frac{1}{2}\left(\sum_{n=0}^5\frac{f^{(n)}(x)(iy)^n}{n!}\right)i\frac{d\sigma}{dy}+\frac{1}{2}f^{(6)}(x)(iy)^5\frac{\sigma(y)}{5!}.
\end{equation}
In particular,
\begin{equation}\label{estimate-derivative}
\Big|\frac{\partial \tilde{f}}{\partial \bar{z}}\Big| \leq const \* \|f\|_{C_c^6(\R)}\*|y|^5,
\end{equation}
for six times continuously differentiable function $f$ with compact support, where
\begin{equation}
\|f\|_{C_c^6(\R)}=\max_{0\leq k\leq 6} \max_{x \in \R} |f^{(k)}(x)|.
\label{bound17}
\end{equation}

Combining (\ref{resres}), (\ref{bound12}), and (\ref{bound17}), we arrive at
\begin{equation}
\mathbb{E}\{f(M)_{jk}\}=O(\frac{\|f\|_{C_c^6(\R)}}{b_n}).
\end{equation}
This bound is not sufficient for our purposes since $g(x)=e^{itx}$ is not compactly supported. 
Let $f(x)\in C^{\infty}(\mathbb{R})$ be a function satisfying $f(x)\equiv g(x)$ if $x\in[-10\sigma,10\sigma]$, $f(x)=0$ if $|x|>20\sigma$. 
If $Spec(M)\subset[-10\sigma,10\sigma]$, we clearly have $f(M)=g(M)$. Hence,
\begin{equation}
|\mathbb{E}\{g(M)_{jk}\}|\leq|\mathbb{E}\{f(M)_{jk}\}|+\sup_{x\in\mathbb{R}}|g(x)|\mathbb{P}(\|M\|\geq10\sigma).
\label{gexpectation}
\end{equation}
In the next lemma, we show that $\mathbb{P}(\|M\|\geq10\sigma)$ is negligibly small.

\begin{lem}
\label{norma}
There exists a positive constant $C$  such that
\begin{equation}
\label{otsenka10}
\mathbb{P}(\|M\|\geq10\sigma)\leq C \* e^{-C \*\sqrt{b_n}\*\sigma}.
\end{equation}
\end{lem}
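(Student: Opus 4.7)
The strategy is to combine Gaussian-type concentration coming from the Poincar\'e inequality with a moment bound on $\mathbb{E}\|M\|$. First I would verify that $\|M\|$ is a Lipschitz function of the independent entries $\{W_{jk}\}_{(j,k)\in I_n^+}$ with Lipschitz constant at most $\sqrt{2/b_n}$. Indeed, Weyl's inequality gives $|\|M\|-\|M'\||\leq \|M-M'\|_{HS}$, and because each off-diagonal $W_{jk}$ (with $j<k$, $(j,k)\in I_n$) contributes to both entries $M_{jk}$ and $M_{kj}$ with prefactor $b_n^{-1/2}$, while diagonal entries contribute once, a one-line computation yields
\[
\|M-M'\|_{HS}^2\leq \frac{2}{b_n}\sum_{(j,k)\in I_n^+}|W_{jk}-W_{jk}'|^2.
\]
By the tensorization property of the Poincar\'e inequality (used already in (\ref{ppi})), the joint distribution of $\{W_{jk}\}_{(j,k)\in I_n^+}$ satisfies PI with the same constant $m$. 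Applying (\ref{poincarelem}) with $G=\|M\|$ and $|G|_{\mathscr{L}}\leq\sqrt{2/b_n}$ then gives
\[
\mathbb{P}\bigl(|\|M\|-\mathbb{E}\|M\||\geq \delta\bigr)\leq 2K\exp\!\bigl(-\tfrac{1}{2}\sqrt{m\,b_n}\,\delta\bigr),\qquad \delta>0.
\]

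Next, I would show that $\mathbb{E}\|M\|\leq 9\sigma$ for all sufficiently large $n$, so that $\mathbb{P}(\|M\|\geq 10\sigma)\leq \mathbb{P}(\|M\|\geq \mathbb{E}\|M\|+\sigma)$. The natural tool is the trace bound $\|M\|^{2k}\leq \mathrm{tr}(M^{2k})$ together with the band-matrix moment estimate
\[
\mathbb{E}\,\mathrm{tr}(M^{2k})\leq n\cdot (2\sigma^2)^k\,C_k\,(1+o_{n,k}(1)),
\]
which follows from the same combinatorial walk-counting that yields the semicircle law (\cite{semicirclelawband2},\cite{semicirclelawband1}); here $C_k$ is the $k$-th Catalan number. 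Taking $2k$-th roots gives $(\mathbb{E}\|M\|^{2k})^{1/(2k)}\leq n^{1/(2k)}\sqrt{2}\sigma\,C_k^{1/(2k)}(1+o(1))$. Choosing $k=\lfloor \log n\rfloor$, I have $n^{1/(2k)}\to 1$ and $C_k^{1/(2k)}\to 2$, so $\mathbb{E}\|M\|\leq (\mathbb{E}\|M\|^{2k})^{1/(2k)}\leq 2\sqrt{2}\sigma(1+o(1))<9\sigma$ for $n$ large.

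Combining the two bounds with $\delta=\sigma$ gives $\mathbb{P}(\|M\|\geq 10\sigma)\leq 2K\exp(-\tfrac12\sqrt{m\,b_n}\,\sigma)$ for $n$ large, which has the required form; the inequality is then extended to all $n$ by enlarging the constant $C$.

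The main obstacle is the uniform moment bound on $\mathbb{E}\,\mathrm{tr}(M^{2k})$ when $k$ is allowed to grow with $n$ (we need $k\sim\log n$). For fixed $k$ the asymptotic $(2\sigma^2)^k C_k$ is immediate from the proof of the semicircle law, but the regime $k=O(\log n)$ requires controlling the contribution of non-Dyck closed walks carefully: one must verify that the combinatorial overcounting and the sub-exponential tails (which the Poincar\'e inequality provides, as recalled in (\ref{poincarelem})) combine to keep the error term $o(1)$ uniformly in this range of $k$. Everything else in the proof is routine.
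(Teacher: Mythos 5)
Your proposal is correct and follows essentially the same route as the paper: Lipschitz concentration for $\|M\|$ via the (tensorized) Poincar\'e inequality, combined with a high-moment trace bound to show $\mathbb{E}\{\|M\|\}$ stays below the threshold for large $n$. The only difference is in the second step: the paper first symmetrizes, setting $B=M-\tilde M$ with $\tilde M$ an independent copy so that odd moments vanish and the Sinai--Soshnikov trace asymptotics apply directly with $s\to\infty$, $s=o(b_n^{1/3})$, whereas you work with $M$ itself at $k\sim\log n$ (which lies well inside that range since $b_n\gg\sqrt n$) and must therefore control walks with edges repeated an odd number of times; both arguments defer the same standard walk-counting details, so this is a harmless variation.
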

Clearly, (\ref{gexpectation}) and (\ref{norma}) finish the proof of Proposition \ref{propprop}.
Thus, we are left with proving (\ref{otsenka10}).

\begin{proof}
We note that $\|M\|$ is a Lipschitz function of the matrix entries and the distribution of the entries of $M$ satisfies the Poincar\'e 
inequality.  Therefore, we have 
\begin{equation}
\mathbb{P}(|\|M\|-\mathbb{E}\{\|M\|\}|\geq\delta)\leq c_1e^{-c_2\sqrt{b_n}\delta},
\label{normpoin}
\end{equation}
with some positive constants $c_1$ and $c_2.$
Below we show that $\mathbb{E}\{\|M\|\} \leq 5\sigma$ for all sufficiently large $n.$

Let $\tilde{M}$ be an independent copy of $M$.  Using a symmetrization argument (see e.g. \cite{tao}),
we have
\begin{equation}
\mathbb{E}\{\|M -\tilde{M}\|\}\geq \mathbb{E}\{\|M\|\}
\end{equation}
Denote $B=M-\tilde{M}.$
Applying the method of moments (\cite{sashasinai1}, \cite{sashasinai2}), one can show that
\begin{equation}
\label{bound11}
\mathbb{E}\{Tr B^{2s}\}=\frac{(16\sigma^2)^sn}{\sqrt{\pi s^3}}(1+o(1)),
\end{equation}
as $n\to\infty$ provided $s\to\infty$ so that $s=o(b_n^{1/3})$.
The computations are standard and left to the reader.
Then
\begin{equation}
\mathbb{E}\{\|B\|^{2s}\}\leq\frac{(16\sigma^2)^sn}{\sqrt{\pi s^3}}(1+o(1)),
\end{equation}
which implies
\begin{equation}
\mathbb{E}\{\|B\|\}\leq 4\sigma\left[\frac{n}{\sqrt{\pi s^3}}(1+o(1))\right]^{1/2s}.
\end{equation}
Therefore, for sufficiently large $n,$
\begin{equation}
\mathbb{E}\{\|M\|\} \leq \mathbb{E}\{\|B\|\} \leq 5\*\sigma.
\end{equation}
The last inequality and (\ref{normpoin}) finish the proof of Lemma \ref{norma} and Proposition \ref{propprop}.
\end{proof}

%%%%%%%%%%%%%%%%%%%%%%%%%%%%

\bibliographystyle{plain}

%%%%%%%%%%%%%%%%%%%%%%%%%%%%

\end{document}